\documentclass[a4paper,11pt,leqno]{article}

\usepackage[latin1]{inputenc}
\usepackage[T1]{fontenc} 
\usepackage[english]{babel}
\usepackage[notcite,notref]{showkeys}
\usepackage{verbatim}
\usepackage{enumerate}
\usepackage{amsfonts}
\usepackage{amsmath}
\usepackage{amsthm}
\usepackage{amssymb}
\usepackage{color}
\usepackage{graphicx}
\usepackage{bm}

\theoremstyle{definition}
\newtheorem{defin}{Definition}[section]

\newtheorem{rem}[defin]{Remark}

\theoremstyle{plane}
\newtheorem{thm}[defin]{Theorem}
\newtheorem{prop}[defin]{Proposition}
\newtheorem{coroll}[defin]{Corollary}
\newtheorem{lemma}[defin]{Lemma}

\newcommand{\mbb}{\mathbb}

\newcommand{\mc}{\mathcal}
\newcommand{\veps}{\varepsilon}
\newcommand{\what}{\widehat}
\newcommand{\wtilde}{\widetilde}
\newcommand{\vphi}{\varphi}
\newcommand{\oline}{\overline}

\newcommand{\ra}{\rightarrow}

\newcommand{\hra}{\hookrightarrow}

\newcommand{\g}{\gamma}
\newcommand{\lan}{\langle}
\newcommand{\ran}{\rangle}

\newcommand{\R}{\mathbb{R}}
\newcommand{\C}{\mathbb{C}}
\newcommand{\N}{\mathbb{N}}
\newcommand{\Z}{\mathbb{Z}}

\renewcommand{\div}{{\rm div}\,}

\newcommand{\Id}{{\rm Id}\,}

\allowdisplaybreaks

\def\d{\partial}
\def\div{{\rm div}\,}

\textheight = 700pt
\textwidth = 460pt
\voffset = -70pt
\hoffset = -40pt

\title{\textsc{\Large{\textbf{Highly rotating viscous compressible fluids \\ in presence of capillarity
effects}}}}

\author{\textsl{Francesco Fanelli} \vspace{.2cm} \\
{\small \textsc{Univ. Lyon, Universit\'e Claude Bernard Lyon 1}} \vspace{.1cm} \\
{\small  CNRS UMR 5208, \textit{Institut Camille Jordan}} \vspace{.1cm} \\
{\footnotesize 43 blvd. du 11 novembre 1918} \vspace{.1cm} \\
{\footnotesize F-69622 Villeurbanne cedex -- FRANCE} \vspace{.2cm} \\
{\small \ttfamily{fanelli@math.univ-lyon1.fr}} }

\vspace{.3cm}
\date\today

\begin{document}

\maketitle

\subsubsection*{Abstract}
{\small We study here a singular limit problem for a Navier-Stokes-Korteweg system with Coriolis force, in the domain
$\R^2\times\,]0,1[\,$ and for general ill-prepared initial data.
Taking the Mach and the Rossby numbers proportional to a small parameter $\veps\ra0$,
we perform the incompressible and high rotation limits simultaneously; moreover, we consider both the constant and
vanishing capillarity regimes.
In this last case, the limit problem is identified as a $2$-D incompressible Navier-Stokes equation in the variables
orthogonal to the rotation axis; if the capillarity is constant, instead, the limit equation slightly changes, keeping however a
similar structure, due to the presence of an additional surface tension term.
In the vanishing capillarity regime, various rates at which the capillarity coefficient goes to $0$ are considered: in general,
this produces an anisotropic scaling in the system.
The proof of the results is based on suitable applications of the RAGE theorem, combined with microlocal symmetrization arguments.}

\paragraph*{2010 Mathematics Subject Classification:}{\small 35Q35 
(primary); 35B25, 
35B40, 
35P25, 
76U05, 
47A40, 
28A33, 
47A55 
(secondary).}

\paragraph*{Keywords:}{\small Navier-Stokes-Korteweg system; rotating fluids; singular perturbation problem; low Mach number;
low Rossby number; vanishing capillarity.}

\section{Introduction} \label{s:intro}

Let us consider, in space dimension $d=3$, the Navier-Stokes-Korteweg system
$$
\begin{cases}
\d_t\rho\,+\,\div\left(\rho\,u\right)\,=\,0 \\[1ex]
\d_t\left(\rho\,u\right)\,+\,\div\bigl(\rho\,u\otimes u\bigr)\,+\,\nabla P(\rho)\,+\,
e^3\times\rho u\,-\,\div\bigl(\mu(\rho)\,Du\bigr)\,-\,
\kappa\,\rho\,\nabla\bigl(\sigma'(\rho)\,\Delta\sigma(\rho)\bigr)\,=\,0\,,
\end{cases}
$$
which describes the evolution of a compressible viscous fluid under the action both of the surface tension
and of the Coriolis force.

In the previous system, the scalar function $\rho=\rho(t,x)\geq0$ represents the density of the fluid, while $u=u(t,x)\in\R^3$ is
its velocity field. The function $P(\rho)$ is the pressure of the fluid, and throughout this paper we will suppose it to be given
by the Boyle law $P(\rho)=\rho^\g/\g$, for some $1<\g\leq2$ (these conditions will be justified in Section \ref{s:result}).
The positive function $\mu(\rho)$ represents the viscosity coefficient, the parameter $\kappa>0$
is the capillarity coefficient and  the term $\sigma(\rho)\geq0$ takes into account the surface tension. Here, we will always assume
(we will motivate this choice below)
$$
\mu(\rho)\,=\,\nu\,\rho\qquad\qquad\mbox{ and }\qquad\qquad \sigma(\rho)\,=\,\rho\,,
$$
for some fixed number $\nu>0$. Finally, the term
$$
e^3\,\times\,\rho u\,:=\,\bigl(-\rho\,u^2\,,\,\rho\,u^1\,,\,0\bigr)
$$
represents the Coriolis force, which acts on the system due to the rotation of the Earth. Here we have supposed that the rotation axis is
parallel to the $x^3$-axis and constant. Notice that this approximation is valid in regions which are very far from the equatorial zone
and from the poles, and which are not too extended: in general, the dependence of the Coriolis force on the latitude should be taken into
account (see e.g. works \cite{G_2008}, \cite{G-SR_2006} and \cite{G-SR_Mem}). On the other hand, for simplicity we are
neglecting the effects of the centrifugal force.

Now, taking a small parameter $\veps\in\,]0,1]$, we perform the scaling $t\mapsto\veps t$, $u\mapsto\veps u$,
$\mu(\rho)\mapsto\veps\mu(\rho)$ and we set $\kappa=\veps^{2\alpha}$, for some $0\leq\alpha\leq1$.
Then, keeping in mind the assumptions we fixed above, we end up with the system
\begin{equation} \label{intro_eq:NSK+rot}
\begin{cases}
\d_t\rho_\veps+\div\left(\rho_\veps u_\veps\right)\,=\,0 \\[2ex]
\d_t\left(\rho_\veps u_\veps\right)+\div\bigl(\rho_\veps u_\veps\otimes u_\veps\bigr)+ \\[1ex]
\qquad\qquad\qquad +\dfrac{1}{\veps^2}\,\nabla P(\rho_\veps)+
\dfrac{1}{\veps}\,e^3\times\rho_\veps u_\veps-\nu\div\bigl(\rho_\veps Du_\veps\bigr)-
\dfrac{1}{\veps^{2(1-\alpha)}}\,\rho_\veps \nabla\Delta\rho_\veps\,=\,0\,.
\end{cases}
\end{equation}
Notice that the previous scaling corresponds to supposing both the Mach number and the Rossby number to be proportional
to $\veps$ (see e.g. paper \cite{J-L-W}, or Chapter 4 of book \cite{F-N}).
We are interested in studying the asymptotic behavior of weak solutions to the previous system, in the regime of small $\veps$,
namely for $\veps\ra0$. In particular, this means that we are performing the incompressible limit, the high rotation limit and,
when $\alpha>0$, the vanishing capillarity limit simultaneously.

Many are the mathematical contributions to the study of the effects of fast rotation on fluid dynamics, under different assumptions
(about e.g. incompressibility of the fluid, about the domain and the boundary conditions\ldots).
We refer e.g. to book \cite{C-D-G-G} and the references therein for an extensive analysis of this problem for incompressible viscous
fluids. In the context of compressible fluids there are, to our knowledge, few works, dealing with different models:
among others that we are going to present more in detail below, we quote here \cite{B-D-GV} and \cite{G-SR_Mem} as
important contributions.

In the recent paper \cite{F-G-N}, Feireisl, Gallagher and Novotn\'y studied the incompressible and high rotation limits together,
for the $3$-D compressible barotropic Navier-Stokes system with Coriolis force,
in the general instance of ill-prepared initial data. Their asymptotic result relies on the spectral analysis of the singular perturbation operator:
by use of the celebrated RAGE theorem (see e.g. books \cite{R-S_III} and \cite{Cyc-Fr-K-S}), they were able to prove some dispersion
properties due to fast rotation, from which they deduced strong convergence of the velocity fields, and this allowed them to pass to
the limit in the  weak formulation of the system.
In paper \cite{F-G-GV-N} by Feireisl, Gallagher, G\'erard-Varet and Novotn\'y, the effect of the centrifugal force was added to the previous
system. Notice that this term scales as $1/\veps^2$; hence, they got interested in both
the isotropic and multi-scale limit: namely, the Mach number was supposed to be proportional to $\veps^m$, for $m=1$ in the former
instance (as in \cite{F-G-N}), $m>1$ in the latter. Let us just point out that, in the analysis
of the isotropic scaling (i.e. $m=1$), they had to resort
to compensated compactness arguments (used for the first time in \cite{G-SR_2006} in the context of rotating fluids)
in order to pass to the limit in the weak formulation: as a matter of fact, the singular perturbation operator had variable coefficients,
and spectral analysis tools were no more available.
In both previous works \cite{F-G-N} and \cite{F-G-GV-N}, it is proved that the limit system is a $2$-D viscous
quasi-geostrophic equation for the limit density (or, better, for the limit $r$ of the quantities
$r_\veps=\veps^{-1}\left(\rho_\veps-1\right)$), which can be interpreted as a sort of stream-function for the limit divergence-free
velocity field. We remark also that the authors were able to establish stability (and then uniqueness) for the limit equation under
an additional regularity hypothesis on the limit-point of the initial velocity fields.

The fact that the limit equation is two-dimensional is a common feature in the context of
fast rotating fluids (see for instance book \cite{C-D-G-G}):
indeed, it is the expression of a well-known physical phenomenon, the Taylor-Proudman theorem (see e.g. \cite{Ped}).
Namely, in the asymptotic regime, the high rotation tends to stabilize the motion, which becomes constant in the direction
parallel to the rotation axis: the fluid moves along vertical coloumns
(the so called ``Taylor-Proudman coloumns''), and the flow is purely horizontal.

\medbreak
Let us come back now to our problem for the Navier-Stokes-Korteweg system \eqref{intro_eq:NSK+rot}.

We point out that the general Navier-Stokes-Korteweg system, that we introduced at the beginning, has been widely studied
(mostly with no Coriolis force), under various choices of the functions $\mu(\rho)$ and $\sigma(\rho)$: one can refer e.g. to papers
\cite{Carles-Danchin-Saut}, \cite{J} and \cite{Li-Marc}.
In fact, this gives rise to many different models, which are relevant, for instance, in the context of quantum hydrodynamics.

For the previous system supplemented with our special assumptions and with no rotation term, in \cite{B-D-L} Bresch,
Desjardins and Lin proved the existence of global in time ``weak'' solutions. Actually, they had to resort to a modified notion
of weak solution: indeed, any information on the velocity field $u$ and its gradient is lost when the density
vanishes; on the other hand, lower bounds for $\rho$ seem not to be available in the
context of weak solutions. This makes it impossible to pass to the limit in the non-linear terms when constructing  a solution
to  the system. The authors overcame such an obstruction choosing test functions whose support is concentrated on the set of
positive density: namely, one has to evaluate the momentum equation not on a classical test function $\vphi$, but rather
on $\rho\,\vphi$, and this leads to a slightly different weak formulation of the system (see also Definition \ref{d:weak} below).
This modified formulation is made possible exploiting additional regularity for $\rho$, which is provided
by the capillarity term: in fact, a fundamental issue in the analysis of \cite{B-D-L} was the proof of the conservation
of a second energy (besides the classical one) for this system, the so called \emph{BD entropy},
which allows to control higher order space derivatives of the density term.

Still using this special energy conservation, in \cite{B-D_2003} Bresch and Desjardins were able to prove existence of global in time
weak solutions (in the classical sense) for a $2$-D viscous shallow water model in a periodic box
(we refer also to \cite{B-D_2006} for the explicit construction of the sequence of approximate solutions).
The system they considered there is very similar to the previous one, but it presents two
additional friction terms: a laminar friction and a turbolent friction. The latter plays a similar role to the capillarity
(one does not need both to prove compactness properties for the sequence of smooth solutions), while the former gives integrability
properties on the velocity field $u$: this is why they did not need to deal with the modified notion of weak solutions.
In the same work \cite{B-D_2003}, the authors were able to consider also the fast rotation limit in the instance of well-prepared initial
data, and to prove the convergence to the viscous quasi-geostrophic equations (as mentioned above for papers
\cite{F-G-N} and \cite{F-G-GV-N}). Let us point out that their argument in passing to the limit relies on the use of the modulated
energy method.

The recent paper \cite{J-L-W}, by J\"ungel, Lin and Wu, deals with a very similar problem: namely, incompressible and high rotation limit
in the two dimensional torus $\mbb{T}^2$ for well-prepared initial data, but combined also with a vanishing capillarity limit
(more specifically, like in system \eqref{intro_eq:NSK+rot}, with $0<\alpha<1$).
On the one hand, the authors were able to treat more general forms of the Navier-Stokes-Korteweg system, with different
functions $\mu(\rho)$ and $\sigma(\rho)$; on the other hand, for doing this they had to work in the framework of
(local in time) strong solutions. Again by use of the modulated energy method, they proved the convergence of the previous system to the
viscous quasi-geostrophic equation: as a matter of fact, due to the vanishing capillarity regime, no surface tension terms enter
into the singular perturbation operator, and the limit system is the same as in works \cite{B-D_2003}, \cite{F-G-N} and \cite{F-G-GV-N}.

\medbreak
In the present paper, we consider system \eqref{intro_eq:NSK+rot} in the infinite slab $\Omega\,=\,\R^2\times\,]0,1[\,$,
supplemented with complete slip boundary conditions, and with ill-prepared initial data.
Our goal is to study the asymptotic behavior of weak solutions in the regime of low Mach number and low Rossby number, possibly combining
these effects with the vanishing capillarity limit.

We stress here the following facts. First of all, we do not deal with general viscosity and surface tension functions:
we fix both $\mu(\rho)$ and $\sigma(\rho)$ as specified above. Second point: we consider a $3$-D domain, but we impose
complete slip boundary conditions, in order to avoid boundary layers effects, which we will not treat in this paper. Finally, our analysis relies on the techniques
used in \cite{F-G-N}, and this allows us to deal with general ill-prepared initial data, i.e. initial densities
$\bigl(\rho_{0,\veps}\bigr)_\veps$ (of the form $\rho_{0,\veps}=1+\veps r_{0,\veps}$) and initial velocities $\bigl(u_{0,\veps}\bigr)_\veps$,
both bounded in suitable spaces, which do not necessarily belong to the kernel of the singular perturbation operator.

For any fixed $\veps\in\,]0,1]$, the existence of global in time ``weak'' solutions to our system can be proved in the same way
as in \cite{B-D-L}  (see the discussion in Subsection \ref{ss:weak} for more
details). As a matter of fact, energy methods still work, due to the skew-symmetry of the Coriolis operator; moreover, a
control of the rotation term (but not uniformly in $\veps$) is easy to get in the BD entropy estimates: this guarantees additional
regularity for the density, and the possibility to prove convergence of the sequence of smooth solutions to a weak one.
However, a uniform control on the higher order derivatives of the densities is fundamental in our study,
in order to pass to the limit for $\veps\ra0$. It can be obtained (see Subsection \ref{ss:unif-bounds})
arguing like in the proof of the BD entropy estimates, but showing a uniform bound for the rotation term. This is the first
delicate point of our analysis: the problem comes from the fact that we have no control on the velocity fields $u_\veps$.
For the same reason, we resort to the notion of ``weak'' solution developed by Bresch, Desjardins and Lin (see Definition \ref{d:weak});
notice that this leads to handle new non-linear terms arising in the modified weak formulation of the momentum equation.

Let us spend now a few words on the limit system (see the explicit expression in Theorems \ref{th:sing-lim} and \ref{th:alpha=0}),
which will be studied in Subsection \ref{ss:constraint}.
Both for vanishing and constant capillarity regimes, we still find that the limit
velocity field $u$ is divergence-free and horizontal and depends just on the horizontal variables
(in accordance with the Taylor-Proudman theorem); moreover, a relation links $u$ to the limit density $r$, which can still be seen as
a sort of stream-function for $u$. In the instance of constant capillarity, this relation slightly changes, giving rise to a
more complicated equation for $r$ (compare equations \eqref{eq:q-geo_1} and \eqref{eq:q-geo_0} below):
indeed, a surface tension term enters into play in the singular perturbation operator, and hence in the limit equation.

We first focus on the vanishing capillarity case, and more precisely on the choice $\alpha=1$ (treated in Section \ref{s:low-cap}).
Formally, the situation is analogous to the one of paper
\cite{F-G-N}, and the analysis of Feireisl, Gallagher and Novotn\'y still applies, after handling some technical
points in order to adapt their arguments to the modified weak formulation.
The main issue is the analysis of acoustic waves: also in this case, we can apply the RAGE theorem to prove
dispersion of the components which are in the subspace orthogonal to the kernel of the singular perturbation operator.
This is the key point in order to pass to the limit in the non-linear terms, and to get convergence. In the end we find
(as in \cite{F-G-N}) that the weak solutions of our system tend to a weak solution of a $2$-D quasi-geostrophic equation
(see \eqref{eq:q-geo_1} below). 

Let us now turn our attention to the case of constant capillarity, i.e. $\alpha=0$ (see Section \ref{s:alpha=0}). Here,
the capillarity term scales as $1/\veps^2$, so it is of the same order as the pressure and the rotation operator. As a consequence,
the singular perturbation operator (say) $\mc{A}_0$ presents an additional term, and it is no more
skew-adjoint with respect to the usual $L^2$ scalar product. However, on the one hand $\mc{A}_0$ has still constant coefficients,
so that spectral analysis is well-adapted: direct computations show that the point spectrum still coincides with the kernel of
the wave propagator, as in the previous case $\alpha=1$. On the other hand, passing in Fourier variables it is easy to find a microlocal
symmetrizer (in the sense of M\'etivier, see \cite{M-2008}) for our system, i.e. a scalar product with respect to
which $\mc{A}_0$ is still skew-adjoint: this allows us to apply again the RAGE theorem with respect
to the new scalar product, and to recover the convergence result by the same techniques as above.
We remark that we use here the additional regularity for the density (the new inner product involves two space derivatives
for $r$).
We also point out that, for $\alpha=0$, the limit equation becomes \eqref{eq:q-geo_0}, which presents a similar structure
to the one of \eqref{eq:q-geo_1}, but where new terms appear.

The case $0<\alpha<1$ (see Section \ref{s:general_a}) is technically more complicated, because this choice
introduces an anisotropy of scaling in the system for acoustic waves. We will need to treat this anisotropy
as a perturbation term in the acoustic propagator: also in this case we can resort to spectral analysis methods, and we can symmetrize
our system, but now both the acoustic propagator and the microlocal symmetrizer depend on $\veps$, via the perturbation term.
So, we first need to prove a RAGE-type theorem (see Theorem \ref{th:RAGE_eps}) for families of operators and symmetrizers:
the main efforts in the analysis are devoted to this. Then, the proof of the convergence can be performed exactly as in the previous
cases: again, the limit system is identified as the $2$-D quasi-geostrophic equation \eqref{eq:q-geo_1}.

\medbreak
Before going on, let us present the organization of the paper.

In the next section we collect our assumptions, and we state our main results. Besides, we recall 
the definition of weak solution we adopt in the sequel and we spend a few words about the existence theory.
Some requirements imposed in this definition are justified by a priori bounds, which we show in Section \ref{s:bounds}; there,
we also identify the weak limits $u$ and $r$ and establish some of their properties.
Section \ref{s:low-cap} is devoted to the proof of the result for $\alpha=1$, while in Section \ref{s:alpha=0}
we deal with the case $\alpha=0$. The anisotropic scaling $0<\alpha<1$ is treated in Section \ref{s:general_a}.
Finally, we collect in the appendix some auxiliary results from Littlewood-Paley theory.

\subsubsection*{Acknowledgements}

The author is deeply grateful to I. Gallagher for proposing him the problem and for enlightening discussions about it.
The most of the work was completed while he was a post-doc at Institut de Math\'ematiques de Jussieu-Paris Rive Gauche,
Universit\'e Paris-Diderot: he wish to express his gratitude also to these institutions.

The author wish to thanks also the anonimous referees for their careful reading and relevant remarks, which greatly helped him to improve
the final version of the paper.

The author was partially supported by the project ``Instabilities in Hydrodynamics'', funded by the Paris city
hall (program ``\'Emergence'') and the Fondation Sciences Math\'ematiques de Paris.
He is also member of the Gruppo Nazionale per l'Analisi Matematica, la Probabilit\`a
e le loro Applicazioni (GNAMPA) of the Istituto Nazionale di Alta Matematica (INdAM).

\section{Main hypotheses and results} \label{s:result}

This section is devoted to present our main results. First of all, we collect our working assumptions. Then, we give the definition
of weak solutions we will adopt throughout all the paper, and which is based on the one of \cite{B-D-L}.
Finally, we state our main theorems about the asymptotic limit.

\subsection{Working setting and main assumptions} \label{ss:hypotheses}

We fix the infinite slab
$$
\Omega\,=\,\R^2\,\times\,\,]0,1[
$$
and we consider in $\R_+\times\Omega$ the scaled Navier-Stokes-Korteweg system
\begin{equation} \label{eq:NSK+rot}
\begin{cases}
\d_t\rho+\div\left(\rho u\right)\,=\,0 \\[1ex]
\d_t\left(\rho u\right)+\div\bigl(\rho u\otimes u\bigr)+\dfrac{1}{\veps^2}\,\nabla P(\rho)+
\dfrac{1}{\veps}\,e^3\times\rho u-\nu\div\bigl(\rho Du\bigr)-
\dfrac{1}{\veps^{2(1-\alpha)}}\,\rho \nabla\Delta\rho\,=\,0\,,
\end{cases}
\end{equation}
where $\nu>0$ denotes the viscosity of the fluid, $D$ is the viscous stress tensor defined by
$$
Du\,:=\,\frac{1}{2}\,\bigl(\nabla u\,+\,^t\nabla u\bigr)\,,
$$
$e^3=(0,0,1)$ is the unit vector directed along the $x^3$-coordinate, and $0\leq\alpha\leq1$ is a fixed parameter.
Taking different values of $\alpha$, we are interested in performing a low capillarity limit (for $0<\alpha\leq1$),
with capillarity coefficient proportional to $\veps^{2\alpha}$, or in leaving the capillarity constant
(i.e. choosing $\alpha=0$).

We supplement system \eqref{eq:NSK+rot} by complete slip boundary conditions for $u$ and Neumann boundary conditions for $\rho$: this allows us to avoid
boundary layers effects, which go beyond the scopes of the present paper and will not be discussed here. If we denote by $n$
the unitary outward normal to the boundary $\d\Omega$ of the domain (simply, $\d\Omega=\{x^3=0\}\cup\{x^3=1\}$), we impose
\begin{equation} \label{eq:bc}
\left(u\cdot n\right)_{|\d\Omega}\,=\,u^3_{|\d\Omega}\,=\,0\,,\qquad
\left(\nabla\rho\cdot n\right)_{|\d\Omega}\,=\,\d_3\rho_{|\d\Omega}\,=\,0\,,\qquad
\bigl((Du)n\times n\bigr)_{|\d\Omega}\,=\,0\,.
\end{equation}

In the previous system \eqref{eq:NSK+rot}, the scalar function $\rho\geq0$ represents the density of
the fluid, $u\in\R^3$ its velocity field, and $P(\rho)$ its pressure, given by the $\g$-law
\begin{equation} \label{eq:def_P}
P(\rho)\,:=\,\frac{1}{\g}\,\rho^\g\,,\qquad\qquad\mbox{ for some }\quad1\,<\,\g\,\leq\,2\,.
\end{equation}
The requirement $\g\leq2$ is motivated by the fact that we need, roughly speaking, $\rho-1$ of finite energy, both for the existence theory and the asymptotic limit.
In fact, by point (i) of Lemma \ref{l:density} in the Appendix, when $\g>2$ we miss the property $\rho-1\,\in\,L^\infty_T(L^2)$ (recall that
the pressure term gives informations on the low frequencies of the density). 
About the existence theory, notice that no problems of this type appeared in \cite{B-D-L}, since the considered domain was bounded; concerning the singular limit analysis,
we refer to estimate \eqref{est:rho_L^inf-L^2} and Remark \ref{r:rho_L^2}.

\begin{rem} \label{r:period-bc}
Let us point out here that equations \eqref{eq:NSK+rot}, supplemented by boundary conditions \eqref{eq:bc},
can be recast as a periodic problem with respect to the vertical variable, in the new domain
$$
\Omega\,=\,\R^2\,\times\,\mbb{T}^1\,,\qquad\qquad\mbox{ with }\qquad\mbb{T}^1\,:=\,[-1,1]/\sim\,,
$$
where $\sim$ denotes the equivalence relation which identifies $-1$ and $1$. As a matter of fact,
it is enough to extend $\rho$ and $u^h$ as even functions with respect to $x^3$, and
$u^3$ as an odd function: the equations are invariant under such a transformation.

In what follows, we will always assume that such modifications have been performed on the initial data, and
that the respective solutions keep the same symmetry properties.
\end{rem}

Here we will consider the general instance of \emph{ill-prepared} initial data
$\bigl(\rho,u\bigr)_{|t=0}=\bigl(\rho_{0,\veps},u_{0,\veps}\bigr)$. Namely, we will suppose
the following on the family $\bigl(\rho_{0,\veps}\,,\,u_{0,\veps}\bigr)_{\veps>0}$:
\begin{itemize}
\item[(i)] $\rho_{0,\veps}\,=\,1\,+\,\veps\,r_{0,\veps}$, with
$\bigl(r_{0,\veps}\bigr)_\veps\,\subset\,H^1(\Omega)\cap L^\infty(\Omega)$ bounded;
\item[(ii)] $\bigl(u_{0,\veps}\bigr)_\veps\,\subset\,L^2(\Omega)$ bounded.
\end{itemize}
Up to extraction of a subsequence, we can suppose that
\begin{equation} \label{eq:conv-initial}
r_{0,\veps}\,\rightharpoonup\,r_0\quad\mbox{ in }\;H^1(\Omega)\qquad\qquad\mbox{ and }\qquad\qquad
u_{0,\veps}\,\rightharpoonup\,u_0\quad\mbox{ in }\;L^2(\Omega)\,,
\end{equation}
where we denoted by $\rightharpoonup$ the weak convergence in the respective spaces.

\subsection{Weak solutions} \label{ss:weak}

In the present paragraph, we define the notion of weak solution for our system: it is based on the one given in \cite{B-D-L}. Essentially,
we need to localize the equations on sets where $\rho$ is positive (see also the discussion in the Introduction):  this is achieved, formally,
by testing the momentum equation on functions of the form $\rho\psi$, where $\psi\in\mc{D}$ is a classical test function.

First of all, we introduce the internal energy, i.e. the scalar function $h=h(\rho)$ such that
$$
h''(\rho)\,=\,\frac{P'(\rho)}{\rho}\,=\,\rho^{\g-2}\qquad\qquad\mbox{ and }\qquad\qquad
h(1)\,=\,h'(1)\,=\,0\,;
$$
let us define then the energies
\begin{eqnarray}
E_\veps[\rho,u](t) & := & \int_\Omega\left(\dfrac{1}{\veps^2}\,h(\rho)\,+\,\dfrac{1}{2}\,\rho\,|u|^2\,+\,
\dfrac{1}{2\,\veps^{2(1-\alpha)}}\,|\nabla\rho|^2\right)dx \label{eq:def_E} \\
F_\veps[\rho](t) & := & 
\frac{\nu^2}{2}\int_\Omega\rho\,|\nabla\log\rho|^2\,dx\;=\;
2\,\nu^2\int_\Omega\left|\nabla\sqrt{\rho}\right|^2\,dx\,.\label{eq:def_F}
\end{eqnarray}
We will denote by $E_\veps[\rho_0,u_0]\,\equiv\,E_\veps[\rho,u](0)$ and by
$F_\veps[\rho_0]\,\equiv\,F_\veps[\rho](0)$ the same quantities, when computed on the initial data
$\bigl(\rho_0,u_0\bigr)$.

Here we present the definition. The integrability properties required in points (i)-(ii)-(v)
will be justified by the computations of Subsections \ref{ss:energies} and \ref{ss:unif-bounds}.

\begin{defin} \label{d:weak}
Fix $(\rho_0,u_0)$ such that $E_\veps[\rho_0,u_0]+F_\veps[\rho_{0}]\,<\,+\infty$.

We say that $\bigl(\rho,u\bigr)$ is a \emph{weak solution} to system \eqref{eq:NSK+rot}-\eqref{eq:bc}
in $[0,T[\,\times\Omega$ (for some $T>0$) with initial data $(\rho_0,u_0)$ if the following conditions are fulfilled:
\begin{itemize}
 \item[(i)] $\rho\geq0$ almost everywhere, $\rho-1\in L^\infty\bigl([0,T[\,;L^\g(\Omega)\bigr)$, $\nabla\rho$ and
$\nabla\sqrt{\rho}\;\in L^\infty\bigl([0,T[\,;L^2(\Omega)\bigr)$ and $\nabla^2\rho\in L^2\bigl([0,T[\,;L^2(\Omega)\bigr)$;
\item[(ii)] $\sqrt{\rho}\,u\,\in L^\infty\bigl([0,T[\,;L^2(\Omega)\bigr)$ and $\sqrt{\rho}\,Du\,\in L^2\bigl([0,T[\,;L^2(\Omega)\bigr)$;
\item[(iii)] the mass equation is satisfied in the weak sense: for any $\phi\in\mc{D}\bigl([0,T[\,\times\Omega\bigr)$ one has
\begin{equation} \label{eq:weak-mass}
-\int^T_0\int_\Omega\biggl(\rho\,\d_t\phi\,+\,\rho\,u\,\cdot\,\nabla\phi\biggr)\,dx\,dt\,=\,\int_\Omega\rho_0\,\phi(0)\,dx\,;
\end{equation}
\item[(iv)] the momentum equation is verified in the following sense: for any $\psi\in\mc{D}\bigl([0,T[\times\Omega\bigr)$ one has
\begin{eqnarray}
& & \hspace{-0.5cm}
\int_\Omega\rho_0^2u_0\cdot\psi(0)\,dx\,=\,\int^T_0\!\!\int_\Omega\biggl(-\rho^2u\cdot\d_t\psi\,-\,\rho u\otimes\rho u:\nabla\psi\,+\,
\rho^2\left(u\cdot\psi\right)\div u\,- \label{eq:weak-momentum} \\
& & \qquad -\,\frac{\g}{\veps^2(\g+1)}\,P(\rho)\rho\,\div\psi\,+\,\frac{1}{\veps}\,e^3\times\rho^2u\cdot\psi\,+\,
\nu\rho Du:\rho\nabla\psi\,+ \nonumber \\
& & \qquad\qquad
\,+\,\nu\rho Du:\left(\psi\otimes\nabla\rho\right)\,+\,\frac{1}{\veps^{2(1-\alpha)}}\,\rho^2\Delta\rho\,\div\psi\,+\,
\frac{2}{\veps^{2(1-\alpha)}}\,\rho\Delta\rho\nabla\rho\cdot\psi\biggr)\,dx\,dt\,; \nonumber
\end{eqnarray}
\item[(v)] for almost every $t\in\,]0,T[\,$, the following energy inequalities hold true:
\begin{eqnarray*} 
E_\veps[\rho,u](t)\,+\,\nu\int^t_0\int_\Omega\rho\,\left|Du\right|^2\,dx\,d\tau & \leq & E_\veps[\rho_0,u_0] \\ 
F_\veps[\rho](t)\,+\,\dfrac{\nu}{\veps^2}\int^t_0\int_\Omega P'(\rho)\,|\nabla\sqrt{\rho}|^2\,dx\,d\tau\,+\,
\dfrac{\nu}{\veps^{2(1-\alpha)}}\int^t_0\int_\Omega\left|\nabla^2\rho\right|^2\,dx\,d\tau & \leq & C\,(1+T)\,,
\end{eqnarray*}
for some constant $C$ depending just on $\bigl(E_\veps[\rho_{0},u_0],F_\veps[\rho_{0}],\nu\bigr)$.
\end{itemize}
\end{defin}

\begin{rem} \label{r:en-bounds}
Notice that, under our hypotheses (recall points (i)-(ii) in Subsection \ref{ss:hypotheses}),
the energies of the initial data are uniformly bounded with respect to $\veps$:
$$
E_\veps[\rho_{0,\veps},u_{0,\veps}]\,+\,F_\veps[\rho_{0,\veps}]\,\leq\,K_0\,,
$$
where the constant $K_0>0$ is independent of $\veps$.
\end{rem}

Let us spend a few words on existence of weak solutions to our equations.

\paragraph{Existence of weak solutions.}

We sketch here the proof of existence of global in time weak solutions to our system, in the sense specified by Definition \ref{d:weak}.
The main point is the construction of a sequence of smooth approximate solutions, which respect the energy and BD entropy estimates we will establish in
Propositions \ref{p:E}, \ref{p:F} and \ref{p:F-unif}. This can be done arguing as in paper \cite{B-D_2006} by Bresch and Desjardins (see also \cite{Mu-Po-Za}
and \cite{Mu-Po-Za_2015}): namely we have to introduce regularizing terms, both for $u$ and $\rho$, in the momentum equations, depending on small parameters
that we will let vanish in a second moment.

Before presenting the details, let us remark that, in \cite{B-D_2006}, the construction is performed for a shallow water system with two additional drag terms:
a laminar friction and a turbulent friction. As remarked there, the former item is needed to have integrability properties for the velocity field,
while the latter plays a role analogous to capillarity in stability analysis, and one do not need both of them to recover existence of weak solutions.
In any case, precisely due to the laminar friction (which could be also replaced by a so-called ``cold pressure'' term), one can get back to the classical
notion of weak solutions; on the contrary, in our case, since we miss integrability properties
for $u$, we have to resort to the modified notion of Definition \ref{d:weak}, the problem relying in proving compactness of the sequence of approximate solutions.

We also point out that this construction works in the simplest case when the density dependent viscosity coefficient and the surface tension term are
taken linear in $\rho$. For the general equations written at the beginning of the Introduction, the explicit construction of smooth
approximate solutions, which respect the BD entropy structure of the system, is much more complicated, and deep advances have been obtained just recently.
In the matter of this, we refer e.g. to works \cite{B-D-Zat}, \cite{Li-Xin}, \cite{V-Yu} and \cite{V-Yu_2}.

\medbreak
Let us come back to the explicit construction. The parameter $\veps>0$ is kept fixed at this level: for any $\veps$, we want to construct a weak solution
$(\rho_\veps,u_\veps)$ to system \eqref{eq:NSK+rot}. Let us take two additional small parameters $\delta>0$ and $\eta>0$, and let us consider the system
\begin{equation} \label{eq:NSK_app}
\begin{cases}
\d_t\rho+\div\left(\rho u\right)\,=\,0 \\[1ex]
\d_t\left(\rho u\right)+\div\bigl(\rho u\otimes u\bigr)+\dfrac{1}{\veps^2}\,\nabla P(\rho)+
\dfrac{1}{\veps}\,e^3\times\rho u- \\[1ex]
\qquad\qquad\qquad
-\nu\div\bigl(\rho Du\bigr)-\dfrac{1}{\veps^{2(1-\alpha)}}\,\rho \nabla\Delta\rho\,=\,
\delta\rho\nabla\Delta^{2s+1}\rho-\delta\nabla\pi(\rho)-\eta\Delta^2u\,.
\end{cases}
\end{equation}
The hypercapillarity $\rho\nabla\Delta^{2s+1}\rho$ (for some $s>2$ large enough) is needed to smooth out the density function; the artificial ``cold pressure''
$\pi(\rho)=-\rho^{-3}$ serves to keep it bounded away from $0$. Finally, the hyperviscosity term $\Delta^2u$ helps to regularize the velocity field.

Therefore, for any fixed positive $\delta$ and $\eta$, system \eqref{eq:NSK_app} becomes parabolic in $u$, and then it admits a unique global solution
$(\rho_{\veps,\delta,\eta}\,,\,u_{\veps,\delta,\eta})$.
Furthermore, as observed in \cite{B-D_2006}, the new terms are compatible with the mathematical structure of the original equations, so that one can find uniform
(with respect to $(\delta,\eta)$ parameters) energy and BD entropy estimates. For the explicit estimates and a proof of them, we refer to Subsections
\ref{ss:energies} and \ref{ss:unif-bounds}.

Now, the first step is to pass to the limit for $\eta\ra0$: this can be done as in Subsection 2.3 of \cite{B-D_2006}. Notice that we can still recover
compactness of the velocity fields thanks to the uniform (in $\eta$ but not in $\delta$) upper and lower bounds for $\rho$. In this way, we get a solution
$(\rho_{\veps,\delta}\,,\,u_{\veps,\delta})$ to equations \eqref{eq:NSK_app} supplemented with the choice $\eta=0$.
Then, the final step consists in passing to the limit also for $\delta\ra0$: as remarked above, we have to work with the modified weak formulation of
Definition \ref{d:weak}, for which the stability analysis performed in \cite{B-D-L} applies.
We refer to \cite{B-D_2006} and \cite{Mu-Po-Za} for more details.

\medbreak
In the end we have proved that, under the hypotheses fixed in Subsection \ref{ss:hypotheses}, for any initial datum $\bigl(\rho_{0,\veps},u_{0,\veps}\bigr)$
there exists a global weak solution $\bigl(\rho_\veps,u_\veps\bigr)$ to problem \eqref{eq:NSK+rot}-\eqref{eq:bc} in $\R_+\times\Omega$,
in the sense specified by Definition \ref{d:weak} above.

\subsection{Main results} \label{ss:results}

We are interested in studying the asymptotic behaviour of a family of weak solutions $\bigl(\rho_\veps\,,\,u_\veps\bigr)_\veps$
to system \eqref{eq:NSK+rot} for the parameter $\veps\ra0$.
As we will see in a while (see Theorems \ref{th:sing-lim} and \ref{th:alpha=0}), one of the main features is that the limit-flow
will be \emph{two-dimensional} and \emph{horizontal} along the plane orthogonal to the rotation axis.

Then, let us introduce some notations to describe better this phenomenon. We will always decompose $x\in\Omega$
into $x=(x^h,x^3)$, with $x^h\in\R^2$ denoting its horizontal component. Analogously,
for a vector-field $v=(v^1,v^2,v^3)\in\R^3$ we set $v^h=(v^1,v^2)$, and we define the differential operators
$\nabla_h$ and $\div_{\!h}$ as the usual operators, but acting just with respect to $x^h$.
Finally, we define the operator $\nabla^\perp_h\,:=\,\bigl(-\d_2\,,\,\d_1\bigr)$.

We can now state our main result in the vanishing capillarity case. The particular choice of the pressure law, i.e. $\g=2$,
will be commented in Remark \ref{r:rho_L^2}.
\begin{thm} \label{th:sing-lim}
Let us take $0<\alpha\leq1$ in \eqref{eq:NSK+rot} and $\g=2$ in \eqref{eq:def_P}.

Let $\bigl(\rho_\veps\,,\,u_\veps\bigr)_\veps$ be a family of weak solutions to system
\eqref{eq:NSK+rot}-\eqref{eq:bc} in $[0,T]\times\Omega$, in the sense of Definition \ref{d:weak}, related to initial data
$\bigl(\rho_{0,\veps},u_{0,\veps}\bigr)_\veps$ satisfying the hypotheses ${\rm (i)-(ii)}$ and \eqref{eq:conv-initial}, and the symmetry
assumptions of Remark \ref{r:period-bc}.
Let us define the scalar quantity $r_\veps\,:=\,\veps^{-1}\left(\rho_\veps-1\right)$.

Then, up to the extraction of a subsequence, one has the following properties:
\begin{itemize}
 \item[(a)] $r_\veps\,\rightharpoonup\,r$ in $L^\infty\bigl([0,T];L^2(\Omega)\bigr)\,\cap\,L^2\bigl([0,T];H^1(\Omega)\bigr)$;
 \item[(b)] $\sqrt{\rho_\veps}\,u_\veps\,\rightharpoonup\,u$ in $L^\infty\bigl([0,T];L^2(\Omega)\bigr)$ and
 $\sqrt{\rho_\veps}\,Du_\veps\,\rightharpoonup\,Du$ in $L^2\bigl([0,T];L^2(\Omega)\bigr)$;
 \item[(c)] $r_\veps\,\ra\,r$ and $\rho_\veps^{3/2}\,u_\veps\,\ra\,u$ (strong convergence) in $L^2\bigl([0,T];L^2_{loc}(\Omega)\bigr)$,
\end{itemize}
where $r=r(x^h)$ and $u=\bigl(u^h(x^h),0\bigr)$ are linked by the relation $u^h\,=\,\nabla^\perp_hr$. Moreover,
$r$ satisfies (in the weak sense) the quasi-geostrophic type equation
\begin{equation} \label{eq:q-geo_1}
\d_t\bigl(r\,-\,\Delta_hr\bigr)\,+\,\nabla^\perp_hr\,\cdot\,\nabla_h\Delta_hr\,+\,\frac{\nu}{2}\,\Delta_h^2r\,=\,0
\end{equation}
supplemented with the initial condition $r_{|t=0}\,=\,\oline{r}_0$, where $\oline{r}_0\,\in\,H^1(\R^2)$ is the unique solution of
$$
\bigl(\Id-\Delta_h\bigr)\,\oline{r}_0\,=\,\int_0^1\bigl(\omega^3_0\,+\,r_0\bigr)\,dx^3\,,
$$
with $r_0$ and $u_0$ which are defined in \eqref{eq:conv-initial}, $\omega_0\,=\,\nabla\times u_0$ which is the vorticity of $u_0$ and
$\omega_0^3$ its third component.
\end{thm}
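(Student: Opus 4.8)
The plan is to adapt the spectral-analysis/RAGE strategy of Feireisl, Gallagher and Novotn\'y \cite{F-G-N}, with two essential modifications: the weak formulation used throughout is the one of Bresch, Desjardins and Lin (test functions of the form $\rho_\veps\,\psi$), and, when $0<\alpha<1$, the singular perturbation operator carries an anisotropic $\veps$-dependent term that has to be absorbed into the acoustic propagator. First I would collect the uniform bounds (Section \ref{s:bounds}): the energy inequality gives $(\sqrt{\rho_\veps}\,u_\veps)_\veps$ bounded in $L^\infty_TL^2$, $(\sqrt{\rho_\veps}\,Du_\veps)_\veps$ in $L^2_TL^2$, $(r_\veps)_\veps$ in $L^\infty_TL^2$ --- here the choice $\g=2$ is crucial, see Remark \ref{r:rho_L^2} --- and $(\veps^\alpha\nabla r_\veps)_\veps$ in $L^\infty_TL^2$; the BD entropy estimate then upgrades this to $(\nabla r_\veps)_\veps$ bounded in $L^2_TL^2$. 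The first genuinely delicate point (Subsection \ref{ss:unif-bounds}) is to bound the Coriolis contribution to the BD estimate uniformly in $\veps$, which is not automatic because one has no control on $u_\veps$ itself, only on $\sqrt{\rho_\veps}\,u_\veps$. From these bounds and from $\rho_\veps\to1$ uniformly, assertions (a) and (b) follow along a subsequence, together with $\rho_\veps u_\veps\rightharpoonup u$.

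Next I would identify the limit constraint (Subsection \ref{ss:constraint}). Passing to the limit in the continuity equation yields $\div u=0$. Multiplying the momentum equation by $\veps$, the convective and viscous terms vanish as $\veps\to0$, and so does the capillarity term --- since $\veps^{2\alpha-1}\rho_\veps\nabla\Delta\rho_\veps=\veps^{2\alpha}\rho_\veps\nabla\Delta r_\veps\to0$ in $\mc{D}'$ for every $\alpha>0$ --- leaving the relation $\nabla r+e^3\times u=0$. This forces $u^3=0$, $\d_3 r=0$, hence $r=r(x^h)$, $u=\bigl(u^h(x^h),0\bigr)$ and $u^h=\nabla^\perp_hr$, in accordance with the Taylor--Proudman principle.

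The core of the proof is the dispersion step. Writing the system for $(r_\veps,\rho_\veps u_\veps)$ in the form $\d_tU_\veps+\veps^{-1}\mc{A}\,U_\veps=\veps^{-1}\,\div(\text{bounded})$, one decomposes $L^2$ into $\ker\mc{A}$ and its orthogonal complement. On $\ker\mc{A}$ the limit is already pinned down by the constraint above; on the orthogonal complement I would invoke the RAGE theorem for the acoustic propagator to show that the time average of the fast component of $U_\veps$ tends to $0$ in $L^2_{loc}$. For $\alpha=1$ the propagator is skew-adjoint for the usual $L^2$ product and one is essentially in the situation of \cite{F-G-N} (Section \ref{s:low-cap}); for $0<\alpha<1$ the anisotropic term is too large to be treated as a remainder and must be kept inside an $\veps$-dependent propagator, which is then skew-adjoint only with respect to a microlocal symmetrizer that also depends on $\veps$ --- this forces a RAGE-type theorem for families of operators and symmetrizers, Theorem \ref{th:RAGE_eps} (Section \ref{s:general_a}), whose proof is the main technical effort. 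Together with the compactness of $\rho_\veps$ in the density variable, this gives the strong convergences of $r_\veps$ and $\rho_\veps^{3/2}u_\veps$ in $L^2_TL^2_{loc}$, i.e.\ assertion (c).

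Finally I would pass to the limit in the weak momentum equation tested on $\rho_\veps\,\psi$ with $\psi=\bigl(\nabla^\perp_h\phi(x^h),0\bigr)$, $\phi\in\mc{D}$; such test fields annihilate the adjoint singular operator, so the singular terms integrate away once the constraint is used, while the strong convergence of $\rho_\veps^{3/2}u_\veps$ and the uniform bounds allow passage to the limit in every nonlinear term, including the extra ones produced by the modified formulation. Combining this with the limit of the continuity equation, which contributes the $\d_t r$ term, one obtains equation \eqref{eq:q-geo_1} for $r$ (equivalently, a transport--diffusion equation for the potential vorticity $r-\Delta_hr$). The initial datum is recovered by projecting the data onto $\ker\mc{A}$: the conserved kernel quantity at $t=0$ being the vertical average $\int_0^1(\omega^3_0+r_0)\,dx^3$, with $\omega_\veps=\nabla\times u_\veps$, one gets $(\Id-\Delta_h)\oline{r}_0=\int_0^1(\omega^3_0+r_0)\,dx^3$. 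I expect the principal obstacles to be the uniform Coriolis bound in the BD entropy (because $u_\veps$ is uncontrolled) and, in the regime $0<\alpha<1$, the $\veps$-dependent RAGE theorem needed to run the dispersion argument when both the propagator and its symmetrizer vary with $\veps$.
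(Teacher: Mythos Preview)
Your proposal is correct and follows essentially the same route as the paper: uniform bounds via energy and BD entropy (with the delicate Coriolis control of Subsection~\ref{ss:unif-bounds}), identification of the Taylor--Proudman constraint (Proposition~\ref{p:weak-limit}), dispersion of the fast component via RAGE, and passage to the limit in the modified weak formulation against $\rho_\veps\bigl(\nabla_h^\perp\phi,0\bigr)$. One small refinement: the paper observes that for $1/2\leq\alpha<1$ the capillarity contribution $\veps^{2\alpha}g_\veps$ is already $O(\veps)$ and can be absorbed into the remainder, so the $\veps$-dependent propagator and the RAGE-for-families machinery (Theorem~\ref{th:RAGE_eps}) are genuinely needed only for $0<\alpha<1/2$, though the method does cover the whole range.
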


\begin{rem} \label{r:uniqueness}
Let us point out that, the limit equation being the same, the uniqueness criterion given in Theorem 1.3 of \cite{F-G-N}
still holds true under our hypothesis.

Then, if $\omega^3_0\in L^2(\Omega)$, the solution $r$ to equation \eqref{eq:q-geo_1} is uniquely determined by the initial condition
in the subspace of distributions such that $\nabla_hr\in L^\infty\bigl(\R_+;H^1(\R^2)\bigr)\,\cap\,L^2\bigl(\R_+;\dot{H}^2(\R^2)\bigr)$,
and the whole sequence of weak solutions converges to it.
\end{rem}

Let us now turn our attention to the case $\alpha=0$, i.e. when the capillarity coefficient is taken to be constant.
\begin{thm} \label{th:alpha=0}
Let us take $\alpha=0$ in \eqref{eq:NSK+rot} and $1<\g\leq2$ in \eqref{eq:def_P}.

Let $\bigl(\rho_\veps\,,\,u_\veps\bigr)_\veps$ be a family of weak solutions to system
\eqref{eq:NSK+rot}-\eqref{eq:bc} in $[0,T]\times\Omega$, in the sense of Definition \ref{d:weak}, related to initial data
$\bigl(\rho_{0,\veps},u_{0,\veps}\bigr)_\veps$ satisfying the hypotheses ${\rm (i)-(ii)}$ and \eqref{eq:conv-initial}, and the symmetry
assumptions of Remark \ref{r:period-bc}..
We define $r_\veps\,:=\,\veps^{-1}\left(\rho_\veps-1\right)$, as before.

Then, up to the extraction of a subsequence, one has the convergence properties
\begin{itemize}
 \item[(a*)] $r_\veps\,\rightharpoonup\,r$ in $L^\infty\bigl([0,T];H^1(\Omega)\bigr)\,\cap\,L^2\bigl([0,T];H^2(\Omega)\bigr)$
\end{itemize}
and the same (b) and (c) stated in Theorem \ref{th:sing-lim}, where,
this time, $r=r(x^h)$ and $u=\bigl(u^h(x^h),0\bigr)$ are linked by the relation
$u^h\,=\,\nabla^\perp_h\left(\Id-\Delta_h\right)r$. Moreover,
$r$ solves (in the weak sense) the modified quasi-geostrophic equation
\begin{equation} \label{eq:q-geo_0}
\d_t\Bigl(r\,-\,\Delta_hr\,+\,\Delta_h^2r\Bigr)\,+\,\nabla^\perp_h\bigl(\Id-\Delta_h\bigr)r\,\cdot\,\nabla_h\Delta_h^2r\,+\,
\frac{\nu}{2}\,\Delta_h^2\bigl(\Id-\Delta_h\bigr)r\,=\,0
\end{equation}
supplemented with the initial condition $r_{|t=0}\,=\,\wtilde{r}_0$, where $\wtilde{r}_0\,\in\,H^3(\R^2)$ is the unique solution of
$$
\bigl(\Id-\Delta_h+\Delta_h^2\bigr)\,\wtilde{r}_0\,=\,\int_0^1\bigl(\omega^3_0\,+\,r_0\bigr)\,dx^3\,.
$$
\end{thm}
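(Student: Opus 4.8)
\textbf{Proof proposal for Theorem \ref{th:alpha=0}.}

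The plan is to follow the same skeleton as in the proof of Theorem \ref{th:sing-lim}, isolating the point where the constant-capillarity scaling ($\alpha=0$) changes the structure of the singular perturbation operator. First I would record the uniform bounds: from the classical energy estimate and the BD entropy estimate (uniform in $\veps$ thanks to the skew-symmetry of the Coriolis term, as explained in Subsection \ref{ss:unif-bounds}), one gets $(r_\veps)_\veps$ bounded in $L^\infty([0,T];H^1(\Omega))\cap L^2([0,T];H^2(\Omega))$ -- note the \emph{extra} derivative compared with the vanishing-capillarity case, because now the capillarity term contributes at order $1/\veps^2$ and its energy is $\veps$-uniform, which is exactly claim (a*) -- and $(\sqrt{\rho_\veps}\,u_\veps)_\veps$ bounded in $L^\infty([0,T];L^2(\Omega))$, $(\sqrt{\rho_\veps}\,Du_\veps)_\veps$ bounded in $L^2([0,T];L^2(\Omega))$. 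Weak compactness then yields (b) and the weak halves of (a*) and (c) up to a subsequence. Passing to the limit in the continuity equation and in the weak (Bresch--Desjardins--Lin) formulation of the momentum equation gives the \emph{constraints}: $\div(\rho_\veps u_\veps)\to 0$ and the $1/\veps$-singular terms force $u=(u^h(x^h),0)$ with $\div_h u^h=0$, while the balance of the pressure gradient, the capillarity term $\rho_\veps\nabla\Delta\rho_\veps$ and the Coriolis term at order $1/\veps$ now produces the modified relation $\nabla_h\bigl((\Id-\Delta_h)r\bigr)=\nabla_h^\perp\cdot(\text{something})$; unwinding it gives $u^h=\nabla_h^\perp(\Id-\Delta_h)r$ rather than $u^h=\nabla_h^\perp r$. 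All of this is carried out in Subsection \ref{ss:constraint}.

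The core of the argument is the strong convergence of the velocity fields, needed to pass to the limit in the nonlinear terms $\div(\rho_\veps u_\veps\otimes u_\veps)$ and in the new nonlinearities created by testing against $\rho_\veps\psi$. Here I would introduce the acoustic-wave system for the unknowns $(r_\veps, u_\veps)$ (more precisely for suitable projections) and the associated singular perturbation operator $\mc{A}_0$. Unlike the $\alpha=1$ case, $\mc{A}_0$ carries an additional term $-\Delta_h\nabla$ (schematically) coming from the surface tension at order $1/\veps^2$, so it is \emph{not} skew-adjoint for the plain $L^2$ inner product. The key observation is that $\mc{A}_0$ still has constant coefficients, hence can be analysed in Fourier variables: a direct computation of its symbol shows the point spectrum is purely imaginary and coincides with the kernel (the ``geostrophic'' subspace), and -- crucially -- one exhibits a microlocal symmetrizer in the sense of M\'etivier, i.e.\ an $\veps$-independent equivalent inner product (involving two spatial derivatives of $r$, which is why the $H^2$ bound on $r_\veps$ from (a*) is used) with respect to which $\mc{A}_0$ \emph{is} skew-adjoint. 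With this symmetrizer in hand, the RAGE theorem applies verbatim to $\mc{A}_0$ and yields the time-averaged dispersion/decay of the component of $(r_\veps,u_\veps)$ orthogonal to $\ker\mc{A}_0$; combined with the already-known convergence of the geostrophic component (via the constraint relating $u$ and $r$), this gives $r_\veps\to r$ and $\rho_\veps^{3/2}u_\veps\to u$ strongly in $L^2([0,T];L^2_{loc}(\Omega))$, i.e.\ (c).

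Finally, with strong convergence of the velocities secured, I would pass to the limit in the weak formulation tested against divergence-free, horizontal, $x^3$-independent test functions of the form $\nabla_h^\perp(\Id-\Delta_h)\phi$: the transport term converges because $\rho_\veps^{3/2}u_\veps\to u$ strongly and $\sqrt{\rho_\veps}u_\veps\rightharpoonup u$ weakly; the viscous term converges by (b); the singular terms vanish upon projecting onto the kernel; and the extra nonlinear terms produced by the $\rho_\veps\psi$ test functions are handled exactly as in Theorem \ref{th:sing-lim} using the uniform $H^1$ (now $H^2$) control of $r_\veps$ and the strong convergence. Collecting the limit identities and using $u^h=\nabla_h^\perp(\Id-\Delta_h)r$ to eliminate $u$ yields precisely equation \eqref{eq:q-geo_0}. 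The initial condition is identified by testing the limit of $r_\veps-\Delta_h(\Id-\Delta_h)r_\veps$ (the conserved ``potential vorticity'' for this scaling) against $x^3$-independent functions at $t=0$ and integrating in $x^3$, which gives $(\Id-\Delta_h+\Delta_h^2)\wtilde r_0=\int_0^1(\omega^3_0+r_0)\,dx^3$; the operator $\Id-\Delta_h+\Delta_h^2$ being elliptic of order $4$, this determines $\wtilde r_0\in H^3(\R^2)$ uniquely. The main obstacle I anticipate is the construction and use of the microlocal symmetrizer for $\mc{A}_0$: one must check that the new inner product is genuinely equivalent to the natural energy norm uniformly on frequencies, that the RAGE machinery (which is usually stated for self/skew-adjoint operators on a fixed Hilbert space) transfers cleanly to this modified inner product, and that the boundary conditions \eqref{eq:bc} (equivalently the parity extension of Remark \ref{r:period-bc}) are compatible with the spectral decomposition; everything else is a bookkeeping adaptation of \cite{F-G-N}.
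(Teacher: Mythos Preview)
Your proposal is correct and follows essentially the same approach as the paper: uniform bounds from energy and BD entropy, constraints from Proposition~\ref{p:weak-limit}, a symmetrizer $\langle(r_1,V_1),(r_2,V_2)\rangle:=\langle r_1,(\Id-\Delta)r_2\rangle_{L^2}+\langle V_1,V_2\rangle_{L^2}$ making $\mc{A}_0$ skew-adjoint on each $H_M$, RAGE for dispersion of the non-kernel part, and then passage to the limit.

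Two small discrepancies with the paper's execution are worth flagging. First, the paper tests against $\psi=(\nabla_h^\perp\phi,0)$ with $\phi=\phi(x^h)\in\mc{D}$, not against $\nabla_h^\perp(\Id-\Delta_h)\phi$; your choice also works but is a strict subset of test functions, so you would need a density argument to recover the full weak formulation. Second, and more substantively, the capillarity contribution $\frac{2}{\veps^2}\int\rho_\veps\Delta\rho_\veps\nabla\rho_\veps\cdot\psi$ is \emph{not} handled ``exactly as in Theorem~\ref{th:sing-lim}'': in the $\alpha=1$ case that term vanishes in the limit, whereas here it equals $2\int\Delta r_\veps\,\nabla r_\veps\cdot\psi$ plus a remainder and survives as $2\int\Delta r\,\nabla r\cdot\psi$ in the limit equation. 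To pass to the limit in this product of two weakly convergent sequences the paper upgrades $\nabla r_\veps\to\nabla r$ to \emph{strong} convergence in $L^2_T(L^2_{loc})$ by interpolating the strong convergence $r_\veps\to r$ in $L^2_T(H^{-1}_{loc})$ against the uniform $L^2_T(H^1)$ bound on $\nabla r_\veps$. You gesture at ``the strong convergence'' but this specific interpolation step is the new ingredient compared with the vanishing-capillarity proof and should be made explicit.
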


\section{Preliminaries and uniform bounds} \label{s:bounds}

The present section is devoted to stating the main properties of the family of weak solutions of Theorems \ref{th:sing-lim}
and \ref{th:alpha=0}.

First of all, we prove energy and BD entropy estimates for our system, uniformly with respect to the parameter $\veps$. This will justify
the properties required in Definition \ref{d:weak}.
From them, we will infer additional uniform bounds and further properties the family $\bigl(\rho_\veps,u_\veps\bigr)_\veps$ enjoys.
Finally, we will derive some constraints on its weak limit.

\subsection{Energy and BD entropy estimates} \label{ss:energies}

Suppose that $(\rho,u)$ is a smooth solution to system \eqref{eq:NSK+rot} in $[0,T[\,\times\Omega$ (for some
$T>0$), related to the smooth initial datum $\bigl(\rho_{0},u_{0}\bigr)$.

The first energy estimate, involving $E_\veps$, is obtained in a standard way.
\begin{prop} \label{p:E}
Let $(\rho,u)$ be a smooth solution to system \eqref{eq:NSK+rot} in $[0,T[\,\times\Omega$, with
initial datum $\bigl(\rho_0,u_0\bigr)$, for some positive time $T>0$.

Then, for all $\veps>0$ and all $t\in[0,T[\,$, one has
$$
\frac{d}{dt}E_\veps[\rho,u]\,+\,\nu\int_\Omega\rho\,|Du|^2\,dx\,=\,0\,.
$$
\end{prop}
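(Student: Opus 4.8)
\emph{Plan of proof.} The strategy is the classical energy method: test the momentum equation in \eqref{eq:NSK+rot} against the velocity field $u$, integrate over $\Omega$, and use the mass equation $\partial_t\rho=-\div(\rho u)$ to recast each term either as a total time derivative or as the dissipation $-\nu\int_\Omega\rho|Du|^2\,dx$. For a smooth solution all the integrations by parts below are legitimate, and the arising boundary contributions are killed by the slip conditions \eqref{eq:bc}.

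First I would record the kinetic energy identity: from $\partial_t(\rho u)\cdot u=(\partial_t\rho)\,|u|^2+\rho\,\partial_t u\cdot u$ one gets $\frac{d}{dt}\int_\Omega\frac12\rho|u|^2\,dx=\int_\Omega\partial_t(\rho u)\cdot u\,dx-\frac12\int_\Omega(\partial_t\rho)\,|u|^2\,dx$. Substituting $\partial_t(\rho u)$ from \eqref{eq:NSK+rot}, the convective term integrates by parts to $-\int_\Omega\div(\rho u\otimes u)\cdot u\,dx=\int_\Omega\rho\,u\cdot\nabla\bigl(\tfrac12|u|^2\bigr)\,dx=\frac12\int_\Omega(\partial_t\rho)\,|u|^2\,dx$, which exactly cancels the last term above (the boundary integrals vanish since $u\cdot n=u^3=0$ on $\partial\Omega$). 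The Coriolis term disappears pointwise, since $(e^3\times\rho u)\cdot u=\rho\,(e^3\times u)\cdot u=0$.

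Next I would handle the remaining three force terms. For the viscous term, one integration by parts gives $\int_\Omega\nu\,\div(\rho Du)\cdot u\,dx=-\nu\int_\Omega\rho\,Du:\nabla u\,dx+\nu\int_{\partial\Omega}\rho\,\bigl((Du)n\bigr)\cdot u\,dS$; here $Du:\nabla u=|Du|^2$ because $Du$ is symmetric, and the boundary integral is zero because $u$ is tangential to $\partial\Omega$ while $(Du)n$ is normal to it, which is precisely the content of $\bigl((Du)n\times n\bigr)|_{\partial\Omega}=0$. For the pressure term, an integration by parts together with the relations $h''(\rho)=P'(\rho)/\rho$, $h(1)=h'(1)=0$ and the mass equation gives $\frac{d}{dt}\int_\Omega h(\rho)\,dx=\int_\Omega \rho\,h''(\rho)\,\nabla\rho\cdot u\,dx=\int_\Omega\nabla P(\rho)\cdot u\,dx$, whence $-\frac{1}{\veps^2}\int_\Omega\nabla P(\rho)\cdot u\,dx=-\frac{1}{\veps^2}\frac{d}{dt}\int_\Omega h(\rho)\,dx$. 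For the capillarity term I would integrate by parts twice, using first $(\rho u)\cdot n=\rho u^3=0$ and then $\partial_n\rho=\partial_3\rho=0$ on $\partial\Omega$, to obtain $\int_\Omega\rho\,\nabla\Delta\rho\cdot u\,dx=-\int_\Omega\Delta\rho\,\div(\rho u)\,dx=\int_\Omega\Delta\rho\,\partial_t\rho\,dx=-\frac{d}{dt}\int_\Omega\tfrac12|\nabla\rho|^2\,dx$, so that $\frac{1}{\veps^{2(1-\alpha)}}\int_\Omega\rho\,\nabla\Delta\rho\cdot u\,dx=-\frac{d}{dt}\int_\Omega\frac{1}{2\veps^{2(1-\alpha)}}|\nabla\rho|^2\,dx$.

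Collecting all contributions and moving the pressure and capillarity time derivatives to the left-hand side yields exactly $\frac{d}{dt}E_\veps[\rho,u]+\nu\int_\Omega\rho|Du|^2\,dx=0$, which is the claim. The computation is elementary; the only delicate point is the bookkeeping of boundary terms, where each of the three conditions in \eqref{eq:bc} is used in an essential way ($u^3|_{\partial\Omega}=0$ for the convective and capillarity integrations by parts, $\partial_3\rho|_{\partial\Omega}=0$ for the Korteweg term, and $\bigl((Du)n\times n\bigr)|_{\partial\Omega}=0$ for the viscous term). Alternatively, after the even/odd extension of Remark \ref{r:period-bc} the problem lives on $\R^2\times\mathbb{T}^1$, which has no boundary, and these terms are absent from the outset.
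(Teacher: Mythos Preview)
Your proof is correct and follows essentially the same approach as the paper: test the momentum equation against $u$, use the mass equation and the orthogonality of the Coriolis term, identify $Du:\nabla u=|Du|^2$, and convert the pressure and capillarity contributions into time derivatives via $h''(\rho)=P'(\rho)/\rho$ and an integration by parts respectively. The only difference is that you spell out explicitly how each boundary condition in \eqref{eq:bc} kills the corresponding surface integral, whereas the paper simply omits these terms (implicitly relying on the periodic extension of Remark~\ref{r:period-bc}); this extra care is welcome but does not change the argument.
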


\begin{proof}
First of all, we multiply the second relation in system \eqref{eq:NSK+rot} by $u$: by use of the mass equation and
due to the fact that $e^3\times\rho u$ is orthogonal to $u$, we arrive at the identity:
$$
\frac{1}{2}\,\frac{d}{dt}\int_\Omega\rho\,|u|^2\,dx\,+\,
\frac{1}{\veps^2}\int_\Omega P'(\rho)\,\nabla\rho\cdot u\,dx\,+\,
\nu\,\int_\Omega\rho\,Du\,:\,\nabla u\,dx\,+\,
\frac{1}{2\,\veps^{2(1-\alpha)}}\,\frac{d}{dt}\int_\Omega|\nabla\rho|^2\,dx\,=\,0\,.
$$
On the one hand, we have the identity $Du:\nabla u=|Du|^2$; on the other hand, multiplying the equation for
$\rho$ by $h'(\rho)/\veps^2$ gives
$$
\frac{1}{\veps^2}\int_\Omega P'(\rho)\,\nabla\rho\cdot u\,dx\,=\,
\frac{1}{\veps^2}\,\frac{d}{dt}\int_\Omega h(\rho)\,dx\,.
$$
Putting this relation into the previous one concludes the proof of the proposition.
\end{proof}

Let us now consider the function $F_\veps$: we have the following estimate.
\begin{prop} \label{p:F}
Let $(\rho,u)$ be a smooth solution to system \eqref{eq:NSK+rot} in $[0,T[\,\times\Omega$, with
initial datum $\bigl(\rho_0,u_0\bigr)$, for some positive time $T>0$.

Then there exists a ``universal'' constant $C>0$ such that, for all $t\in[0,T[\,$, one has
\begin{eqnarray}
& & \hspace{-0.2cm}
\frac{1}{2}\int_\Omega\rho(t)\,\left|u(t)\,+\,\nu\,\nabla\log\rho(t)\right|^2\,dx\,+\,
\frac{\nu}{\veps^{2(1-\alpha)}}\int^t_0\!\!\int_\Omega\left|\nabla^2\rho\right|^2\,dx\,d\tau\,+ \label{est:F} \\
& & \quad +\,\frac{4\nu}{\veps^2}\int^t_0\!\!\int_\Omega P'(\rho)\,\left|\nabla\sqrt{\rho}\right|^2\,dx\,d\tau
\,\leq\,C\bigl(F_\veps[\rho_0]+E_\veps[\rho_0,u_0]\bigr)\,+\,
\frac{\nu}{\veps}\left|\int^t_0\!\!\int_\Omega e^3\times u\cdot\nabla\rho\,dx\,d\tau\right|.  \nonumber
\end{eqnarray}
\end{prop}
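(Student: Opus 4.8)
The plan is to derive the so-called BD entropy estimate by testing the momentum equation against $\nu\nabla\log\rho$ and combining the result with the basic energy balance of Proposition~\ref{p:E}. First I would use the mass equation $\d_t\rho+\div(\rho u)=0$ to obtain the evolution equation for $\nabla\log\rho$; differentiating, $\d_t\nabla\log\rho=-\nabla\div u-\nabla(u\cdot\nabla\log\rho)$, which after multiplication by $\rho$ produces a form suitable for pairing with the momentum equation. Equivalently, and more robustly at the level of weak solutions, I would directly compute $\frac{d}{dt}\int_\Omega\rho\,u\cdot\nu\nabla\log\rho\,dx$ using both equations of \eqref{eq:NSK+rot}: the time derivative hitting $\rho u$ is replaced by the momentum equation, the time derivative hitting $\nu\nabla\log\rho$ is handled via the mass equation. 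The key algebraic identity that makes this work for the chosen viscosity $\nu(\rho)=\nu\rho$ is the cancellation
$$
\nu\,\div(\rho\,Du)\cdot\nabla\log\rho\;+\;\text{(convective terms)}\;=\;\text{a perfect time derivative}\;+\;\text{good sign terms},
$$
which is exactly the Bresch--Desjardins structural observation; this is why $\sigma(\rho)=\rho$ and $\nu(\rho)=\nu\rho$ were fixed.

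Next I would identify each contribution. The pressure term $\frac{1}{\veps^2}\nabla P(\rho)\cdot\nu\nabla\log\rho=\frac{\nu}{\veps^2}P'(\rho)\frac{|\nabla\rho|^2}{\rho}=\frac{4\nu}{\veps^2}P'(\rho)|\nabla\sqrt\rho|^2$ is nonnegative and gives the third term on the left of \eqref{est:F}. The capillarity term $-\frac{1}{\veps^{2(1-\alpha)}}\rho\nabla\Delta\rho$ tested against $\nu\nabla\log\rho=\nu\nabla\rho/\rho$ gives $-\frac{\nu}{\veps^{2(1-\alpha)}}\int\nabla\Delta\rho\cdot\nabla\rho\,dx=\frac{\nu}{\veps^{2(1-\alpha)}}\int|\Delta\rho|^2dx$, which, using the boundary conditions \eqref{eq:bc} and integration by parts, controls $\int|\nabla^2\rho|^2dx$ up to lower-order terms (here the Korteweg term is what upgrades the regularity of $\rho$, producing the second term on the left of \eqref{est:F}). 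Adding $\nu$ times this tested identity to the energy balance of Proposition~\ref{p:E}, the cross terms reorganize so that $\frac12\int\rho|u|^2\,dx$ and $\nu\int\rho\,u\cdot\nabla\log\rho\,dx$ and $\frac{\nu^2}{2}\int\rho|\nabla\log\rho|^2\,dx$ combine into $\frac12\int\rho|u+\nu\nabla\log\rho|^2\,dx$, the quantity on the left of \eqref{est:F}; the $h(\rho)$ and $|\nabla\rho|^2$ pieces of $E_\veps$ are kept on the right, bounded by $C\,E_\veps[\rho_0,u_0]$ after time integration. One also absorbs the viscous term $\nu\int\rho|Du|^2dx$ coming from Proposition~\ref{p:E} on the good side.

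The only term that is genuinely not signed and does not close by itself is the Coriolis contribution: $\frac{1}{\veps}e^3\times\rho u$ tested against $\nu\nabla\log\rho=\nu\nabla\rho/\rho$ yields $\frac{\nu}{\veps}\int e^3\times u\cdot\nabla\rho\,dx$, and integrating in time gives exactly the last term on the right-hand side of \eqref{est:F}. I would simply leave it there, in absolute value, since at this stage the aim is only to record the identity; the uniform (in $\veps$) bound on this rotation term is the delicate point deferred to Subsection~\ref{ss:unif-bounds} (as the introduction emphasizes, no control on $u_\veps$ is available a priori, unlike in \cite{B-D_2003} where laminar friction handled it). The main obstacle in the present proof is therefore bookkeeping: carefully justifying the integrations by parts against the boundary conditions \eqref{eq:bc} (in particular that no boundary terms appear from $\d_3\rho|_{\d\Omega}=0$ and $u^3|_{\d\Omega}=0$), and checking that every commutator term generated by moving $\nabla\log\rho$ through the convective derivative $\d_t(\rho u)+\div(\rho u\otimes u)$ either is a perfect time derivative or is dominated by $C(E_\veps+F_\veps)$ plus the already-present good terms. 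Since the density is only known to be an $H^1$-type object at this point, one should really perform the computation on the smooth approximate solutions (whose existence is recalled from \cite{B-D-L}) and pass to the limit, which is routine given the stated regularity.
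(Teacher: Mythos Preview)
Your proposal is correct and follows essentially the same BD-entropy route as the paper: multiply the momentum equation by $\nu\nabla\log\rho=\nu\nabla\rho/\rho$, add $\nu^2$ times the identity for $\tfrac{d}{dt}\int\rho|\nabla\log\rho|^2$ obtained from the mass equation, and then use Proposition~\ref{p:E} to bound the residual $\tfrac12\tfrac{d}{dt}\int\rho|u|^2+\nu\int\rho\,\nabla u:{}^t\nabla u$ and complete the square $\tfrac12\int\rho|u+\nu\nabla\log\rho|^2$. Two minor corrections: under the boundary conditions \eqref{eq:bc} one has exactly $\int_\Omega|\Delta\rho|^2=\int_\Omega|\nabla^2\rho|^2$ (no lower-order terms are needed), and since the proposition is stated for smooth solutions the approximation argument you mention at the end is unnecessary here.
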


\begin{proof}
We will argue as in Section 3 of \cite{B-D-L}. First, by Lemma 2 of that paper we have the identity
\begin{equation} \label{eq:lemma2}
\frac{1}{2}\,\frac{d}{dt}\int_\Omega\rho\,\left|\nabla\log\rho\right|^2\,+\,\int_\Omega\nabla\div u\cdot
\nabla\rho\,+\,\int_\Omega\rho\,Du:\nabla\log\rho\otimes\nabla\log\rho\,=\,0\,.
\end{equation}

Next, we multiply the momentum equation by $\nu\,\nabla\rho/\rho$ and we integrate over $\Omega$: we find
\begin{eqnarray*}
& & \hspace{-1.5cm}
\nu\int_\Omega\left(\d_tu+u\cdot\nabla u\right)\cdot\nabla\rho\,+\,
\nu^2\int_\Omega Du:\left(\nabla^2\rho-\frac{1}{\rho}\nabla\rho\otimes\nabla\rho\right)\,+ \\
& & \qquad\qquad
+\,\frac{\nu}{\veps}\int_\Omega e^3\times u\cdot\nabla\rho\,+\,
\frac{\nu}{\veps^{2(1-\alpha)}}\int_\Omega\left|\nabla^2\rho\right|^2\,+\,
\frac{4\,\nu}{\veps^2}\int_\Omega P'(\rho)\left|\nabla\sqrt{\rho}\right|^2\,=\,0\,.
\end{eqnarray*}

Now we add \eqref{eq:lemma2}, multiplied by $\nu^2$, to this last relation, getting
\begin{eqnarray*}
& & \hspace{-1cm}
\frac{\nu^2}{2}\,\frac{d}{dt}\int_\Omega\rho\left|\nabla\log\rho\right|^2\,+\,
\frac{\nu}{\veps^{2(1-\alpha)}}\int_\Omega\left|\nabla^2\rho\right|^2\,+\,
\frac{4\nu}{\veps^2}\int_\Omega P'(\rho)\left|\nabla\sqrt{\rho}\right|^2\,+\,
\frac{\nu}{\veps}\int_\Omega e^3\times u\cdot\nabla\rho \\
& & \qquad =\,-\,\nu\int_\Omega\d_tu\cdot\nabla\rho\,-\,\nu^2\int_\Omega\nabla\div u\cdot\nabla\rho\,-\,
\nu\int_\Omega\left(u\cdot\nabla u\right)\cdot\nabla\rho\,-\,\nu^2\int_\Omega Du:\nabla^2\rho\,.
\end{eqnarray*}
Using the mass equation and the identities
\begin{eqnarray*}
-\,\int_\Omega u\cdot\nabla\div(\rho u)\,-\,\int_\Omega\left(u\cdot\nabla u\right)\cdot\nabla\rho & = & 
\int_\Omega\rho\nabla u:\,^t\nabla u \\
-\,\int_\Omega\nabla\div u\cdot\nabla\rho\,-\,\int_\Omega Du:\nabla^2\rho & = & 0
\end{eqnarray*}
we end up with the equality
\begin{eqnarray*}
& & \hspace{-1.5cm}\frac{d}{dt}F_\veps\,+\,
\frac{\nu}{\veps^{2(1-\alpha)}}\int_\Omega\left|\nabla^2\rho\right|^2\,dx\,+\,
\frac{4\nu}{\veps^2}\int_\Omega P'(\rho)\,\left|\nabla\sqrt{\rho}\right|^2\,dx\,+ \\
& & \qquad\qquad +\,\frac{\nu}{\veps}\int_\Omega e^3\times u\cdot\nabla\rho\,dx\,=\,
-\,\nu\,\frac{d}{dt}\int_\Omega u\cdot\nabla\rho\,dx\,+\,\nu\int_\Omega\rho\,\nabla u\,:\,^t\nabla u\,dx\,.
\end{eqnarray*}
We notice that this relation can be rewritten in the following way:
\begin{eqnarray*}
& & \hspace{-1cm}
\frac{1}{2}\,\frac{d}{dt}\int_\Omega\rho\,\left|u\,+\,\nu\,\nabla\log\rho\right|^2\,dx\,+\,
\frac{\nu}{\veps^{2(1-\alpha)}}\int_\Omega\left|\nabla^2\rho\right|^2\,dx\,+\,
\frac{4\nu}{\veps^2}\int_\Omega P'(\rho)\,\left|\nabla\sqrt{\rho}\right|^2\,dx\,+ \\
& & \qquad\qquad\qquad\qquad +\,\frac{\nu}{\veps}\int_\Omega e^3\times u\cdot\nabla\rho\,dx\,=\,
\frac{1}{2}\,\frac{d}{dt}\int_\Omega \rho\,|u|^2\,dx\,+\,\nu\int_\Omega\rho\,\nabla u\,:\,^t\nabla u\,dx\,.
\end{eqnarray*}

Now we integrate with respect to time and we use Proposition \ref{p:E}.
\end{proof}

Observe that, writing $e^3\times u\cdot\nabla\rho\,=\,2\,e^3\times\left(\sqrt{\rho}u\right)\cdot\nabla\sqrt{\rho}$ and using Young's inequality
and Proposition \ref{p:E}, one can control the last term in \eqref{est:F} and bound the quantity
$$
F_\veps[\rho](t)\,+\,\frac{\nu}{\veps^{2(1-\alpha)}}\int^t_0\int_\Omega\left|\nabla^2\rho\right|^2\,dx\,d\tau\,+\,
\frac{\nu}{\veps^2}\int^t_0\int_\Omega P'(\rho)\,\left|\nabla\sqrt{\rho}\right|^2\,dx\,d\tau\,.
$$
Such a bound is enough to get additional regularity for the sequence of smooth approximate densities when constructing a weak solution,
but it is not uniform with respect to $\veps$: so it is not suitable to fully exploit the BD entropy structure of the system
in our study.

Nonetheless, in Proposition \ref{p:F-unif} below we are going to show that, under our assumptions, it is possible to control
the right-hand side of \eqref{est:F} in a uniform way with respect to $\veps$. This is a key point in order to prove
our results.

\subsection{Uniform bounds} \label{ss:unif-bounds}

Now, we are going to establish uniform properties the family $\bigl(\rho_\veps,u_\veps\bigr)$ satisfies.

First of all, by Proposition \ref{p:E} and Remark \ref{r:en-bounds}, we infer the following properties:
\begin{itemize}
 \item $\left(\veps^{-2}\,h(\rho_\veps)\right)_\veps\,\subset\,L^\infty\bigl(\R_+;L^1(\Omega)\bigr)$ is bounded;
 \item $\left(\sqrt{\rho_\veps}\,u_\veps\right)_\veps$ is bounded in $L^\infty\bigl(\R_+;L^2(\Omega)\bigr)$;
 \item $\left(\sqrt{\rho_\veps}\,Du_\veps\right)_\veps$ is a bounded subset of $L^2\bigl(\R_+;L^2(\Omega)\bigr)$;
 \item $\left(\nabla\rho_\veps\right)_\veps\,\subset\,L^\infty\bigl(\R_+;L^2(\Omega)\bigr)$, with
$$
\left\|\nabla\rho_\veps\right\|_{L^\infty(\R_+;L^2(\Omega))}\,\leq\,C\,\veps^{1-\alpha}\,,
$$
for some positive constant $C$.
\end{itemize}

Furthermore, arguing as in the proof to Lemma 2 of \cite{J-L-W}, from the uniform bound on the internal energy $h$ we infer the control
$$
\left\|\rho_\veps\,-\,1\right\|_{L^\infty(\R_+;L^\g(\Omega))}\,\leq\,C\,\veps\,.
$$
In particular, under our assumptions on $\alpha$ and $\g$, we always find (see point (i) of Lemma \ref{l:density})
\begin{equation} \label{est:rho_L^inf-L^2}
\left\|\rho_\veps\,-\,1\right\|_{L^\infty(\R_+;L^2(\Omega))}\,\leq\,C\,\veps\,.
\end{equation}

Now, as announced above, we are going to show how to derive, under our assumptions, BD entropy bounds which are uniform in $\veps$.
Of course, all the computations are justified for smooth functions: however, by a standard approximation procedure,
the final bounds will be fulfilled also by the family of weak solutions.
\begin{prop} \label{p:F-unif}
Let $\bigl(\rho_{0,\veps},u_{0,\veps}\bigr)_\veps$ be a family of initial data satisfying the assumptions (i)-(ii) of Subsection
\ref{ss:hypotheses}, and let $\bigl(\rho_\veps,u_\veps\bigr)_\veps$ be a family of corresponding weak solutions.

Then there exist an $\veps_0>0$ and a constant $C>0$ (depending just on 
the constant $K_0$ of Remark \ref{r:en-bounds} and on the viscosity coefficient $\nu$) such that the inequality
$$
F_\veps[\rho_\veps](t)\,+\,\dfrac{\nu}{\veps^2}\int^t_0\int_\Omega P'(\rho_\veps)\,|\nabla\sqrt{\rho_\veps}|^2\,dx\,d\tau\,+\,
\dfrac{\nu}{\veps^{2(1-\alpha)}}\int^t_0\int_\Omega\left|\nabla^2\rho_\veps\right|^2\,dx\,d\tau\,\leq\,C\,(1\,+\,t)
$$
holds true for any $t>0$ and for all $0<\veps\leq\veps_0$.
\end{prop}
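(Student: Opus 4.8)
The plan is to upgrade the (non-uniform) estimate of Proposition~\ref{p:F} into a uniform one by gaining control of the Coriolis remainder
$$
R_\veps(t)\,:=\,\frac{\nu}{\veps}\left|\int_0^t\!\!\int_\Omega e^3\times u_\veps\cdot\nabla\rho_\veps\,dx\,d\tau\right|.
$$
As suggested in Remark~\ref{r:F}, I would rewrite $e^3\times u_\veps\cdot\nabla\rho_\veps\,=\,e^3\times(\sqrt{\rho_\veps}\,u_\veps)\cdot\nabla\sqrt{\rho_\veps}$ and then, rather than using Young's inequality crudely (which only reproduces the non-uniform bound), exploit the extra factor $\veps$ hidden in the density oscillation. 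The point is that $\nabla\sqrt{\rho_\veps}$ is small: from \eqref{est:rho_L^inf-L^2} and the definition $\rho_\veps=1+\veps r_\veps$, together with the pointwise bound $\bigl|\nabla\sqrt{\rho_\veps}\bigr|\lesssim\bigl|\nabla\rho_\veps\bigr|/\sqrt{\rho_\veps}$ on the set where $\rho_\veps$ is, say, bounded below, one expects $\|\nabla\sqrt{\rho_\veps}\|_{L^\infty_tL^2}\lesssim\veps^{1-\alpha}\lesssim\veps$ from the fourth bullet in Subsection~\ref{ss:unif-bounds}. Hence, using the uniform bound on $\sqrt{\rho_\veps}\,u_\veps$ in $L^\infty_tL^2$,
$$
R_\veps(t)\,\leq\,\frac{\nu}{\veps}\int_0^t\bigl\|\sqrt{\rho_\veps}\,u_\veps\bigr\|_{L^2}\,\bigl\|\nabla\sqrt{\rho_\veps}\bigr\|_{L^2}\,d\tau\,\leq\,\frac{C\nu}{\veps}\cdot\veps^{1-\alpha}\cdot t\,\leq\,C\,t
$$
for $\alpha\leq1$ (the borderline $\alpha=1$ is exactly fine, and $\alpha<1$ is even better). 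Plugging this into \eqref{est:F} and using Remark~\ref{r:en-bounds} to bound $E_\veps[\rho_{0,\veps},u_{0,\veps}]+F_\veps[\rho_{0,\veps}]\leq K_0$ gives the claimed inequality $F_\veps[\rho_\veps](t)+\ldots\leq C(1+t)$.

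There is one subtlety: the left-hand side of \eqref{est:F} is not literally $F_\veps[\rho_\veps](t)$ but $\tfrac12\int_\Omega\rho_\veps|u_\veps+\nu\nabla\log\rho_\veps|^2\,dx$, which controls $F_\veps[\rho_\veps]$ only up to the cross term $\nu\int_\Omega\sqrt{\rho_\veps}\,u_\veps\cdot\sqrt{\rho_\veps}\,\nabla\log\rho_\veps\,dx=\nu\int_\Omega\sqrt{\rho_\veps}\,u_\veps\cdot2\nabla\sqrt{\rho_\veps}\,dx$. But this term is again estimated by the same Cauchy--Schwarz argument, $\lesssim\nu\,\|\sqrt{\rho_\veps}\,u_\veps\|_{L^2}\,\|\nabla\sqrt{\rho_\veps}\|_{L^2}\lesssim\veps^{1-\alpha}$, which is uniformly bounded; so passing from the modulated quantity to $F_\veps[\rho_\veps]$ itself costs only an additive constant. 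Similarly one should double-check that expanding the square on the left of \eqref{est:F} does not lose the two genuinely useful space-time integrals — it does not, since they appear with a fixed sign.

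The main obstacle I anticipate is the justification of the bound $\|\nabla\sqrt{\rho_\veps}\|_{L^\infty_tL^2}\lesssim\veps^{1-\alpha}$: a priori the weak solution only gives $\nabla\sqrt{\rho_\veps}\in L^\infty_tL^2$ (bullet~2 of Subsection~\ref{ss:unif-bounds} gives no smallness), while the quantitative smallness comes from the fourth bullet, $\|\nabla\rho_\veps\|_{L^\infty_tL^2}\leq C\veps^{1-\alpha}$, combined with a lower bound on $\rho_\veps$. Since we work with smooth approximate solutions here (as in the statement), $\rho_\veps$ is bounded below away from zero for each fixed $\veps$, but that bound degenerates as $\veps\to0$; the clean way around this is to observe that the whole argument can be run with $\nabla\rho_\veps$ in place of $\nabla\sqrt{\rho_\veps}$ at the cost of a factor $1/\sqrt{\rho_\veps}$ inside the integral, i.e. to write $e^3\times u_\veps\cdot\nabla\rho_\veps$ and estimate $\bigl|\int e^3\times u_\veps\cdot\nabla\rho_\veps\bigr|\leq\|\sqrt{\rho_\veps}\,u_\veps\|_{L^2}\,\|\rho_\veps^{-1/2}\nabla\rho_\veps\|_{L^2}=2\|\sqrt{\rho_\veps}\,u_\veps\|_{L^2}\,\|\nabla\sqrt{\rho_\veps}\|_{L^2}$, so that no pointwise lower bound is ever needed and the $\veps^{1-\alpha}$ smallness is inherited directly from bullet~4. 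Once this is set up, choosing $\veps_0$ merely ensures $\veps^{1-\alpha}\leq1$ so that all the $\veps$-dependent constants are absorbed, and integrating \eqref{est:F} in time together with Proposition~\ref{p:E} closes the argument; the constant $C$ depends only on $K_0$ and $\nu$, as asserted.
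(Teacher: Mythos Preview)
Your approach has a fatal arithmetic error. You claim $\veps^{1-\alpha}\lesssim\veps$ and hence $\frac{1}{\veps}\cdot\veps^{1-\alpha}\leq C$, but for $\alpha\in(0,1]$ and $\veps\in(0,1)$ the inequality goes the \emph{other} way: $\veps^{1-\alpha}\geq\veps$, so $\veps^{-1}\cdot\veps^{1-\alpha}=\veps^{-\alpha}\to\infty$ as $\veps\to0$. Thus even granting the bound $\|\nabla\sqrt{\rho_\veps}\|_{L^\infty_tL^2}\lesssim\veps^{1-\alpha}$, your estimate yields only $R_\veps(t)\leq C\nu\,\veps^{-\alpha}\,t$, which is uniform only when $\alpha=0$; the ``borderline $\alpha=1$'' is the worst case, not ``exactly fine''. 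Your final paragraph compounds the problem: rewriting $\|\rho_\veps^{-1/2}\nabla\rho_\veps\|_{L^2}=2\|\nabla\sqrt{\rho_\veps}\|_{L^2}$ is circular, since bullet~4 gives smallness of $\nabla\rho_\veps$, not of $\nabla\sqrt{\rho_\veps}$, and no lower bound on $\rho_\veps$ is available to pass from one to the other.

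The paper does not try to make the Coriolis remainder small via a direct size bound on $\nabla\sqrt{\rho_\veps}$; instead it arranges things so that, after Young's inequality, the dangerous pieces are \emph{absorbed} into the good terms on the left of \eqref{est:F}. One writes
$$
\int_\Omega e^3\times u\cdot\nabla\rho\;=\;\frac{\g+1}{2}\int_\Omega e^3\times(\sqrt\rho\,u)\cdot\rho^{(\g-1)/2}\nabla\sqrt\rho\;-\;\int_\Omega\rho\,\omega^3\,\bigl(1-\rho^{(\g-1)/2}\bigr)\,,
$$
the weight $\rho^{(\g-1)/2}$ being introduced precisely so that $\rho^{(\g-1)/2}\nabla\sqrt\rho=\sqrt{P'(\rho)}\,\nabla\sqrt\rho$. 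For the first integral, Young's inequality gives $C\nu t+\dfrac{\nu}{2\veps^2}\int_0^t\!\int_\Omega P'(\rho)|\nabla\sqrt\rho|^2$, which is swallowed by the term $\dfrac{4\nu}{\veps^2}\int_0^t\!\int_\Omega P'(\rho)|\nabla\sqrt\rho|^2$ already present on the left of \eqref{est:F}. For the vorticity piece one uses $|1-\rho^{(\g-1)/2}|\leq|\rho-1|$ together with \eqref{est:rho_L^inf-L^2} to gain a full factor $\veps$ (this is exactly where the restriction $\g=2$ for $\alpha>0$ enters, cf.\ Remark~\ref{r:rho_L^2}); the residual $\|\rho\|_{L^\infty}^{1/2}$ is then handled by Lemma~\ref{l:density} and a small multiple of $\int_0^t\|\nabla^2\rho\|_{L^2}^2$ is absorbed into $\dfrac{\nu}{\veps^{2(1-\alpha)}}\int_0^t\!\int_\Omega|\nabla^2\rho|^2$. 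Both absorptions work uniformly in $\alpha\in[0,1]$, which is what your direct size estimate cannot achieve.
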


\begin{proof}
Our starting point is the inequality stated in Proposition \ref{p:F}: we have to control the last term in its right-hand side. For
convenience, let us omit for a while the index $\veps$ in the notation.

First of all, we can write
\begin{eqnarray*}
\int_\Omega e^3\times u\cdot\nabla\rho & = & \int_\Omega\rho^{(\g-1)/2}\,e^3\times u\cdot\nabla\rho\,+\,
\int_\Omega\left(1-\rho^{(\g-1)/2}\right)\,e^3\times u\cdot\nabla\rho \\
& = & 2\int_\Omega e^3\times\left(\sqrt{\rho}\,u\right)\cdot\nabla\sqrt{\rho}\;\rho^{(\g-1)/2}\,+\,
\int_\Omega\left(1-\rho^{(\g-1)/2}\right)\,e^3\times u\cdot\nabla\rho\,.
\end{eqnarray*}
Now we focus on the last term: integrating by parts
we get
$$
\int_\Omega\left(1-\rho^{(\g-1)/2}\right)\,e^3\times u\cdot\nabla\rho\,=\,\int_\Omega\rho\,\omega^3\,\left(1-\rho^{(\g-1)/2}\right)\,+\,
\frac{\g-1}{2}\int_\Omega\rho^{(\g-1)/2}\,e^3\times u\cdot\nabla\rho\,,
$$
where we denoted by $\omega\,=\,\nabla\times u$ the vorticity of the fluid. Therefore, in the end we find
$$
\int_\Omega e^3\times u\cdot\nabla\rho\,=\,
(\g+1)\int_\Omega e^3\times\left(\sqrt{\rho}\,u\right)\cdot\nabla\sqrt{\rho}\;\rho^{(\g-1)/2}\,+\,
\int_\Omega\rho\,\omega^3\,\left(1-\rho^{(\g-1)/2}\right)\,.
$$

Let us deal with the first term: we have
\begin{eqnarray}
\frac{\nu}{\veps}\left|\int^t_0\int_\Omega e^3\times\left(\sqrt{\rho}\,u\right)\cdot\nabla\sqrt{\rho}\;\rho^{(\g-1)/2}\right| & \leq &
\frac{\nu}{\veps}\int^t_0\left\|\sqrt{\rho}\,u\right\|_{L^2}\,\left\|\rho^{(\g-1)/2}\,\nabla\sqrt{\rho}\right\|_{L^2} \label{est:F_1} \\
& \leq & C\,\nu\,t\,+\frac{\nu}{2\,\veps^2}\int^t_0\left\|\rho^{(\g-1)/2}\,\nabla\sqrt{\rho}\right\|^2_{L^2}\,, \nonumber
\end{eqnarray}
where we have used also the uniform bounds for $\bigl(\sqrt{\rho_\veps}\,u_\veps\bigr)_\veps$ and Young's inequality.
Notice that, as $P'(\rho)=\rho^{\g-1}$, the last term can be absorbed in the left-hand side of \eqref{est:F}.

Now we consider the term involving the vorticity. Notice that, since $0<(\g-1)/2\leq1/2$, we can bound $|\rho^{(\g-1)/2}-1|$
with $|\rho-1|$; then, using also the established uniform bounds, we get
\begin{eqnarray*}
\frac{\nu}{\veps}\left|\int^t_0\int_\Omega\rho\,\omega^3\,\left(1-\rho^{(\g-1)/2}\right)\right| & \leq & 
\frac{\nu}{\veps}\int^t_0\|\rho-1\|_{L^2}\,
\left\|\sqrt{\rho}\,Du\right\|_{L^2}\,\|\rho\|^{1/2}_{L^\infty} \\
& \leq & C\,\nu\left(\int^t_0\|\rho\|_{L^\infty}\right)^{1/2}\,.
\end{eqnarray*}
In order to control the $L^\infty$ norm of the density, we write $\rho=1+(\rho-1)$: for the second term we use Lemma \ref{l:density}
with $p=2$ and $\delta=1/2$. Keeping in mind also estimate \eqref{est:rho_L^inf-L^2}, we get
\begin{eqnarray*}
C\,\nu\left(\int^t_0\|\rho\|_{L^\infty}\right)^{1/2} & \leq & 
C\,\nu\left(\int^t_0\left(1+\left\|\nabla^2\rho\right\|_{L^2}\right)\right)^{1/2} \\
& \leq & \frac{C\,\nu}{2}\,(1+t)\,+\,\frac{C\,\nu}{2}\,\int^t_0\left\|\nabla^2\rho\right\|_{L^2} \\
& \leq & C'\,\nu\,(1+t)\,+\,\frac{\nu}{4}\,\int^t_0\left\|\nabla^2\rho\right\|^2_{L^2}\,,
\end{eqnarray*}
where we used twice Young's inequality. Hence, in the end we obtain
\begin{equation} \label{est:F_2}
\frac{\nu}{\veps}\left|\int^t_0\int_\Omega\rho\,\omega^3\,\left(1-\rho^{(\g-1)/2}\right)\right|\,\leq\,C\,\nu\,(1+t)\,+\,
C_\veps\,\frac{\nu}{\veps^{2(1-\alpha)}}\,\left\|\nabla^2\rho\right\|^2_{L^2_t(L^2)}\,,
\end{equation}
with $C_\veps\,=\,\veps^{2(1-\alpha)}/4$.
Then, for any $\alpha\in[0,1]$ we can absorb the last term of this estimate into the left-hand side of \eqref{est:F}.

Therefore, thanks to inequalities \eqref{est:F_1} and \eqref{est:F_2}, combined with \eqref{est:F}, we get the result.
\end{proof}


\begin{rem} \label{r:rho_L^2}
The approach we followed seems to suggest that having $\|\rho_\veps-1\|_{L^\infty_T(L^2)}\,\sim\,O(\veps)$ is necessary to control
the term coming from rotation in \eqref{est:F} and so to close the estimates (see in particular the bounds for the term
involving vorticity).

This is the only (technical) reason for which we assumed $\g=2$ when $0<\alpha\leq1$ (low capillarity limit),
while for $\alpha=0$ (constant capillarity case) we can take more general pressure laws, namely any $1<\g\leq2$,
since we still have inequality \eqref{est:rho_L^inf-L^2}.
\end{rem}

By the bounds established in Proposition \ref{p:F-unif}, we infer also the following properties:
\begin{itemize}
\item $\left(\sqrt{\rho_\veps}\,\,\nabla\log\rho_\veps\right)_\veps$ is bounded in the space $L^\infty_{loc}\bigl(\R_+;L^2(\Omega)\bigr)$;
\item $\left(\veps^{-1}\,\rho_\veps^{(\g-1)/2}\,\nabla\sqrt\rho_\veps\right)_\veps\,\subset\,L^2_{loc}\bigl(\R_+;L^2(\Omega)\bigr)$ bounded;
\item $\left(\veps^{-(1-\alpha)}\,\nabla^2\rho_\veps\right)_\veps$ is bounded in $L^2_{loc}\bigl(\R_+;L^2(\Omega)\bigr)$.
\end{itemize}
In particular, from the last fact combined with estimate \eqref{est:rho_L^inf-L^2} and Lemma \ref{l:density}, we also gather that,
for any fixed positive time $T$,
\begin{equation} \label{est:rho_L^2-L^inf}
\left\|\rho_\veps\,-\,1\right\|_{L^2([0,T];L^\infty(\Omega))}\,\leq\,C_T\,\veps^{1-\alpha}\,,
\end{equation}
where we denote by $C_T$ a quantity proportional (for some ``universal'' constant) to $1+T$.

Note that, thanks to the equality $\sqrt{\rho}\,\,\nabla\log\rho\,=\,2\,\nabla\sqrt{\rho}$, from the previous bounds we get also that
$\left(\nabla\sqrt{\rho_\veps}\right)_\veps$ is bounded in $L^\infty\bigl([0,T];L^2(\Omega)\bigr)$, for any $T>0$ fixed.

Let us also remark that we have a nice decay of the first derivatives of $\rho_\veps$ even in the low capillarity
regime: namely, for $0<\alpha\leq1$ (and then $\g=2$), one has
\begin{equation} \label{est:nabla-rho_L^2-L^2}
\left\|\nabla\rho_\veps\right\|_{L^2_T(L^2)}\,\leq\,C_T\,\veps
\end{equation}
for any $T>0$ fixed (see the second point of the previous list of bounds). Notice that the constant $C_T$
does not depend on $\alpha$ (recall Proposition \ref{p:F-unif}).

Finally, let us state an important property on the quantity $D\bigl(\rho^{3/2}_\veps\,u_\veps\bigr)$. First of all, we write
\begin{eqnarray}
D\bigl(\rho^{3/2}_\veps\,u_\veps\bigr) & = & \rho_\veps\,\sqrt{\rho_\veps}\,Du_\veps\,+\,\frac{3}{2}\,\sqrt{\rho_\veps}\,u_\veps\,
D\rho_\veps \label{eq:D(rho-u)} \\
& = & \sqrt{\rho_\veps}\,Du_\veps\,+\,\left(\rho_\veps\,-\,1\right)\sqrt{\rho_\veps}\,Du_\veps\,+\,
\frac{3}{2}\,\sqrt{\rho_\veps}\,u_\veps\,D\rho_\veps\,. \nonumber
\end{eqnarray}
The first term in the right-hand side clearly belongs to $L^2_T(L^2)$, while, by uniform bounds and Sobolev embeddings,
the second and the third ones are uniformly bounded in $L^2_T(L^{3/2})$.
Therefore, we infer that 
$\Bigl(D\bigl(\rho^{3/2}_\veps\,u_\veps\bigr)\Bigr)_\veps$ is a uniformly bounded family in $L^2_T(L^2+L^{3/2})$.

\subsection{Constraints on the limit} \label{ss:constraint}

As specified in the introduction, we want to study the weak limit of the family $\bigl(\rho_\veps,u_\veps\bigr)_\veps$, i.e.
we want to pass to the limit for $\veps\ra0$ in equations \eqref{eq:weak-mass}-\eqref{eq:weak-momentum} when computed
on $\bigl(\rho_\veps,u_\veps\bigr)$.

The present paragraph is devoted to establishing some properties the weak limit has to satisfy.

\medbreak
By uniform bounds, seeing $L^\infty$ as the dual of $L^1$, we infer, up to extraction of subsequences,
the weak convergences
\begin{eqnarray*}
\sqrt{\rho_\veps}\,u_\veps\,\stackrel{*}{\rightharpoonup}\,u & 	\qquad\qquad\mbox{ in }\quad &  L^\infty\bigl(\R_+;L^2(\Omega)\bigr) \\[1ex]
\sqrt{\rho_\veps}\,Du_\veps\,\rightharpoonup\,U & \qquad\qquad\mbox{ in }\quad &  L^2\bigl(\R_+;L^2(\Omega)\bigr)\,.
\end{eqnarray*}
Here $\stackrel{*}{\rightharpoonup}$ denotes the weak-$*$ convergence in $L^\infty\bigl(\R_+;L^2(\Omega)\bigr)$.

On the other hand, thanks to the estimates for the density, we immediately deduce that $\rho_\veps\,\ra\,1$ (strong convergence)
in $L^\infty\bigl(\R_+;L^2(\Omega)\bigr)$, with convergence rate of order $\veps$.
So, we can write $\rho_\veps\,=\,1\,+\,\veps\,r_\veps$, with the family $\bigl(r_\veps\bigr)_\veps$ bounded in
$L^\infty\bigl(\R_+;L^2(\Omega)\bigr)$, and then (up to an extraction) weakly convergent to some $r$ in this space.

Notice that, in the case $\alpha=0$, we know that actually $\bigl(\rho_\veps\bigr)_\veps$ strongly converges
to $1$ in the space $L^\infty\bigl(\R_+;H^1(\Omega)\bigr)\,\cap\,L^2_{loc}\bigl(\R_+;H^2(\Omega)\bigr)$, still
with rate $O(\veps)$. Then we infer also that
\begin{equation} \label{eq:conv-r_a=0}
\qquad\qquad\qquad
r_\veps\,\rightharpoonup\,r\qquad\qquad\qquad\mbox{ in }\quad
L^\infty\bigl(\R_+;H^1(\Omega)\bigr)\,\cap\,L^2_{loc}\bigl(\R_+;H^2(\Omega)\bigr)\,.
\end{equation}
In the case $0<\alpha\leq1$, thanks to \eqref{est:nabla-rho_L^2-L^2} we gather instead that
\begin{equation} \label{eq:conv-r_a}
\qquad\qquad\qquad
r_\veps\,\rightharpoonup\,r\qquad\qquad\qquad\mbox{ in }\quad L^2_{loc}\bigl(\R_+;H^1(\Omega)\bigr)\,.
\end{equation}

Notice also that, as expected, one has $U\,=\,Du$, and then $u\in L^2\bigl(\R_+;H^1(\Omega)\bigr)$.
As a matter of fact, consider equation \eqref{eq:D(rho-u)}: using again the trick
$\rho_\veps\,=\,1+\left(\rho_\veps-1\right)$ together with \eqref{est:rho_L^inf-L^2}, it is easy to check that
$$
D\bigl(\rho_\veps^{3/2}\,u_\veps\bigr)\,\longrightarrow\,Du\qquad\qquad\mbox{ in }\qquad \mc{D}'\,.
$$
On the other hand, the bounds \eqref{est:rho_L^inf-L^2} and \eqref{est:nabla-rho_L^2-L^2} imply that the right-hand side
of \eqref{eq:D(rho-u)} weakly converges to $U$, and this proves our claim.

Let us also point out that
\begin{equation} \label{eq:conv-rho-u}
\qquad\qquad\qquad
\rho_\veps\,u_\veps\,\rightharpoonup\,u\qquad\qquad\qquad\mbox{ in }\quad L^2\bigl([0,T];L^2(\Omega)\bigr)\,.
\end{equation}
In fact, we can write $\rho_\veps u_\veps\,=\,\sqrt{\rho_\veps}u_\veps\,+\,\left(\sqrt{\rho_\veps}-1\right)\sqrt{\rho_\veps}u_\veps$.
By $\left|\sqrt{\rho_\veps}-1\right|\leq\left|\rho_\veps-1\right|$ and Sobolev embeddings, we get that the second term in the right-hand
side converges strongly to $0$ in $L^\infty\bigl([0,T];L^1(\Omega)\cap L^{3/2}(\Omega)\bigr)\,\cap\,L^2\bigl([0,T];L^2(\Omega)\bigr)$.

Exactly in the same way, we find that
\begin{equation} \label{eq:conv-rho-Du}
\qquad\quad
\rho_\veps\,Du_\veps\,\rightharpoonup\,Du\qquad\qquad\qquad\mbox{ in }\quad
L^1\bigl([0,T];L^2(\Omega)\bigr)\,\cap\,L^2\bigl([0,T];L^1(\Omega)\cap L^{3/2}(\Omega)\bigr)\,.
\end{equation}

We conclude this part by proving the following proposition, which can be seen as the analogue of the
Taylor-Proudman theorem in our context.
\begin{prop} \label{p:weak-limit}
Let $\bigl(\rho_\veps,u_\veps\bigr)_\veps$ be a family of weak solutions (in the sense of Definition \ref{d:weak}  above)
to system \eqref{eq:NSK+rot}-\eqref{eq:bc}, with initial data $\bigl(\rho_{0,\veps},u_{0,\veps}\bigr)$ satisfying
the hypotheses fixed in Section \ref{s:result}. \\
Let us define $r_\veps:=\veps^{-1}\left(\rho_\veps-1\right)$, and let $(r,u)$ be a limit point of the sequence
$\bigl(r_\veps,u_\veps\bigr)_\veps$.

Then $r=r(x^h)$ and $u=\bigl(u^h(x^h),0\bigr)$, with $\div_{\!h}u^h=0$. Moreover, $r$ and $u$ are linked by the relation
$$
\left\{\begin{array}{ll}
u^h\;=\,\nabla_h^\perp r & \qquad\mbox{ if }\quad0\,<\,\alpha\,\leq\,1 \\[2ex]
u^h\;=\,\nabla_h^\perp\bigl(\Id-\Delta\bigr)r & \qquad\mbox{ if }\quad\alpha\,=\,0\,.
\end{array}\right.
$$
\end{prop}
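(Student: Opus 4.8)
The plan is to pass to the limit $\veps\to0$ in the weak formulations \eqref{eq:weak-mass} and \eqref{eq:weak-momentum}, keeping only the singular terms (those carrying a negative power of $\veps$), since all the genuinely nonlinear or viscous contributions are $O(1)$ or $o(1)$ by the uniform bounds collected in Subsection \ref{ss:unif-bounds}. The structure of the argument is: first extract the ``vertical'' and ``divergence-free'' constraints from the mass equation and the singular part of the momentum equation at order $\veps^{-1}$ (these give $\partial_3 u^h=0$, $\partial_3 r=0$, $u^3=0$ and $\div_h u^h=0$); then combine the pressure, capillarity and Coriolis terms — all of which scale like $\veps^{-1}$ once written in terms of $r_\veps$ — to obtain the relation between $r$ and $u^h$.

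More precisely, I would first test the mass equation \eqref{eq:weak-mass} against $\veps^{-1}\phi$: since $\rho_\veps-1=\veps r_\veps$ and $\rho_\veps u_\veps\rightharpoonup u$ by \eqref{eq:conv-rho-u}, the $\veps^{-1}$ factor forces $\int_0^T\!\!\int_\Omega u\cdot\nabla\phi\,dx\,dt=0$ in the limit, i.e. $\div u=0$. Testing \eqref{eq:weak-mass} directly (without rescaling) against $\phi$ and using $r_\veps\rightharpoonup r$ gives $\partial_t r+\div u=0$ distributionally, but since $\div u=0$ we actually need the finer information: writing $\div(\rho_\veps u_\veps)=0$ mod $\partial_t$, the leading order already yields $\div u=0$; the next step is to identify from \eqref{eq:weak-momentum} the terms of order $\veps^{-2}$ and $\veps^{-1}$. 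The terms $\frac{\g}{\veps^2(\g+1)}P(\rho_\veps)\rho_\veps\div\psi$ and (for $\alpha=0$) $\frac{1}{\veps^2}\rho_\veps^2\Delta\rho_\veps\div\psi$ are singular of order $\veps^{-1}$ after expanding $\rho_\veps=1+\veps r_\veps$, since $P(1)$, $\Delta 1$ contribute total derivatives that integrate to zero; the genuinely $\veps^{-2}$ part is a gradient and disappears against $\div\psi$ by choosing $\psi$ divergence-free (or is absorbed as a pressure). Testing \eqref{eq:weak-momentum} against $\veps\,\psi$ with $\psi$ of the form $\psi=\nabla^\perp_h\varphi$ or a general smooth compactly supported field kills everything except the Coriolis term $\frac{1}{\veps}e^3\times\rho_\veps^2 u_\veps\cdot\psi$ and the singular pressure/capillarity terms; passing to the limit and integrating by parts produces, in the case $0<\alpha\le1$, the identity $e^3\times u=\nabla_h r$ up to a gradient, whence $u^h=\nabla^\perp_h r$; in the case $\alpha=0$ the capillarity contributes an extra $-\Delta_h r$, giving $u^h=\nabla^\perp_h(\Id-\Delta_h)r$.

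For the vertical-invariance statements: testing \eqref{eq:weak-momentum} against $\veps\,\psi$ with $\psi=(0,0,\psi^3)$ isolates $\partial_3$ of the pressure (and capillarity) term, forcing $\partial_3 r=0$ (for $0<\alpha\le1$) or $\partial_3(\Id-\Delta_h)r=0$ hence $\partial_3 r=0$ (for $\alpha=0$), after using $\partial_3\rho_{\veps|\partial\Omega}=0$ from \eqref{eq:bc} to discard boundary terms. Combined with $\div u=0$ and $u^h=\nabla^\perp_h(\cdot)r$ with $r$ independent of $x^3$, one gets $\div_h u^h=0$ and $\partial_3 u^3=0$; together with $u^3_{|\partial\Omega}=0$ this yields $u^3\equiv0$, and then $u^h$ too is independent of $x^3$. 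The weak-$*$ compactness of $(r_\veps)_\veps$ and $(\sqrt{\rho_\veps}u_\veps)_\veps$ in the relevant spaces, already recorded in Subsection \ref{ss:constraint}, guarantees that these limiting identities hold for a subsequence.

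The main obstacle I anticipate is \emph{bookkeeping of the singular terms in the modified weak formulation} \eqref{eq:weak-momentum}: because the momentum equation is tested against $\rho_\veps\psi$ rather than $\psi$, every singular term appears multiplied by an extra factor of $\rho_\veps$ (or $\rho_\veps^2$), so one must carefully expand $\rho_\veps=1+\veps r_\veps$ and check that the correction terms are indeed $o(1)$ — this uses precisely the sharp bounds \eqref{est:rho_L^inf-L^2}, \eqref{est:rho_L^2-L^inf} and \eqref{est:nabla-rho_L^2-L^2}, and for the capillarity term $\frac{2}{\veps^{2(1-\alpha)}}\rho_\veps\Delta\rho_\veps\nabla\rho_\veps\cdot\psi$ (cubic in $\rho_\veps-1$ derivatives) one needs the uniform $\veps^{-(1-\alpha)}$ control on $\nabla^2\rho_\veps$ to see it vanishes when $\alpha>0$ and survives (reconstituting $\nabla\Delta_h r$) when $\alpha=0$. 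The rest — choosing test functions to project onto the right constraints, integrating by parts, invoking the weak convergences — is routine.
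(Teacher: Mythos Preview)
Your approach is essentially the paper's: multiply the weak momentum equation by $\veps$, expand $\rho_\veps=1+\veps r_\veps$, and pass to the limit so that only the Coriolis, pressure, and (for $\alpha=0$) capillarity contributions survive, yielding $e^3\times u+\nabla\wtilde r=0$ with $\wtilde r=r$ or $\wtilde r=(\Id-\Delta)r$; the vertical independence and $u^3\equiv0$ then follow exactly as you outline.

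Two small inaccuracies are worth fixing. First, in your last paragraph you say the ``cubic'' capillarity term $\frac{2}{\veps^{2(1-\alpha)}}\rho_\veps\Delta\rho_\veps\nabla\rho_\veps\cdot\psi$ survives when $\alpha=0$: it does not. After multiplying by $\veps$ it is $O(\veps)$ even for $\alpha=0$, since $\|\Delta\rho_\veps\|_{L^2_TL^2}$ and $\|\nabla\rho_\veps\|_{L^\infty_TL^2}$ are both $O(\veps)$; the term that survives is the one you correctly flagged earlier, $\frac{1}{\veps^{2}}\rho_\veps^2\Delta\rho_\veps\,\div\psi\to\Delta r\,\div\psi$. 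Second, the third component of the limiting constraint gives $\partial_3\bigl((\Id-\Delta)r\bigr)=0$ with the \emph{full} Laplacian, not $\Delta_h$; one then deduces $\partial_3 r=0$ from the elliptic equation $(\Id-\Delta)\partial_3 r=0$ on $\R^2\times\mathbb T^1$ (by Fourier or energy methods), which is the step the paper spells out.
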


\begin{proof}
Let us consider first the mass equation in the (classical) weak formulation, i.e. \eqref{eq:weak-mass}:
writing $\rho_\veps\,=\,1\,+\,\veps\,r_\veps$ as above, for any $\phi\in\mc{D}\bigl([0,T[\,\times\Omega\bigr)$ we have
$$
-\,\veps\int^T_0\int_\Omega r_\veps\,\d_t\phi\,-\,\int^T_0\int_\Omega\rho_\veps\,u_\veps\cdot\nabla\phi\,=\,
\veps\int_\Omega r_{0,\veps}\,\phi(0)\,.
$$
Letting $\veps\ra0$, we deduce that $\int^T_0\int_\Omega u\cdot\nabla\phi\,=\,0$, which implies
\begin{equation} \label{constr:div}
\div\,u\;\equiv\:0\qquad\qquad\mbox{ almost everywhere in }\qquad [0,T]\times\Omega\,.
\end{equation}

After that, we turn our attention to the (modified) weak formulation of the momentum equation, given by
\eqref{eq:weak-momentum}: we multiply it by $\veps$ and we pass to the limit $\veps\ra0$.
By uniform bounds, it is easy to see that the only integrals which do not go to $0$ are the ones involving the pressure, the rotation
and the capillarity: let us analyse them carefully.

First of all, let us deal with the pressure term: we rewrite it as
\begin{eqnarray*}
\frac{1}{\veps}\int^T_0\int_\Omega\nabla P(\rho_\veps)\cdot\rho_\veps\,\psi & = & 
\frac{1}{\veps}\int^T_0\int_\Omega\nabla P(\rho_\veps)\cdot\left(\rho_\veps-1\right)\,\psi\,+\,
\frac{1}{\veps}\int^T_0\int_\Omega\nabla P(\rho_\veps)\cdot\psi \\
& = & \int^T_0\int_\Omega r_\veps\,\rho_\veps^{\g-1}\,\nabla\rho_\veps\cdot\psi\,+\,
\frac{1}{\veps}\int^T_0\int_\Omega\nabla P(\rho_\veps)\cdot\psi\,.
\end{eqnarray*}
Using the boundedness of $\bigl(r_\veps\bigr)_\veps$ in $L^\infty_T(L^2)$ and the strong convergence of $\nabla\rho_\veps\ra0$
in $L^\infty_T(L^2)$ and (as $0<\g-1\leq1$) of $\rho_\veps^{\g-1}\ra1$ in $L^{2}_T(L^\infty)$, one infers that the former term
of the last equality goes to $0$. The latter, instead, can be rewritten in the following way:
$$
\frac{1}{\veps}\int^T_0\!\!\!\int_\Omega\nabla P(\rho_\veps)\cdot\psi\,=\,-\frac{1}{\veps}\int^T_0\!\!\!\int_\Omega
\biggl(P(\rho_\veps)-P(1)-P'(1)\left(\rho_\veps-1\right)\biggr)\div\psi\,+\,
\frac{1}{\veps}\,P'(1)\int^T_0\!\!\!\int_\Omega\nabla\rho_\veps\cdot\psi\,.
$$
Notice that the quantity $P(\rho_\veps)-P(1)-P'(1)\left(\rho_\veps-1\right)$ coincides, up to a factor $1/(\g-1)$, with the internal
energy $h(\rho_\veps)$: since $h(\rho_\veps)/\veps^2$ is bounded in $L^\infty_T(L^1)$
(see Subsection \ref{ss:unif-bounds}), the first integral tends to $0$ for $\veps\ra0$. Finally, thanks also to bounds 
\eqref{eq:conv-r_a=0} and \eqref{eq:conv-r_a}, we find
$$
\frac{1}{\veps}\int^T_0\int_\Omega\nabla P(\rho_\veps)\cdot\rho_\veps\,\psi\;\longrightarrow\;
\int^T_0\int_\Omega\nabla r\cdot\psi\,.
$$

We now consider the rotation term: by \eqref{eq:conv-rho-u} and the strong convergence $\rho_\veps\ra1$ in $L^\infty_T(L^2)$,
we immediately get
$$
\int^T_0\int_\Omega e^3\times\rho_\veps^2\,u_\veps\cdot\psi\;\longrightarrow\;\int^T_0\int_\Omega e^3\times u\cdot\psi\,.
$$

Finally, we deal with the capillarity terms: on the one hand, thanks to the uniform bounds for 
$\bigl(\veps^{-(1-\alpha)}\,\nabla\rho_\veps\bigr)_\veps$ and $\bigl(\veps^{-(1-\alpha)}\,\nabla^2\rho_\veps\bigr)_\veps$,
we get
$$
\frac{2\,\veps}{\veps^{2(1-\alpha)}}\int^T_0\int_\Omega\rho_\veps\,\Delta\rho_\veps\,\nabla\rho_\veps\cdot\psi
\;\longrightarrow\;0\,.
$$
On the other hand, splitting $\rho^2_\veps\,=\,1+\left(\rho_\veps-1\right)\left(\rho_\veps+1\right)$ and using uniform bounds again,
one easily gets that the quantity
$$
\frac{\veps}{\veps^{2(1-\alpha)}}\int^T_0\int_\Omega\rho_\veps^2\,\Delta\rho_\veps\,\div\psi\,=\,
\frac{\veps^\alpha}{\veps^{1-\alpha}}\int^T_0\int_\Omega\rho_\veps^2\,\Delta\rho_\veps\,\div\psi
$$
converges to $0$ in the case $0<\alpha\leq1$, while it converges to $\int^T_0\int_\Omega\Delta r\,\div\psi$
in the case $\alpha=0$.

Let us now restrict for a while to the case $\alpha=0$. To sum up, in the limit $\veps\ra0$, the equation
for the velocity field (tested against $\veps\,\rho_\veps\,\psi$) gives the constraint
\begin{equation} \label{constr:u-r}
e^3\,\times\,u\,+\,\nabla\wtilde{r}\,=\,0\,,\qquad\qquad\qquad\mbox{ with }\qquad \wtilde{r}\,:=\,r\,-\,\Delta r\,.
\end{equation}
This means that
$$
\begin{cases}
 \d_1\wtilde{r}\,=\,\,u^2 \\
 \d_2\wtilde{r}\,=\,-\,u^1 \\
 \d_3\wtilde{r}\,=\,0\,,
\end{cases}
$$
which immediately implies that $\wtilde{r}\,=\,\wtilde{r}(x^h)$ depends just on the horizontal variables. From this, it follows that also
$u^h\,=\,u^h(x^h)$.

Moreover, from the previous system we easily deduce that
\begin{equation} \label{constr:div_h}
\div_{\!h}\,u^h\,=\,0\,,
\end{equation}
which, together with \eqref{constr:div}, entails that $\d_3u^3\equiv0$. Due to the complete slip boundary conditions, we then infer
that $u^3\equiv0$ almost everywhere in $[0,T]\times\Omega$.
In the end, we have proved that the limit velocity field $u$ is two-dimensional, horizontal and divergence-free.

Finally, let us come back to $r$: by what we have said before, $\d_3r$ fulfills the elliptic equation
$$
-\,\Delta\d_3r\,+\,\d_3r\,=\,0\qquad\qquad\mbox{ in }\qquad \Omega\,.
$$
By passing to Fourier transform in $\R^2\times\mbb{T}^1$, or by energy methods (because $\d_3r\in L^\infty_T(H^1)$), or
by spectral theory (since the Laplace operator has only positive eigenvalues), we find that
\begin{equation} \label{constr:r}
\d_3r\,\equiv\,0\qquad\qquad\Longrightarrow\qquad\qquad r\,=\,r(x^h)\,.
\end{equation}

The same arguments as above also apply when $0<\alpha\leq1$, working with $r$ itself instead of $\wtilde{r}$. Notice that
the property $r=r(x^h)$ is then straightforward, because of the third equation in \eqref{constr:u-r}.

The proposition is now completely proved.
\end{proof}

\section{Vanishing capillarity limit: the case $\alpha=1$} \label{s:low-cap}

In this section we restrict our attention to the vanishing capillarity limit, and we prove
Theorem \ref{th:sing-lim} in the special (and simpler) case $\alpha=1$. In fact, when $0<\alpha<1$ the system
presents an anisotropy in $\veps$, which requires a modification of the arguments of the proof: we refer to
Section \ref{s:general_a} for the analysis.

We first study the propagation of acoustic waves, from which we infer (by use of the RAGE theorem)
the strong convergence of the quantities $\bigl(r_\veps\bigr)_\veps$ and 
$\bigl(\rho^{3/2}_\veps\,u_\veps\bigr)_\veps$ in $L^2_T\bigl(L^2_{loc}(\Omega)\bigr)$. We are then able to pass to the limit
in the weak formulation \eqref{eq:weak-mass}-\eqref{eq:weak-momentum}, and to identify the limit system.

\subsection{Analysis of the acoustic waves} \label{ss:acoustic}

The present paragraph is devoted to the analysis of the acoustic waves.
The main goal is to apply the well-known RAGE theorem to prove dispersion of the components of the solutions which are
orthogonal to the kernel of the  singular perturbation operator.

We shall follow the analysis performed in \cite{F-G-N}.

\subsubsection{The acoustic propagator} \label{sss:propagator}

First of all, we rewrite system \eqref{eq:NSK+rot} in the form
\begin{equation} \label{eq:acoust-waves}
\begin{cases}
\veps\,\d_tr_\veps\,+\,\div\,V_\veps\,=\,0 \\[1ex]
\veps\,\d_tV_\veps\,+\,\Bigl(e^3\times V_\veps\,+\,\nabla r_\veps\Bigr)\,=\,\veps\,f_\veps\,,
\end{cases}
\end{equation}
where we have defined $V_\veps\,:=\,\rho_\veps\,u_\veps$ and
\begin{eqnarray}
f_\veps & := & -\,\div\left(\rho_\veps u_\veps\otimes u_\veps\right)\,+\,\nu\,\div\left(\rho_\veps Du_\veps\right)\,- 
\label{eq:f_veps} \\
& & \qquad -\,\frac{1}{\veps^2}\nabla\Bigl(P(\rho_\veps)-P(1)-P'(1)\left(\rho_\veps-1\right)\Bigr)\,+\,
\rho_\veps\,\nabla\Delta\rho_\veps\,. \nonumber
\end{eqnarray}

Of course, system \eqref{eq:acoust-waves} has to be read in the weak sense specified by Definition \ref{d:weak}: for any scalar
$\phi\in\mc{D}\bigl([0,T[\,\times\Omega\bigr)$ one has
$$
-\,\veps\int^T_0\int_\Omega r_\veps\,\d_t\phi\,dx\,dt\,-\,\int^T_0\int_\Omega V_\veps\cdot\nabla\phi\,dx\,dt\,=\,
\veps\int_\Omega r_{0,\veps}\,\phi(0)\,dx\,,
$$
and, for any $\psi\in\mc{D}\bigl([0,T[\,\times\Omega\bigr)$ with values in $\R^3$,
$$
\int^T_0\!\!\int_\Omega\biggl(-\,\veps\,V_\veps\cdot\d_t\bigl(\rho_\veps\psi\bigr)\,+\,\rho_\veps e^3\times V_\veps\cdot\psi\,-\,
r_\veps\,\div\!\bigl(\rho_\veps\psi\bigr)\biggr)\,=\,\veps\int_\Omega\rho_{0,\veps}^2\,u_{0,\veps}\,\psi(0)\,+\,
\veps\int^T_0\langle f_\veps,\rho_\veps\psi\rangle\,,
$$
where we have set
\begin{eqnarray*}
\langle f_\veps,\zeta\rangle & := & \int_\Omega\biggl(\rho_\veps u_\veps\otimes u_\veps:\nabla\zeta\,-\,
\nu\,\rho_\veps Du_\veps:\nabla\zeta\,-\,\Delta\rho_\veps\,\nabla\rho_\veps\cdot\zeta\,- \\
& & \qquad\qquad\qquad -\,\rho_\veps\,\Delta\rho_\veps\,\div\zeta\,+\,
\frac{1}{\veps^2}\Bigl(P(\rho_\veps)-P(1)-P'(1)\left(\rho_\veps-1\right)\Bigr)\div\zeta\biggr)dx \\
& = & \int_\Omega\Bigl(f^1_\veps:\nabla\zeta\,+\,f^2_\veps:\nabla\zeta\,+\,f^3_\veps\cdot\zeta\,+\,f^4_\veps\,\div\zeta\,+\,
f^5_\veps\,\div\zeta\Bigr)\,dx\,.
\end{eqnarray*}

Since $\bigl(\sqrt{\rho_\veps}u_\veps\bigr)_\veps\subset L^\infty_T(L^2)$ and $f^5_\veps\sim h(\rho_\veps)$,
uniform bounds imply that $\bigl(f^1_\veps\bigr)_\veps$ and $\bigl(f^5_\veps\bigr)_\veps$ are
uniformly bounded in $L^\infty_T(L^1)$.
Since $\nabla\rho_\veps\in L^\infty_T(L^2)$ and $\nabla^2\rho_\veps\in L^2_T(L^2)$ are uniformly bounded,
we get that $\bigl(f^3_\veps\bigr)_\veps\subset L^2_T(L^1)$ is bounded, and so is $\bigl(f^4_\veps\bigr)_\veps$  in
$L^2_T(L^2+L^1)$ (write $\rho_\veps=1+(\rho_\veps-1)$ and use \eqref{est:rho_L^inf-L^2} for the second term). Finally, by writing
$\rho_\veps=\sqrt{\rho_\veps}+(\sqrt{\rho_\veps}-1)\sqrt{\rho_\veps}$ and arguing similarly as for \eqref{eq:conv-rho-Du},
we discover that $\bigl(f^2_\veps\bigr)_\veps$ is bounded in $L^2_T(L^2+L^1)$.

Then we get that $\bigl(f_\veps\bigr)_\veps$ is bounded in $L^2_T\bigl(W^{-1,2}(\Omega)+W^{-1,1}(\Omega)\bigr)$.

\medbreak
This having been established, let us turn our attention to the acoustic propagator, i.e. the operator $\mc{A}$ defined by
$$
\begin{array}{lccc}
\mc{A}\,: & L^2(\Omega)\;\times\;L^2(\Omega) & \longrightarrow & H^{-1}(\Omega)\;\times\;H^{-1}(\Omega) \\
& \bigl(r\;,\;V\bigr) & \mapsto & \Bigl(\div V\;,\;e^3\times V\,+\nabla r\Bigr)\,.
\end{array}
$$
We remark that $\mc{A}$ is skew-adjoint with respect to the $L^2(\Omega)\times L^2(\Omega)$ scalar product.

Notice that, by Proposition \ref{p:weak-limit}, any limit point $\left(r,u\right)$ of the sequence of weak
solutions has to belong to ${\rm Ker}\,\mc{A}$.

Moreover, the following proposition holds true. For the proof, see Subsection 3.1 of \cite{F-G-N}.

\begin{prop} \label{p:A-spec}
Let us denote by $\sigma_p(\mc{A})$ the point spectrum of $\mc{A}$.
Then $\sigma_p(\mc{A})\,=\,\{0\}$.

In particular, if we define by ${\rm Eigen}\,\mc{A}$ the space spanned by the eigenvectors of $\mc{A}$, we have
${\rm Eigen}\,\mc{A}\;\equiv\;{\rm Ker}\,\mc{A}$.
\end{prop}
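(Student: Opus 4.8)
The plan is to compute the spectrum of $\mc{A}$ explicitly by passing to Fourier variables in the horizontal directions and expanding in the vertical Fourier series on $\mbb{T}^1$ (recall Remark \ref{r:period-bc}), which turns $\mc{A}$ into a family of finite-dimensional matrices indexed by the dual frequency variables. Concretely, using the symmetries fixed in Remark \ref{r:period-bc} one writes $r$ as a combination of $\cos(k\pi x^3)\,e^{i\xi^h\cdot x^h}$ and the components $V^1,V^2$ with the same parity while $V^3$ carries $\sin(k\pi x^3)$; for fixed $(\xi^h,k)$ the operator $\mc{A}$ acts on the corresponding four-dimensional space of Fourier coefficients as an explicit matrix $M(\xi^h,k)$.

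First I would treat the frequency $(\xi^h,k)=(0,0)$ separately: there $\div V=0$ reduces to nothing, $\nabla r=0$, and $e^3\times V$ has kernel exactly the vectors with $V^h=0$, so the zero Fourier mode contributes only to $\ker\mc{A}$ and produces the eigenvalue $0$. For $(\xi^h,k)\neq(0,0)$ I would write down $M(\xi^h,k)$ and compute its characteristic polynomial; since $\mc{A}$ is skew-adjoint the eigenvalues are purely imaginary, so it suffices to find the roots of that polynomial. The computation should give eigenvalues $0$ and $\pm i\,\omega(\xi^h,k)$ with $\omega(\xi^h,k)$ a nonzero Poincar\'e-type frequency involving $|\xi^h|$, $|\xi|^2=|\xi^h|^2+k^2\pi^2$ and the $k^2\pi^2$ coming from the vertical mode; the key point is that $\omega(\xi^h,k)\neq 0$ for every $(\xi^h,k)\neq(0,0)$, with the single degenerate direction within each such fiber being the kernel direction (the geostrophic equilibrium).

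The decisive observation for the point spectrum is then: a real number $\lambda$ lies in $\sigma_p(\mc{A})$ iff there is a nonzero $L^2$ element in its eigenspace; by Plancherel this eigenspace is the direct sum over all frequencies of the $\lambda$-eigenspaces of the matrices $M(\xi^h,k)$. If $\lambda=i\mu$ with $\mu\neq 0$, then $M(\xi^h,k)$ has $\lambda$ as an eigenvalue only on the (measure-zero in $\xi^h$, or empty) set $\{\omega(\xi^h,k)=\mu\}$, which cannot support a nonzero $L^2$ function; hence no such $\lambda$ is an eigenvalue. For $\lambda=0$, the kernels $\ker M(\xi^h,k)$ are nontrivial on a full-dimensional set of frequencies, so $0\in\sigma_p(\mc{A})$. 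This yields $\sigma_p(\mc{A})=\{0\}$. The statement about ${\rm Eigen}\,\mc{A}$ is then immediate: since the only eigenvalue is $0$, the closed span of all eigenvectors is exactly $\ker\mc{A}$.

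The main obstacle I anticipate is purely bookkeeping rather than conceptual: one must carefully set up the vertical Fourier expansion so that the boundary conditions \eqref{eq:bc} are automatically encoded (the even/odd extension trick), keep track of which modes are genuinely four-dimensional versus degenerate (e.g.\ the $\xi^h=0$, $k\neq 0$ modes and the $k=0$, $\xi^h\neq 0$ modes), and verify that $\omega(\xi^h,k)$ never vanishes off the origin — this last nonvanishing is exactly the manifestation of the Poincar\'e waves and is what forces the orthogonal complement of $\ker\mc{A}$ to be free of eigenvectors. Since, as the excerpt notes, this is carried out in Subsection 3.1 of \cite{F-G-N}, I would present the Fourier reduction, state the eigenvalue formula, and then invoke the $L^2$/Plancherel argument above to conclude, referring to \cite{F-G-N} for the detailed matrix computation.
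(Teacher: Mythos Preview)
Your proposal is correct and follows essentially the same route as the paper. The paper itself refers the proof to \cite{F-G-N}, but the analogous Propositions \ref{p:A-spec_0} and \ref{p:A-spec_a} are proved in exactly the way you outline: pass to Fourier variables in $\R^2\times\mbb{T}^1$, reduce $\mc{A}$ to a $4\times4$ matrix at each frequency $(\xi^h,k)$, compute the characteristic polynomial (for $\mc{A}$ one finds $\lambda^4+(1+\zeta)\lambda^2+k^2=0$ with $\zeta=|\xi^h|^2+k^2$), and observe that any nonzero root depends nontrivially on the continuous variable $\xi^h$, so that the corresponding level set has Lebesgue measure zero and cannot support a nonzero $L^2$ eigenfunction---precisely your Plancherel argument.
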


\subsubsection{Application of the RAGE theorem} \label{sss:RAGE}

Let us first recall the RAGE theorem and some of its consequences. The present form is the same used in \cite{F-G-N}
(see \cite{Cyc-Fr-K-S}, Theorem 5.8).
\begin{thm} \label{th:RAGE}
Let $\mc H$ be a Hilbert space and $\mc{B}\,:\,D(\mc{B})\subset \mc H\,\longrightarrow\,\mc H$ a self-adjoint operator.
Denote by $\Pi_{\rm cont}$ the orthogonal projection onto the subspace $\mc H_{\rm cont}$, where
$$
\mc H\;=\;\mc H_{\rm cont}\,\oplus\,\oline{{\rm Eigen}\,(\mc{B})}
$$
and $\oline{\Theta}$  is the closure of a subset $\Theta$ in $\mc H$. Finally, let $\mc{K}\,:\,\mc H\,\longrightarrow\,\mc H$
be a compact operator.

Then, in the limit for $T\ra+\infty$ one has
$$
\left\|\frac{1}{T}\,\int^T_0e^{-i\,t\,\mc{B}}\;\mc{K}\;\Pi_{\rm cont}\;e^{i\,t\,\mc{B}}\;dt\right\|_{\mc{L}(\mc H)}\,
\longrightarrow\,0\,.
$$
\end{thm}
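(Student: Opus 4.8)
The plan is to reduce the operator-norm statement to a scalar quadratic average over a single vector and then to apply Wiener's theorem to the spectral measure of that vector. Since $\mc H_{\rm cont}$ reduces $\mc B$, the projection $\Pi_{\rm cont}$ commutes with the unitary group $e^{it\mc B}$. Hence, for unit vectors $\phi,\psi\in\mc H$, using $\langle\phi,e^{-it\mc B}\mc K\,\Pi_{\rm cont}e^{it\mc B}\psi\rangle=\langle e^{it\mc B}\phi,\mc K\,e^{it\mc B}\Pi_{\rm cont}\psi\rangle$ together with Cauchy--Schwarz in the time variable on $[0,T]$,
$$
\left|\frac1T\int_0^T\langle\phi,e^{-it\mc B}\mc K\,\Pi_{\rm cont}\,e^{it\mc B}\psi\rangle\,dt\right|\;\le\;
\left(\frac1T\int_0^T\big\|\mc K\,e^{it\mc B}\,\Pi_{\rm cont}\psi\big\|^2\,dt\right)^{1/2}.
$$
Taking the supremum over $\|\phi\|\le1$, $\|\psi\|\le1$ and writing $\chi=\Pi_{\rm cont}\psi$, the theorem follows once one shows
$$
\sup_{\chi\in\mc H_{\rm cont},\;\|\chi\|\le1}\;\frac1T\int_0^T\big\|\mc K\,e^{it\mc B}\chi\big\|^2\,dt\;\longrightarrow\;0\qquad(T\to+\infty).
$$

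Next I would use the compactness of $\mc K$: given $\veps>0$, choose a finite-rank operator $\mc K_\veps=\sum_{j=1}^N\langle e_j,\cdot\rangle\,f_j$ with $\|\mc K-\mc K_\veps\|<\veps$. Then $\|\mc K\,e^{it\mc B}\chi\|^2\le2\|\mc K_\veps\,e^{it\mc B}\chi\|^2+2\veps^2$, and Cauchy--Schwarz on the finite sum gives $\|\mc K_\veps\,e^{it\mc B}\chi\|^2\le N\sum_{j=1}^N\|f_j\|^2\,|\langle e_j,e^{it\mc B}\chi\rangle|^2$. Since $N$ is fixed, the problem reduces to showing, for each fixed $e\in\mc H$, that $\sup_{\chi\in\mc H_{\rm cont},\,\|\chi\|\le1}\frac1T\int_0^T|\langle e,e^{it\mc B}\chi\rangle|^2\,dt\to0$; letting $\veps\to0$ afterwards absorbs the error term.

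For this last step, write $\frac1T\int_0^T|\langle e,e^{it\mc B}\chi\rangle|^2\,dt=\langle\chi,\Pi_{\rm cont}R_T\Pi_{\rm cont}\chi\rangle$ with $R_T:=\frac1T\int_0^T e^{-it\mc B}\,(e\otimes\overline e)\,e^{it\mc B}\,dt$, where $e\otimes\overline e$ denotes the rank-one operator $v\mapsto\langle e,v\rangle e$; thus the quantity to be controlled equals $\|\Pi_{\rm cont}R_T\Pi_{\rm cont}\|$. Setting $\wtilde e_t:=e^{-it\mc B}\Pi_{\rm cont}e$, the operator $\Pi_{\rm cont}R_T\Pi_{\rm cont}=\frac1T\int_0^T\wtilde e_t\otimes\overline{\wtilde e_t}\,dt$ is positive and trace-class, so that, computing the trace and substituting $\tau=t-s$,
\begin{align*}
\big\|\Pi_{\rm cont}R_T\Pi_{\rm cont}\big\|^2
&\le{\rm Tr}\big((\Pi_{\rm cont}R_T\Pi_{\rm cont})^2\big)
=\frac1{T^2}\int_0^T\!\!\int_0^T\big|\langle\wtilde e_t,\wtilde e_s\rangle\big|^2\,dt\,ds\\
&=\frac1{T^2}\int_{-T}^{T}(T-|\tau|)\,|\varphi(\tau)|^2\,d\tau
\;\le\;\frac1T\int_{-T}^{T}|\varphi(\tau)|^2\,d\tau,
\end{align*}
where $\varphi(\tau):=\langle\Pi_{\rm cont}e,e^{i\tau\mc B}\Pi_{\rm cont}e\rangle=\int_\R e^{i\tau\lambda}\,d\mu(\lambda)$ and $\mu$ is the finite positive spectral measure of $\Pi_{\rm cont}e$ relative to $\mc B$. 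Since $\Pi_{\rm cont}e$ is orthogonal to every eigenspace of $\mc B$, the measure $\mu$ is atomless, so Wiener's theorem yields $\frac1{2T}\int_{-T}^T|\varphi(\tau)|^2\,d\tau\to\sum_{\lambda}|\mu(\{\lambda\})|^2=0$; hence the right-hand side above tends to $0$, which gives the claimed uniform convergence and completes the argument.

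The main obstacle is exactly this last point: getting convergence that is \emph{uniform} over $\chi\in\mc H_{\rm cont}$ with $\|\chi\|\le1$. Applying Wiener's theorem vector by vector only produces pointwise decay at a $\chi$-dependent rate; uniformity is recovered by bounding $\|\cdot\|^2\le{\rm Tr}\big((\cdot)^2\big)$, which collapses the whole family onto the single scalar autocorrelation $\varphi$, independent of $\chi$. The reduction to finite rank (where compactness of $\mc K$ enters essentially) and the commutation of $\Pi_{\rm cont}$ with the unitary group are routine.
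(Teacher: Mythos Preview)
Your proof is correct. However, the paper does not actually give its own proof of this statement: Theorem~\ref{th:RAGE} is merely \emph{recalled} from the literature, with a citation to \cite{Cyc-Fr-K-S}, Theorem~5.8. There is therefore no paper proof to compare against.

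That said, the paper does carry out a closely related argument when proving the parameter-dependent version, Theorem~\ref{th:RAGE_eps}, explicitly following \cite{Cyc-Fr-K-S}. There the organization is slightly different from yours: after reducing to a rank-one $\mc K$, one writes $Q(T):=\frac1T\int_0^T e^{-it\mc B}\mc K\,\Pi_{\rm cont}\,e^{it\mc B}\,dt$ and uses the $C^*$-identity $\|Q(T)\|^2=\|Q(T)Q(T)^*\|$, which leads directly to the double time integral of $|\langle\Pi_{\rm cont}e,e^{i(t-s)\mc B}\Pi_{\rm cont}e\rangle|$ and then to Wiener's theorem. Your route --- the preliminary Cauchy--Schwarz reduction to $\sup_\chi\frac1T\int_0^T\|\mc K e^{it\mc B}\chi\|^2\,dt$, followed by the bound $\|A\|^2\le{\rm Tr}(A^2)$ on the positive trace-class time average --- reaches the same autocorrelation $\varphi(\tau)$ by a slightly different path. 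Both variants are standard; yours has the small advantage that the trace-square inequality makes the passage from pointwise to uniform decay over $\chi$ completely transparent, which is exactly the point you flag in your closing paragraph.
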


Exactly as in \cite{F-G-N}, from the previous theorem we infer the following properties.
\begin{coroll} \label{c:RAGE}
Under the hypothesis of Theorem \ref{th:RAGE}, suppose moreover that $\mc{K}$ is self-adjoint, with $\mc{K}\geq0$.

Then there exists a function $\mu$, with $\mu(\veps)\ra0$ for $\veps\ra0$, such that:
\begin{itemize}
 \item[1)] for any $Y\in\mc H$ and any $T>0$, one has
 $$
 \frac{1}{T}\int^T_0\left\|\mc{K}^{1/2}\,e^{i\,t\,\mc{B}/\veps}\,\Pi_{\rm cont}Y\right\|^2_{\mc H}\,dt\,\leq\,
 \mu(\veps)\,\|Y\|_{\mc H}^2\,;
 $$
 \item[2)] for any $T>0$ and any $X\in L^2\bigl([0,T];\mc H\bigr)$, one has
 $$
 \frac{1}{T^2}\left\|\mc{K}^{1/2}\,\Pi_{\rm cont}\int^t_0e^{i\,(t-\tau)\,\mc{B}/\veps}\,X(\tau)\,d\tau\right\|^2_{L^2([0,T];\mc H)}\,\leq\,
 \mu(\veps)\,\left\|X\right\|^2_{L^2([0,T];\mc H)}\,.
 $$
\end{itemize}
\end{coroll}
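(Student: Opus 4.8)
The plan is to deduce the Corollary directly from Theorem \ref{th:RAGE} by a rescaling in time together with an averaging argument that converts the operator-norm convergence into the two quantitative estimates. First I would address statement 1). Fix $Y\in\mc H$ and perform the change of variables $t\mapsto t/\veps$: since $\mc B$ is self-adjoint, $e^{it\mc B/\veps}$ is unitary, and one has
$$
\frac{1}{T}\int_0^T\left\|\mc K^{1/2}e^{it\mc B/\veps}\Pi_{\rm cont}Y\right\|^2_{\mc H}\,dt\,=\,\frac{\veps}{T}\int_0^{T/\veps}\left\|\mc K^{1/2}e^{is\mc B}\Pi_{\rm cont}Y\right\|^2_{\mc H}\,ds\,.
$$
Now write $\left\|\mc K^{1/2}Z\right\|^2_{\mc H}=\bigl(\mc K Z,Z\bigr)_{\mc H}$ (using $\mc K\geq0$ self-adjoint), so the right-hand side equals $\tfrac{\veps}{T}\int_0^{T/\veps}\bigl(e^{-is\mc B}\mc K\,\Pi_{\rm cont}e^{is\mc B}Y,Y\bigr)_{\mc H}\,ds$, which is bounded by $\|Y\|^2_{\mc H}$ times the operator norm of the time-average $S(R):=\frac1R\int_0^R e^{-is\mc B}\mc K\,\Pi_{\rm cont}e^{is\mc B}\,ds$ with $R=T/\veps$. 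By Theorem \ref{th:RAGE}, $\|S(R)\|_{\mc L(\mc H)}\to0$ as $R\to+\infty$; hence I would set
$$
\mu(\veps)\,:=\,\sup_{R\geq 1/\veps}\|S(R)\|_{\mc L(\mc H)}\,+\,\veps\,,
$$
(the extra $\veps$ handling the regime $T/\veps<1$, where one bounds $S$ crudely by $\|\mc K\|$ and absorbs the factor $\veps$), which is nonincreasing and tends to $0$ as $\veps\to0$, and which dominates $\|S(T/\veps)\|_{\mc L(\mc H)}$ for every $T>0$. Note that in the commonly used version of this corollary one also inserts a cut-off of $\mc B$ to its absolutely continuous part or a truncation in the functional calculus to make $\mc K$ genuinely compact; I would follow \cite{F-G-N} on that technical point and assume $\mc K$ compact as in the hypothesis. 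This proves 1).

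For statement 2), the idea is to reduce the Duhamel-type integral to the already-proved estimate 1) by duality and Fubini. Let $X\in L^2([0,T];\mc H)$ and set $W(t):=\mc K^{1/2}\Pi_{\rm cont}\int_0^t e^{i(t-\tau)\mc B/\veps}X(\tau)\,d\tau$. I would pair $W$ against an arbitrary $\Phi\in L^2([0,T];\mc H)$, use self-adjointness of $\mc K^{1/2}$ and $\Pi_{\rm cont}$ and unitarity of $e^{-i(t-\tau)\mc B/\veps}$ to write
$$
\int_0^T\bigl(W(t),\Phi(t)\bigr)_{\mc H}\,dt\,=\,\int_0^T\!\!\int_\tau^T\Bigl(X(\tau),\,e^{-i(t-\tau)\mc B/\veps}\Pi_{\rm cont}\mc K^{1/2}\Phi(t)\Bigr)_{\mc H}\,dt\,d\tau\,,
$$
after exchanging the order of integration (justified since everything is in $L^2$ on the bounded triangle). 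Then Cauchy--Schwarz in $\tau$ together with $\left|\int_\tau^T(\dots)\,dt\right|\leq \sqrt{T}\,\bigl(\int_0^T\|e^{-i(t-\tau)\mc B/\veps}\Pi_{\rm cont}\mc K^{1/2}\Phi(t)\|^2_{\mc H}\,dt\bigr)^{1/2}$, followed by Fubini again and an application of estimate 1) to the vector $Y=\mc K^{1/2}\Phi(t)$ for each fixed $t$ (noting $\mc K^{1/2}\Phi(t)$ need not be split further because 1) already carries the $\mc K^{1/2}$), yields
$$
\left|\int_0^T\bigl(W,\Phi\bigr)_{\mc H}\,dt\right|\,\leq\,T\,\sqrt{\mu(\veps)}\,\|X\|_{L^2([0,T];\mc H)}\,\|\Phi\|_{L^2([0,T];\mc H)}\,.
$$
Taking the supremum over $\Phi$ of unit norm gives $\|W\|_{L^2([0,T];\mc H)}\leq T\sqrt{\mu(\veps)}\,\|X\|_{L^2([0,T];\mc H)}$, which is exactly 2) after squaring. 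I would, if needed, adjust the definition of $\mu$ by a fixed constant factor so that both 1) and 2) hold with the same $\mu$.

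The main obstacle I anticipate is not the averaging manipulation itself but the bookkeeping that makes Theorem \ref{th:RAGE} genuinely applicable: the RAGE theorem gives decay of $\frac1R\int_0^R e^{-is\mc B}\mc K\,\Pi_{\rm cont}e^{is\mc B}\,ds$ only for \emph{compact} $\mc K$, whereas in applications the operator one wants there (essentially multiplication by a smooth compactly supported cut-off composed with a spectral truncation of the Laplacian) is compact only after such a truncation. Getting the quantitative, $\veps$-uniform statement therefore requires committing to the precise choice of $\mc H$, $\mc B$ and $\mc K$ — which here, following \cite{F-G-N}, will be built from $\mc A$ restricted to a suitable frequency-localized subspace — and checking that the $\mu(\veps)$ produced is truly independent of $T$. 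The measure-theoretic justification of the Fubini exchange and the fact that $t\mapsto\mc K^{1/2}\Phi(t)$ is strongly measurable with the right integrability are routine and I would not dwell on them.
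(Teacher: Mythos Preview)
Your argument for part 1) is correct and is the standard one: rescale to $R=T/\veps$, recognise the time average of $e^{-is\mc B}\mc K\,\Pi_{\rm cont}\,e^{is\mc B}$, and invoke Theorem~\ref{th:RAGE}. (The handling of very small $T$ is a non-issue in applications, where $T$ is fixed while $\veps\to0$.)

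Part 2), however, has a real gap. After you move $\mc K^{1/2}$ onto $\Phi$ by self-adjointness and apply Cauchy--Schwarz in $t$, the integrand you are left with is
\[
\bigl\|e^{-i(t-\tau)\mc B/\veps}\,\Pi_{\rm cont}\,\mc K^{1/2}\Phi(t)\bigr\|_{\mc H}^{2}\,=\,\bigl\|\Pi_{\rm cont}\,\mc K^{1/2}\Phi(t)\bigr\|_{\mc H}^{2}
\]
by unitarity of the group: it is \emph{independent of $\tau$}, so the $\tau$-average carries no smallness. Your proposed ``application of estimate 1) to $Y=\mc K^{1/2}\Phi(t)$'' would require an additional factor of $\mc K^{1/2}$ on the outside and an integral over the variable appearing in the exponential, and neither survives your two Cauchy--Schwarz steps. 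In short, duality puts the compact operator on the wrong side of the propagator to exploit RAGE.

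The fix is to argue directly rather than by duality. By Cauchy--Schwarz in $\tau$,
\[
\|W(t)\|_{\mc H}^{2}\,\leq\,t\int_{0}^{t}\bigl\|\mc K^{1/2}\,e^{i(t-\tau)\mc B/\veps}\,\Pi_{\rm cont}\,X(\tau)\bigr\|_{\mc H}^{2}\,d\tau\,.
\]
Integrate over $t\in[0,T]$, swap the order of integration, and for each fixed $\tau$ substitute $t'=t-\tau$: the inner integral becomes $\int_{0}^{T-\tau}\bigl\|\mc K^{1/2}e^{it'\mc B/\veps}\Pi_{\rm cont}X(\tau)\bigr\|_{\mc H}^{2}\,dt'$, which is bounded by $(T-\tau)\,\mu(\veps)\,\|X(\tau)\|_{\mc H}^{2}$ by part 1) with $Y=X(\tau)$. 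Integrating in $\tau$ then gives exactly the bound $T^{2}\mu(\veps)\|X\|_{L^{2}([0,T];\mc H)}^{2}$. This is the route taken in \cite{F-G-N}, to which the paper defers.
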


We now come back to our problem.
For any fixed $M>0$, define the Hilbert space $H_M$ by
\begin{equation} \label{eq:def-H_M}
H_M\,:=\,\left\{(r,V)\in L^2(\Omega)\times L^2(\Omega)\;\bigl|\;
\what{r}(\xi^h,k)\equiv0\;\mbox{ and }\;\what{V}(\xi^h,k)\equiv0\quad\mbox{if}\quad\bigl|\xi^h\bigr|+|k|>M\right\}\,,
\end{equation}
and let $P_M\,:\,L^2(\Omega)\times L^2(\Omega)\,\longrightarrow\,H_M$ be the orthogonal projection onto $H_M$. For a fixed
$\theta\in\mc{D}(\Omega)$ such that $0\leq\theta\leq1$, we also define the operator
$$
\mc{K}_{M,\theta}(r,V)\,:=\,P_M\bigl(\theta\,P_M(r,V)\bigr)
$$
acting on $H_M$. Note that $\mc{K}_{M,\theta}$ is self-adjoint and positive; moreover, it is also compact by Rellich-Kondrachov
theorem, since its range is included in the set of functions having compact spectrum.

We want to apply the RAGE theorem to
$$
\mc H\,=\,H_M\;,\quad \mc{B}\,=\,i\,\mc{A}\;,\quad \mc{K}\,=\,\mc{K}_{M,\theta}\quad\mbox{ and }\quad\Pi_{\rm cont}\,=\,Q^\perp\,,
$$
where $Q$ and $Q^\perp$ are the orthogonal projections onto respectively ${\rm Ker}\,\mc{A}$ and
$\bigl({\rm Ker}\,\mc{A}\bigr)^\perp$.

Let us set $\left(r_{\veps,M},V_{\veps,M}\right)\,:=\,P_M(r_\veps,V_\veps)$, and note that, thanks to a priori bounds,
for any $M$ it makes sense to apply the term $f_\veps$ to any element of $H_M$. Hence, from system \eqref{eq:acoust-waves} we get
\begin{equation} \label{eq:acoustic-M}
 \veps\,\frac{d}{dt}\bigl(r_{\veps,M}\,,\,V_{\veps,M}\bigr)\,+\,\mc{A}\bigl(r_{\veps,M}\,,\,V_{\veps,M}\bigr)\,=\,
 \veps\,\bigl(0\,,\,f_{\veps,M}\bigr)\,,
\end{equation}
where $\bigl(0\,,\,f_{\veps,M}\bigr)\,\in\,H_M^*\cong H_M$ is defined by
\begin{eqnarray*}
\langle\bigl(0,f_{\veps,M}\bigr)\,,\,\left(s,P_M\bigl(\rho_\veps\,W\bigr)\right) \rangle_{H_M} & := & 
\int_\Omega\Bigl(f^1_\veps:\nabla P_M\bigl(\rho_\veps W\bigr)+
f^2_\veps:\nabla P_M\bigl(\rho_\veps W\bigr)+ \\
& & \; +f^3_\veps\cdot P_M\bigl(\rho_\veps W\bigr)+
f^4_\veps\,\div P_M\bigl(\rho_\veps W\bigr)+f^5_\veps\,\div P_M\bigl(\rho_\veps W\bigr)\Bigr)dx
\end{eqnarray*}
for any $(s,W)\in H_M$. Moreover, by Bernstein inequalities (due to the localization in the phase space)
it is easy to see that, for any $T>0$ fixed and any $W\in H_M$,
\begin{eqnarray*}
\left\|P_M\bigl(\rho_\veps W\bigr)\right\|_{L^2_T(W^{1,\infty}\cap H^1)} & \leq & C(M)\,\left\|\rho_\veps W\right\|_{L^2_T(L^2)} \\
& \leq &
C(M)\,\left(\|W\|_{L^2_T(L^2)}\,+\,\|\rho_\veps-1\|_{L^\infty_T(L^2)}\,\|W\|_{L^2_T(L^\infty)}\right)\,,
\end{eqnarray*}
for some constant $C(M)$ depending only on $M$. This fact, combined with the uniform bounds we established on $f_\veps$, entails
\begin{equation} \label{est:f_eps-M}
\left\|\bigl(0\,,\,f_{\veps,M}\bigr)\right\|_{L^2_T(H_M)}\,\leq\,C(M)\,.
\end{equation}

By use of Duhamel's formula, solutions to equation \eqref{eq:acoustic-M} can be written as
\begin{equation} \label{eq:acoust-int}
\bigl(r_{\veps,M}\,,\,V_{\veps,M}\bigr)(t)\,=\,e^{i\,t\,\mc{B}/\veps}\bigl(r_{\veps,M}\,,\,V_{\veps,M}\bigr)(0)\,+\,
\int^t_0e^{i\,(t-\tau)\,\mc{B}/\veps}\,\bigl(0\,,\,f_{\veps,M}\bigr)\,d\tau\,.
\end{equation}
Note that, by definition (and since $[P_M,Q]=0$),
$$
\left\|\left(\mc{K}_{M,\theta}\right)^{1/2}\,Q^\perp\bigl(r_{\veps,M}\,,\,V_{\veps,M}\bigr)\right\|^2_{H_M}\,=\,
\int_\Omega\theta\,\left|Q^\perp\bigl(r_{\veps,M}\,,\,V_{\veps,M}\bigr)\right|^2\,dx\,.
$$
Therefore, a straightforward application of Corollary \ref{c:RAGE} (recalling also Proposition \ref{p:A-spec}) gives that, for $T>0$ fixed
and for $\veps$ going to $0$,
\begin{equation} \label{conv:ker-ort}
Q^\perp\bigl(r_{\veps,M}\,,\,V_{\veps,M}\bigr)\,\longrightarrow\,0\qquad\mbox{ in }\quad L^2\bigl([0,T]\times K\bigr)
\end{equation}
for any fixed $M>0$ and any compact set $K\subset\Omega$.

On the other hand, applying operator $Q$ to equation \eqref{eq:acoust-int} and differentiating in time, by use also
of bounds \eqref{est:f_eps-M} we discover that (for any $M>0$ fixed) the family
$\bigl(\d_tQ\bigl(r_{\veps,M}\,,\,V_{\veps,M}\bigr)\bigr)_\veps$ is uniformly bounded (with respect to $\veps$)
in the space $L^2_T(H_M)$. Moreover, as $H_M\hookrightarrow H^m$ for any $m\in\N$, we infer also that
it is compactly embedded in $L^2(K)$ for any $M>0$ and any compact subset $K\subset\Omega$. Hence, Ascoli-Arzel\`a theorem
implies that, for $\veps\ra0$,
\begin{equation} \label{conv:ker}
Q\bigl(r_{\veps,M}\,,\,V_{\veps,M}\bigr)\,\longrightarrow\,\bigl(r_M\,,\,u_M\bigr)\qquad\mbox{ in }\quad L^2\bigl([0,T]\times K\bigr)\,.
\end{equation}

\subsection{Passing to the limit} \label{ss:limit}

In the present subsection we conclude the proof of Theorem \ref{th:sing-lim} when $\alpha=1$.
First of all, we show strong convergence of the $r_\veps$'s and the
velocity fields; then we pass to the limit in the weak formulation of the equations, and we identify the limit system.

\subsubsection{Strong convergence of the velocity fields} \label{sss:strong}

The goal of the present paragraph is to prove the following proposition, which will allow us to pass to the limit in the weak
formulation \eqref{eq:weak-mass}-\eqref{eq:weak-momentum} of our system.
\begin{prop} \label{p:strong}
Let $\alpha=1$ and $\g=2$.
For any $T>0$, for $\veps\ra0$ one has, up to extraction of a subsequence, the strong convergences
$$
r_\veps\,\longrightarrow\,r\qquad\mbox{ and }\qquad
\rho_\veps^{3/2}\,u_\veps\,\longrightarrow\,u\qquad\quad\mbox{ in }\quad L^2\bigl([0,T];L^2_{loc}(\Omega)\bigr)\,.
$$
\end{prop}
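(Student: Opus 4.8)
\emph{The plan.} I would prove the proposition by combining the dispersive estimate coming from the RAGE theorem with a compactness argument, after a low/high frequency splitting; the limit will then be forced to coincide with the weak limit $(r,u)$. Fix the threshold $M>0$, recall that $P_M(r_\veps,V_\veps)=(r_{\veps,M},V_{\veps,M})$ and that $[P_M,Q]=0$, and decompose $(r_{\veps,M},V_{\veps,M})=Q(r_{\veps,M},V_{\veps,M})+Q^\perp(r_{\veps,M},V_{\veps,M})$. Invoking \eqref{conv:ker-ort} for the first summand (dispersion of the component orthogonal to $\Ker\mc{A}$) and \eqref{conv:ker} for the second (strong convergence of the component in $\Ker\mc{A}$, obtained by Ascoli's theorem from the uniform time-derivative bound and the Rellich--Kondrachov theorem), I obtain, for every fixed $M$ and every compact $K\subset\Omega$,
\[
P_M\bigl(r_\veps,V_\veps\bigr)\;\longrightarrow\;\bigl(r_M,u_M\bigr)\qquad\text{strongly in }L^2\bigl([0,T]\times K\bigr).
\]
Next, since $\rho_\veps^{3/2}u_\veps-V_\veps=(\sqrt{\rho_\veps}-1)\,V_\veps$, which tends to $0$ in $L^2_T(L^1)$ by \eqref{est:rho_L^inf-L^2} and the uniform bound on $V_\veps=\rho_\veps u_\veps$ in $L^2_T(L^2)$, and since $\|P_Mg\|_{L^2}\le C(M)\,\|g\|_{L^1}$ by Hausdorff--Young and the frequency localisation, I also get $P_M(\rho_\veps^{3/2}u_\veps)\to u_M$ strongly in $L^2([0,T]\times K)$. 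Passing to the limit in $P_M$ along the distributional convergences $r_\veps\to r$ and $\rho_\veps^{3/2}u_\veps\to u$ then identifies $(r_M,u_M)=P_M(r,u)$.

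\emph{High frequencies.} The remaining point is to show that $(\Id-P_M)$ applied to $r_\veps$ and to $\rho_\veps^{3/2}u_\veps$ is small in $L^2_T(L^2_{loc})$, uniformly in $\veps$, as $M\to+\infty$. For the density this is immediate: by \eqref{est:nabla-rho_L^2-L^2} the family $(r_\veps)_\veps$ is bounded in $L^2_T(H^1)$, hence $\|(\Id-P_M)r_\veps\|_{L^2_T(L^2)}\le C_T/M$. For the velocity I expect the genuine difficulty: contrary to the constant-viscosity setting of \cite{F-G-N}, there is no uniform bound on $\nabla u_\veps$, nor even on $u_\veps$ itself. The substitute is the bound on $D(\rho_\veps^{3/2}u_\veps)$ in $L^2_T(L^2+L^{3/2})$ established in Section \ref{s:bounds}: restricting to a compact $K$, on which $L^2+L^{3/2}$ reduces to $L^{3/2}$, a Korn-type inequality together with Sobolev embeddings provides a bound for $\rho_\veps^{3/2}u_\veps$ in $L^2_T(W^{1,3/2}(K'))$, uniform in $\veps$, for a slightly smaller compact $K'$; this yields $\|(\Id-P_M)(\rho_\veps^{3/2}u_\veps)\|_{L^2_T(L^{3/2}(K'))}\lesssim M^{-1}$, which, interpolated with the a priori $L^\infty_T(L^2)$ bound (and the embedding $W^{1,3/2}\hookrightarrow L^3$), upgrades to smallness in $L^2_T(L^2(K'))$, uniformly in $\veps$.

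\emph{Conclusion.} Fix a compact $K\subset\Omega$ and $\delta>0$. Using $r\in L^2_T(H^1)$, $u\in L^2_T(H^1)$ and the two high-frequency estimates above, I choose $M$ so large that $\|(\Id-P_M)r_\veps\|_{L^2_T(L^2(K))}$, $\|(\Id-P_M)(\rho_\veps^{3/2}u_\veps)\|_{L^2_T(L^2(K))}$, $\|(\Id-P_M)r\|_{L^2_T(L^2(K))}$ and $\|(\Id-P_M)u\|_{L^2_T(L^2(K))}$ are all $<\delta$, uniformly in $\veps$. For this fixed $M$, the first paragraph gives $P_M(r_\veps,\rho_\veps^{3/2}u_\veps)\to P_M(r,u)$ strongly in $L^2([0,T]\times K)$ as $\veps\to0$, so the triangle inequality yields $\limsup_{\veps\to0}\bigl(\|r_\veps-r\|_{L^2_T(L^2(K))}+\|\rho_\veps^{3/2}u_\veps-u\|_{L^2_T(L^2(K))}\bigr)\lesssim\delta$; letting $\delta\to0$ and performing a diagonal extraction over an exhaustion of $\Omega$ by compact sets gives the stated strong convergence, the limit being $(r,u)$ by uniqueness of the weak limits. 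As anticipated, the delicate step is the high-frequency control of the velocity: the absence of any uniform estimate on $u_\veps$ forces one to argue through $\rho_\veps^{3/2}u_\veps$, its symmetric gradient in the sum space $L^2+L^{3/2}$, a localisation and an interpolation; everything else runs parallel to \cite{F-G-N}.
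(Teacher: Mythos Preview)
Your overall architecture---the low/high frequency splitting, the combination of \eqref{conv:ker-ort} and \eqref{conv:ker} for the $P_M$ part, and the recourse to the bound $D(\rho_\veps^{3/2}u_\veps)\in L^2_T(L^2+L^{3/2})$ for $(\Id-P_M)$---is exactly the paper's strategy, and your treatment of the low frequencies and of the high-frequency part of $r_\veps$ is correct (up to a harmless swap of the labels \eqref{conv:ker-ort}/\eqref{conv:ker} in your first paragraph).

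The genuine gap is in your high-frequency control of the velocity. You restrict to a compact $K$, apply a local Korn inequality to get $\rho_\veps^{3/2}u_\veps\in L^2_T\bigl(W^{1,3/2}(K')\bigr)$, and then assert
\[
\bigl\|(\Id-P_M)(\rho_\veps^{3/2}u_\veps)\bigr\|_{L^2_T(L^{3/2}(K'))}\;\lesssim\;M^{-1}\,.
\]
But $P_M$ is a \emph{global} Fourier multiplier (here even a sharp cutoff), so a bound on $f$ that holds only in $W^{1,3/2}(K')$ gives no control on $(\Id-P_M)f$ restricted to $K'$: the convolution kernel of $P_M$ smears the values of $f$ outside $K'$ back onto $K'$, and the only global information you have on $\rho_\veps^{3/2}u_\veps$ is an $L^2$ bound, not a $W^{1,3/2}$ one. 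Your subsequent interpolation $L^{3/2}$--$L^3$ inherits the same defect. One could attempt a repair via a cutoff $\chi$ and a commutator estimate for $[(\Id-P_M),\chi]$, but with a sharp spectral projector this is delicate, and in any case it is not what you wrote.

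The paper bypasses the issue by staying global from the start: since $D(\rho_\veps^{3/2}u_\veps)\in L^2_T(L^2+L^{3/2})$ on all of $\Omega$, one applies Lemma~\ref{l:density}(iii) (with an implicit block-wise Korn step to pass from $D$ to $\nabla$) together with the embedding $L^{p'}\hookrightarrow B^0_{p',\infty}$ furnished by Proposition~\ref{p:emb_hom-besov}, for $p'=2$ and $p'=3/2$, and obtains directly
\[
\bigl\|(\Id-P_M)(\rho_\veps^{3/2}u_\veps)\bigr\|_{L^2_T(L^2(\Omega))}\;\leq\;C_M\;\longrightarrow\;0\qquad(M\to\infty),
\]
uniformly in $\veps$. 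No restriction to compacts, no local Korn, no $L^{3/2}$--$L^3$ interpolation are needed. Your localisation is precisely what creates the difficulty; drop it and run the high-frequency estimate on the whole slab.
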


\begin{proof}
We start by decomposing $\rho^{3/2}_\veps\,u_\veps$ into low and high frequencies: namely, for any $M>0$ fixed, we can write
$$
\rho^{3/2}_\veps\,u_\veps\,=\,P_M\bigl(\rho^{3/2}_\veps\,u_\veps\bigr)\,+\,\left(\Id-P_M\right)\bigl(\rho^{3/2}_\veps\,u_\veps\bigr)\,.
$$

Let us consider the low frequencies term first: again, it can be separated into the sum of two pieces, namely
$$
P_M\bigl(\rho^{3/2}_\veps\,u_\veps\bigr)\,=\,\veps\,P_M\bigl(\veps^{-1}\left(\sqrt{\rho_\veps}-1\right)\,\rho_\veps\,u_\veps\bigr)\,+\,
P_M\bigl(\rho_\veps\,u_\veps\bigr)\,.
$$
By uniform bounds (recall also Subsection \ref{ss:constraint}), we have that $\bigl(\rho_\veps\,u_\veps\bigr)_\veps$ is bounded in
$L^2_T(L^2)$, while $\bigl(\veps^{-1}\left(\sqrt{\rho_\veps}-1\right)\bigr)_\veps$ is clearly bounded in $L^\infty_T(L^2)$. Then,
using also Bernstein's inequalities, we infer that the former item in the previous equality goes to $0$ in $L^2_T(L^2)$,
in the limit for $\veps\ra0$.

On the other hand, by properties \eqref{conv:ker-ort} and \eqref{conv:ker} we immediately get that
$P_M\bigl(\rho_\veps\,u_\veps\bigr)$ converges to $u_M=P_M(u)$ strongly $L^2_T(L^2_{loc})$.
Recall that $u$ is the limit velocity field identified in Subsection \ref{ss:constraint}, and which has to satisfy, together with
$r$, the constraints given in Proposition \ref{p:weak-limit}.

We deal now with the high frequencies term. Recall that, by decomposition \eqref{eq:D(rho-u)}, we have already deduced
the uniform inclusion $\Bigl(D\bigl(\rho_\veps^{3/2}\,u_\veps\bigr)\Bigr)_\veps\,\subset\,L^2_T(L^2+L^{3/2})$.
Then by Lemma \ref{l:density} and Proposition \ref{p:emb_hom-besov}, we get
$$
\left\|\left(\Id-P_M\right)\bigl(\rho^{3/2}_\veps\,u_\veps\bigr)\right\|_{L^2_T(L^2)}\,\leq\,c_M\,,
$$
for some constant $c_M$, depending just on $M$ (and not on $\veps$) and which tends to $0$ for $M\ra+\infty$.

For the convergence of $r_\veps$ to $r$ one can argue in an analogous way. The control of the high frequency part is actually
easier, thanks to \eqref{eq:conv-r_a}. For the low frequencies, we decompose again $P_Mr_\veps=QP_Mr_\veps+Q^\perp P_Mr_\veps$,
for which we use \eqref{conv:ker} and \eqref{conv:ker-ort} respectively .

The proposition is then proved.
\end{proof}

\subsubsection{The limit system} \label{sss:limit-system}

Thanks to the convergence properties established in Subsection \ref{ss:constraint} and by Proposition \ref{p:strong},
we can pass to the limit in the weak formulation \eqref{eq:weak-mass}-\eqref{eq:weak-momentum}.
For this, we evaluate the equations on an element which already belongs to ${\rm Ker}\,\mc{A}$.

So, let us take $\phi\in\mc{D}\bigl([0,T[\,\times\Omega\bigr)$, with $\phi=\phi(x^h)$, and use
$\psi\,=\,\left(\nabla^\perp_h\phi,0\right)$ as a test function in equation \eqref{eq:weak-momentum}: since $\div\psi=0$, we get
\begin{eqnarray}
& & \hspace{-0.1cm} \int^T_0\int_\Omega\biggl(-\rho_\veps^2\,u_\veps\cdot\d_t\psi\,-\,\rho_\veps u_\veps\otimes\rho_\veps u_\veps:\nabla\psi\,+\,
\rho_\veps^2\left(u_\veps\cdot\psi\right)\div u_\veps\,+\,\frac{1}{\veps}\,e^3\times\rho_\veps^2u_\veps\cdot\psi\,+ \label{eq:weak-limit} \\
& & +\,\nu\rho_\veps Du_\veps:\rho_\veps\nabla\psi\,+\,\nu\rho_\veps Du_\veps:\left(\psi\otimes\nabla\rho_\veps\right)\,+\,
2\,\rho_\veps\Delta\rho_\veps\nabla\rho_\veps\cdot\psi\biggr)\,dx\,dt\,=\,
\int_\Omega\rho_{0,\veps}^2\,u_{0,\veps}\cdot\psi(0)\,dx\,. \nonumber
\end{eqnarray}

Now, we rewrite the rotation term in the following way:
\begin{eqnarray*}
\frac{1}{\veps}\int^T_0\!\!\int_\Omega e^3\times\rho_\veps^2u_\veps\cdot\psi & = & 
\frac{1}{\veps}\int^T_0\!\!\int_\Omega\rho_\veps^2u^h_\veps\cdot\nabla_h\phi\;=\;\frac{1}{\veps}\int^T_0\!\!\int_\Omega\rho_\veps
u^h_\veps\cdot\nabla_h\phi\,+\,\int^T_0\!\!\int_\Omega r_\veps \rho_\veps u^h_\veps\cdot\nabla_h\phi \\
& = & -\,\int_\Omega r_{0,\veps}\,\phi(0)\,-\,\int^T_0\!\!\int_\Omega r_\veps\,\d_t\phi\,+\,\int^T_0\!\!\int_\Omega r_\veps\,\rho_\veps\,
u^h_\veps\cdot\nabla_h\phi\,,
\end{eqnarray*}
where the last equality comes from the mass equation \eqref{eq:weak-mass} tested on $\phi$.
Notice that the last term in the right-hand side converges, due to \eqref{eq:conv-rho-u} and
the strong convergence of $r_\veps$ in $L^2_T(L^2)$ (which is guaranteed by Proposition \ref{p:strong}).

Using again the trick $\rho_\veps=1+(\rho_\veps-1)$, we can also write
$$
2\int^T_0\int_\Omega\rho_\veps\Delta\rho_\veps\nabla\rho_\veps\cdot\psi\,=\,2\int^T_0\int_\Omega\Delta\rho_\veps\,
\nabla\rho_\veps\cdot\psi\,+\,2\int^T_0\int_\Omega\left(\rho_\veps-1\right)\,\Delta\rho_\veps\,\nabla\rho_\veps\cdot\psi\,:
$$
by uniform bounds and \eqref{est:nabla-rho_L^2-L^2}, it is easy to see that both terms goes to $0$ for $\veps\ra0$.

Putting these last two relations into \eqref{eq:weak-limit} and using convergence properties established above in order
to pass to the limit, we arrive at the equation
\begin{eqnarray}
& & \hspace{-0.2cm} \int^T_0\int_\Omega\biggl(-\,u\cdot\d_t\psi\,-\,u\otimes u:\nabla\psi\,-\,r\,\d_t\phi\,+ \label{eq:weak} \\
& & \qquad\qquad\qquad\qquad\qquad
+\,r\,u^h\cdot\nabla_h\phi\,+\,\nu Du:\nabla\psi\biggr)\,dx\,dt\,=\,
\int_\Omega\bigl(u_0\cdot\psi(0)\,+\,r_0\,\phi(0)\bigr)\,dx\,. \nonumber
\end{eqnarray}

Now we use that $\psi=\bigl(\nabla_h^\perp\phi,0\bigr)$ and that, by Proposition \ref{p:weak-limit}, $u=\bigl(\nabla^\perp_hr,0\bigr)$.
Keeping in mind that all these functions don't depend on $x^3$, by integration by parts we get
\begin{eqnarray*}
-\int^T_0\int_\Omega u\cdot\d_t\psi\,dx\,dt & = & \int^T_0\int_{\R^2}\Delta_hr\,\d_t\phi\,dx^h\,dt \\
-\int^T_0\int_\Omega u\otimes u:\nabla\psi\,dx\,dt & = & -\int^T_0\int_{\R^2}\nabla^\perp_hr\cdot\nabla_h\Delta_hr\,
\phi\,dx^h\,dt \\
\nu\int^T_0\int_\Omega Du:\nabla\psi\,dx\,dt & = & \frac{\nu}{2}\int^T_0\int_{\R^2}\Delta_h^2r\,\phi\,dx^h\,dt\,.
\end{eqnarray*}
In the same way, one can show that the following identity holds true:
$$
\int^T_0\int_\Omega r\,u^h\cdot\nabla_h\phi\,dx\,dt\,=\,\int^T_0\int_{\R^2}\nabla_hr\cdot\nabla^\perp_hr\,\phi\,dx^h\,dt\,=\,0\,.
$$

Putting all these equalities together completes the proof of Theorem \ref{th:sing-lim} in the case $\alpha=1$.

\section{The case $\alpha=0$} \label{s:alpha=0}

We consider now the case of constant capillarity coefficient, i.e. $\alpha=0$: the present section is devoted to the
proof of Theorem \ref{th:alpha=0}.

The main issue of the analysis here is that, now, the singular perturbation operator becomes
\begin{equation} \label{eq:A_0}
\begin{array}{lccc}
\mc{A}_0\,: & L^2(\Omega)\;\times\;L^2(\Omega) & \longrightarrow & H^{-1}(\Omega)\;\times\;H^{-3}(\Omega) \\
& \bigl(\,r\;,\;V\,\bigr) & \mapsto & \Bigl(\div V\;,\;e^3\times V\,+\nabla\bigl(\Id-\Delta\bigr)r\Bigr)\,,
\end{array}
\end{equation}
which is no more skew-adjoint with respect to the usual $L^2$ scalar product. 
Nonetheless, it is possible to symmetrize our system, i.e. it is possible to find a scalar product
on the space $H_M$, defined in \eqref{eq:def-H_M}, with respect to which the operator $\mc{A}_0$ becomes
skew-adjoint.

Indeed, passing in Fourier variables, one can easily compute a positive self-adjoint $4\times4$
matrix $\mc S$ such that $\mc S\,\mc{A}_0\,=\,-\,\mc{A}_0^*\,\mc S$, which is exactly the condition for $\mc{A}_0$ to be skew-adjoint
with respect to the scalar product defined by $\mc S$. Hence, one can apply the RAGE theorem machinery to $\mc{A}_0$, acting on $H_M$
endowed with the scalar product $\mc S$.

\medbreak
After this brief introduction, let us go back to the proof of Theorem \ref{th:alpha=0}.
As before, we first analyse the acoustic waves, proving strong convergence for the velocity fields, and then we will
pass to the limit.

\subsection{Propagation of acoustic waves} \label{ss:acoustic_0}

In the case $\alpha=0$, the equation for acoustic waves is the same as \eqref{eq:acoust-waves}, with operator $\mc{A}$
replaced by $\mc{A}_0$: namely, we have
\begin{equation} \label{eq:ac-waves_0}
\begin{cases}
\veps\,\d_tr_\veps\,+\,\div\,V_\veps\,=\,0 \\[1ex]
\veps\,\d_tV_\veps\,+\,\Bigl(e^3\times V_\veps\,+\,\nabla\bigl(\Id-\Delta\bigr)r_\veps\Bigr)\,=\,\veps\,\wtilde{f}_\veps\,,
\end{cases}
\end{equation}
where $\wtilde{f}_\veps$ is analogous to $f_\veps$, which was defined in Paragraph \ref{sss:propagator},
but with the last term of formula \eqref{eq:f_veps} replaced by
$$
\frac{1}{\veps^2}\,\bigl(\rho_\veps-1\bigr)\,\nabla\Delta\rho_\veps\,.
$$
In particular, we have
\begin{eqnarray*}
\langle\wtilde{f}_\veps,\phi\rangle & := & \int_\Omega\biggl(\rho_\veps u_\veps\otimes u_\veps:\nabla\phi\,-\,
\nu\,\rho_\veps Du_\veps:\nabla\phi\,-\,\frac{1}{\veps^2}\,\Delta\rho_\veps\,\nabla\rho_\veps\cdot\phi\,- \\
& & \qquad\quad -\,\frac{1}{\veps^2}\,\bigl(\rho_\veps-1\bigr)\,\Delta\rho_\veps\,\div\phi\,+\,
\frac{1}{\veps^2}\Bigl(P(\rho_\veps)-P(1)-P'(1)\left(\rho_\veps-1\right)\Bigr)\div\phi\biggr)dx\,.
\end{eqnarray*}
Exactly as before, by uniform bounds we get that $\bigl(\wtilde{f}_\veps\bigr)_\veps$ is bounded in
$L^2_T\bigl(W^{-1,2}(\Omega)+W^{-1,1}(\Omega)\bigr)$.

Recall also that, as in the previous case, equations \eqref{eq:ac-waves_0} hold true when computed on test
functions of the form $(\vphi\,,\,\rho_\veps\,\psi)$.

Let us turn our attention to the acoustic propagator $\mc{A}_0$, defined in \eqref{eq:A_0}. We have the following statement,
which is the analogous of Proposition \ref{p:A-spec}.
\begin{prop} \label{p:A-spec_0}
One has $\sigma_p(\mc{A}_0)\,=\,\{0\}$.
In particular, ${\rm Eigen}\,\mc{A}_0\;\equiv\;{\rm Ker}\,\mc{A}_0$.
\end{prop}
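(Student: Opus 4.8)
The plan is to compute the point spectrum of $\mc{A}_0$ explicitly by passing to Fourier variables, exactly in the spirit of the analysis in Subsection 3.1 of \cite{F-G-N} for the operator $\mc{A}$. Recall that on $\Omega=\R^2\times\mbb{T}^1$ we may use the Fourier transform in $x^h\in\R^2$ and Fourier series in $x^3\in\mbb{T}^1$; writing the dual variable as $(\xi^h,k)\in\R^2\times\Z$, the operator $\mc{A}_0$ acts on each Fourier mode as multiplication by the $4\times 4$ matrix
\begin{equation*}
\mc{A}_0(\xi^h,k)\,=\,
\begin{pmatrix}
0 & i\,\xi^1 & i\,\xi^2 & i\,k \\
i\,\bigl(1+|\xi^h|^2+k^2\bigr)\,\xi^1 & 0 & -1 & 0 \\
i\,\bigl(1+|\xi^h|^2+k^2\bigr)\,\xi^2 & 1 & 0 & 0 \\
i\,\bigl(1+|\xi^h|^2+k^2\bigr)\,k & 0 & 0 & 0
\end{pmatrix}\,,
\end{equation*}
where the first component corresponds to $r$ and the last three to $V=(V^1,V^2,V^3)$. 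I would then simply diagonalise this symbol: solving $\det\bigl(\mc{A}_0(\xi^h,k)-\lambda\,\Id\bigr)=0$ gives $\lambda=0$ (with the expected multiplicity, corresponding to $\operatorname{Ker}$) together with two further nonzero purely imaginary eigenvalues $\pm\,i\,\omega(\xi^h,k)$, where $\omega(\xi^h,k)^2=(1+|\xi^h|^2+k^2)|\xi^h|^2+\text{(Coriolis contribution)}$ — the precise value is a short computation and mirrors the dispersion relation found in \cite{F-G-N}, the only change being the extra factor $(1+|\xi^h|^2+k^2)$ coming from the $\nabla(\Id-\Delta)$ instead of $\nabla$.

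Next I would argue that none of these nonzero symbol-eigenvalues can produce an eigenvalue of the operator $\mc{A}_0$ on $H_M$ (or on $L^2\times L^2$). The point is that an eigenfunction associated to a value $\lambda\in i\R\setminus\{0\}$ would have its Fourier transform supported on the set $\{(\xi^h,k):\omega(\xi^h,k)=|\lambda|\}$; since $\omega$ is a nonconstant real-analytic function of $(\xi^h,k)$, this level set has zero Lebesgue measure in the $\xi^h$ variables for every fixed $k\in\Z$, hence an $L^2$ function with spectrum in it must vanish identically. This is exactly the mechanism used in \cite{F-G-N}, and it shows $\sigma_p(\mc{A}_0)\subset\{0\}$. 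Conversely, $0$ is an eigenvalue: $\operatorname{Ker}\mc{A}_0$ is nontrivial (it contains, e.g., all pairs $(r,V)$ with $V=\bigl(\nabla_h^\perp(\Id-\Delta)r,0\bigr)$, $r=r(x^h)$, as identified in Proposition \ref{p:weak-limit}), so $\sigma_p(\mc{A}_0)=\{0\}$. The statement $\operatorname{Eigen}\mc{A}_0\equiv\operatorname{Ker}\mc{A}_0$ is then immediate, since the only eigenvalue is $0$ and $\operatorname{Eigen}$ is by definition the span of all eigenvectors.

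I expect the only mildly delicate point to be making the "zero-measure level set $\Rightarrow$ trivial function" argument clean, i.e. checking that $\omega$ is genuinely nonconstant on each slice $\{k=\text{const}\}$ and handling the mode $k=0$ separately (there the purely horizontal, $x^3$-independent part is precisely the kernel, and on its orthogonal complement $\omega\neq 0$). One should also be a little careful that $\mc{A}_0$, not being $L^2$-skew-adjoint, still has the resolvent/Fourier-multiplier structure that legitimises reducing the eigenvalue equation to the pointwise symbol equation — but this is routine since $\mc{A}_0$ is a Fourier multiplier with locally bounded symbol and we work on $H_M$, where everything is smooth and compactly supported in frequency. Since the paper explicitly defers to \cite{F-G-N} for the proof of the analogous Proposition \ref{p:A-spec}, I would likewise keep the exposition brief, emphasising only the modification of the dispersion relation induced by the capillarity term.
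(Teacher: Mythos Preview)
Your approach is exactly the paper's: pass to Fourier variables, compute the characteristic polynomial of the symbol, and argue that the nonzero symbol-eigenvalues, being nonconstant in $\xi^h$, cannot give rise to $L^2$ eigenfunctions. The paper carries this out tersely, deriving the quartic
\[
\lambda^4\,+\,\bigl(1+\zeta+\zeta^2\bigr)\,\lambda^2\,+\,k^2\,(1+\zeta)\,=\,0\,,\qquad \zeta:=|\xi^h|^2+k^2\,,
\]
and observing that the only way to kill the $\xi^h$-dependence is to take $k=0$, which forces $\lambda=0$.

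One correction to your sketch: the structure ``$\lambda=0$ plus two nonzero eigenvalues $\pm i\omega$'' is not what you get. When $k\neq0$ the constant term $k^2(1+\zeta)$ is nonzero, so $\lambda=0$ is \emph{not} a root of the symbol's characteristic polynomial; you get two distinct negative values of $\lambda^2$ and hence four nonzero purely imaginary eigenvalues. The double root $\lambda=0$ appears only on the slice $k=0$. This does not harm your argument --- if anything it simplifies the ``$k\neq0$'' part --- but your guessed dispersion relation $\omega^2=(1+|\xi^h|^2+k^2)|\xi^h|^2+\text{(Coriolis)}$ is off, and the actual computation is worth writing out rather than deferring to \cite{F-G-N}, since this is precisely where the capillarity term changes things.
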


\begin{proof}
We have to look for $\lambda\in\C$ for which the following system
$$
\begin{cases}
\div V\;=\;\lambda\,r \\[1ex]
e^3\times V\,+\,\nabla(r\,-\,\Delta r)\;=\;\lambda\,V
\end{cases}
$$
has non-trivial solutions $(r,V)\neq(0,0)$.

Denoting by $\what{v}$ the Fourier transform of a function $v$ in the domain $\R^2\times\mbb{T}^1$, defined
for any $(\xi^h,k)\in\R^2\times\Z$ by the formula
$$
\what{v}(\xi^h,k)\,:=\,\frac{1}{\sqrt{2}}\int_{-1}^{1}\int_{\R^2}e^{-i\,x^h\cdot\xi^h}\,v(x^h,x^3)\,dx^h\;e^{-i\,x^3k}\,dx^3\,,
$$
we can write the previous system in the equivalent way
$$
\begin{cases}
i\left(\xi^h\cdot\what{V}^h\,+\,k\,\what{V}^3\right)\;=\;\lambda\,\what{r} \\[1ex]
-\what{V}^2\,+\,i\,\xi^1\left(1\,+\,\left|\xi^h\right|^2\,+\,k^2\right)\what{r}\;=\;\lambda\,\what{V}^1 \\[1ex]
\what{V}^1\,+\,i\,\xi^2\left(1\,+\,\left|\xi^h\right|^2\,+\,k^2\right)\what{r}\;=\;\lambda\,\what{V}^2 \\[1ex]
\,i\,k\left(1\,+\,\left|\xi^h\right|^2\,+\,k^2\right)\what{r}\;=\;\lambda\,\what{V}^3\,,
\end{cases}
$$
where $\left|\xi^h\right|^2\,=\,\left|\xi^1\right|^2\,+\,\left|\xi^2\right|^2$.
For notation convenience, let us set $\zeta\bigl(\xi^h,k\bigr)=\left|\xi^h\right|^2+k^2$: after easy computations,
we arrive to the following equation for $\lambda$,
$$
\lambda^4\,+\,\left(1\,+\,\zeta\bigl(\xi^h,k\bigr)\,+\,\zeta^2\bigl(\xi^h,k\bigr)\right)\,\lambda^2\,+\,
k^2\,\left(1\,+\,\zeta\bigl(\xi^h,k\bigr)\right)\,=\,0\,,
$$
from which we immediately infer that
$$
\lambda^2\,=\,-\,\frac{1}{2}\left(1+\zeta+\zeta^2\pm\sqrt{\left(1+\zeta+\zeta^2\right)^2\,-\,4\,k^2\,(1+\zeta)}\right)\,.
$$
To have $\lambda$ in the discrete spectrum of $\mc{A}_0$, we need to delete its dependence on $\xi^h$: since $1+\zeta>0$, the
only way to do it is to have $k=0$, for which $\lambda=0$.
\end{proof}

Now, for any fixed $M>0$, we consider the space $H_M$, which was defined in \eqref{eq:def-H_M}, endowed with the scalar product
\begin{equation} \label{eq:scalar-prod}
\langle (r_1,V_1)\,,\,(r_2,V_2)\rangle_{H_M}\,:=\,\langle r_1\,,\,(\Id-\Delta)r_2\rangle_{L^2}\,+\,\langle V_1\,,\,V_2\rangle_{L^2}\,.
\end{equation}
In fact, it is easy to verify that the previous bilinear form is symmetric and positive definite. Moreover, we observe that
$\|(r,V)\|^2_{H_M}\,=\,\left\|(\Id-\Delta)^{1/2}r\right\|^2_{L^2}+\left\|V\right\|^2_{L^2}$.
Straightforward computations also show that $\mc{A}_0$ is skew-adjoint with respect to this scalar product, namely
$$
\langle \mc{A}_0(r_1,V_1)\,,\,(r_2,V_2)\rangle_{H_M}\,=\,-\,\langle (r_1,V_1)\,,\,\mc{A}_0(r_2,V_2)\rangle_{H_M}\,.
$$

Now, let us set $P_M\,:\,L^2(\Omega)\times L^2(\Omega)\,\longrightarrow\,H_M$ to  be the orthogonal projection onto $H_M$, as in
Paragraph \ref{sss:RAGE}. This time, for a fixed $\theta\in\mc{D}(\Omega)$ such that $0\leq\theta\leq1$, we define the operator
$$
\wtilde{\mc{K}}_{M,\theta}(r,V)\,:=\,\Bigl(\bigl(\Id-\Delta\bigr)^{-1}P_M\bigl(\theta\,P_Mr\bigr)\,,\,P_M\bigl(\theta\,P_MV\bigr)\Bigr)\,.
$$
Note that $\wtilde{\mc{K}}_{M,\theta}$ is self-adjoint and positive with respect to the scalar product
$\langle\,\cdot\,,\,\cdot\,\rangle_{H_M}$. Moreover, as before, it is compact by Rellich-Kondrachov
theorem.

Now, exactly as done in Paragraph \ref{sss:RAGE}, we apply the RAGE theorem to
$$
\mc H\,=\,H_M\;,\quad \mc{B}\,=\,i\,\mc{A}_0\;,\quad \mc{K}\,=\,\wtilde{\mc{K}}_{M,\theta}\quad\mbox{ and }\quad
\Pi_{\rm cont}\,=\,Q^\perp\,,
$$
where $Q$ and $Q^\perp$ are still the orthogonal projections onto respectively ${\rm Ker}\,\mc{A}_0$ and
$\bigl({\rm Ker}\,\mc{A}_0\bigr)^\perp$.

Since, with our definitions, we still have
$$
\left\|\left(\wtilde{\mc{K}}_{M,\theta}\right)^{1/2}\,Q^\perp\bigl(r_{\veps,M}\,,\,V_{\veps,M}\bigr)\right\|^2_{H_M}\,=\,
\int_\Omega\theta\,\left|Q^\perp\bigl(r_{\veps,M}\,,\,V_{\veps,M}\bigr)\right|^2\,dx\,,
$$
a direct application of the RAGE theorem (or better of Corollary \ref{c:RAGE}) immediately gives us
\begin{equation} \label{conv:ker-ort_0}
Q^\perp\bigl(r_{\veps,M}\,,\,V_{\veps,M}\bigr)\,\longrightarrow\,0\qquad\mbox{ in }\quad L^2\bigl([0,T]\times K\bigr)
\end{equation}
for any fixed $M>0$ and any compact $K\subset\Omega$.

On the other hand, exactly as we did in Paragraph \ref{sss:RAGE}, by Ascoli-Arzel\`a theorem we can deduce the 
strong convergence
\begin{equation} \label{conv:ker_0}
Q\bigl(r_{\veps,M}\,,\,V_{\veps,M}\bigr)\,\longrightarrow\,\bigl(r_M\,,\,u_M\bigr)\qquad\mbox{ in }\quad L^2\bigl([0,T]\times K\bigr)\,.
\end{equation}

\subsection{Passing to the limit} \label{ss:limit_0}

Thanks to relations \eqref{conv:ker-ort_0} and \eqref{conv:ker_0}, Proposition \ref{p:strong} still holds true: namely,
we have the strong convergences of
$$
r_\veps\,\longrightarrow\,r\qquad\mbox{ and }\qquad
\rho_\veps^{3/2}\,u_\veps\;\longrightarrow\;u\qquad\qquad\mbox{ in }\qquad L^2\bigl([0,T];L^2_{loc}(\Omega)\bigr)\,,
$$
and this allows us to pass to the limit in the non-linear terms. Note that we get in particular the strong convergence
of $\bigl(\nabla r_\veps\bigr)_\veps$ in $L^2_T\bigl(H^{-1}_{loc}\bigr)$ (up to extraction of a subsequence);
on the other hand, by uniform bounds we know that this family is bounded in $L^2_T\bigl(H^1\bigr)$. Then, by interpolation
we have also the strong convergence in all the intermediate spaces, and especially
\begin{equation} \label{conv:strong_delta-rho}
\nabla r_\veps\,\longrightarrow\,\nabla r\qquad\qquad\mbox{ in }\qquad L^2\bigl([0,T];L^2_{loc}(\Omega)\bigr)\,.
\end{equation}

In order to compute the limit system, let us take $\phi\in\mc{D}\bigl([0,T[\,\times\Omega\bigr)$, with $\phi=\phi(x^h)$, and use
$\psi\,=\,\left(\nabla^\perp_h\phi,0\right)$ as a test function in equation \eqref{eq:weak-momentum}. Since $\div\psi=0$, as before we get
\begin{eqnarray}
& & 
\int^T_0\!\!\!\int_\Omega\biggl(-\rho_\veps^2\,u_\veps\cdot\d_t\psi\,-\,\rho_\veps u_\veps\otimes\rho_\veps u_\veps:\nabla\psi\,+\,
\rho_\veps^2\left(u_\veps\cdot\psi\right)\div u_\veps\,+\,\frac{1}{\veps}\,e^3\times\rho_\veps^2u_\veps\cdot\psi\,+ \label{eq:weak-limit_0} \\
& & +\,\nu\rho_\veps Du_\veps:\rho_\veps\nabla\psi\,+\,\nu\rho_\veps Du_\veps:\left(\psi\otimes\nabla\rho_\veps\right)\,+\,
\frac{2}{\veps^2}\,\rho_\veps\Delta\rho_\veps\nabla\rho_\veps\cdot\psi\biggr)\,dx\,dt\,=\,
\int_\Omega\rho_{0,\veps}^2\,u_{0,\veps}\cdot\psi(0)\,dx\,. \nonumber
\end{eqnarray}

Also in this case, we rewrite the rotation term by using the mass equation \eqref{eq:weak-mass}:
\begin{eqnarray*}
\frac{1}{\veps}\int^T_0\!\!\int_\Omega e^3\times\rho_\veps^2u_\veps\cdot\psi & = & 
\frac{1}{\veps}\int^T_0\!\!\int_\Omega\rho_\veps
u^h_\veps\cdot\nabla_h\phi\,+\,\frac{1}{\veps}\int^T_0\!\!\int_\Omega\left(\rho_\veps-1\right)\rho_\veps u^h_\veps\cdot\nabla_h\phi \\
& = & -\,\int_\Omega r_{0,\veps}\,\phi(0)\,-\,\int^T_0\!\!\int_\Omega r_\veps\,\d_t\phi\,+\,\int^T_0\!\!\int_\Omega r_\veps\,\rho_\veps\,
u^h_\veps\cdot\nabla_h\phi\,.
\end{eqnarray*}
Again, the last term in the right-hand side converges, due to \eqref{eq:conv-rho-u} and
the strong convergence of $r_\veps$ in $L^2_T(L^2)$.

For analysing the capillarity term, we write
$$
\frac{2}{\veps^2}\int^T_0\int_\Omega\rho_\veps\Delta\rho_\veps\nabla\rho_\veps\cdot\psi\,=\,
\frac{2}{\veps^2}\int^T_0\int_\Omega\Delta\rho_\veps\,\nabla\rho_\veps\cdot\psi\,+\,
\frac{2}{\veps^2}\int^T_0\int_\Omega\left(\rho_\veps-1\right)\,\Delta\rho_\veps\,\nabla\rho_\veps\cdot\psi\,.
$$
By uniform bounds, we gather that the second term goes to $0$; on the other hand, combining \eqref{conv:strong_delta-rho} with the
weak convergence of $\Delta r_\veps$ in $L^2_T(L^2)$ implies that also the first term converges for $\veps\ra0$.

Putting these last two relations into \eqref{eq:weak-limit_0} and using convergence properties established above in order to pass to the limit,
we arrive at the equation
\begin{eqnarray}
& & \hspace{-0.2cm} \int^T_0\int_\Omega\biggl(-\,u\cdot\d_t\psi\,-\,u\otimes u:\nabla\psi\,-\,r\,\d_t\phi\,+ \label{eq:weak_0} \\
& & \qquad\qquad
+\,r\,u^h\cdot\nabla_h\phi\,+\,\nu Du:\nabla\psi\,+\,2\,\Delta r\,\nabla r\cdot\psi\biggr)\,dx\,dt\,=\,
\int_\Omega\bigl(u_0\cdot\psi(0)\,+\,r_0\,\phi(0)\bigr)\,dx\,. \nonumber
\end{eqnarray}

Now we use that $\psi=\bigl(\nabla_h^\perp\phi,0\bigr)$ and that, by Proposition \ref{p:weak-limit},
$u=\bigl(\nabla^\perp_h\wtilde{r},0\bigr)$, where we have set $\wtilde{r}\,:=\,\left(\Id-\Delta\right)r$. Keeping in mind that all
these functions do not depend on $x^3$, by integration by parts we get
\begin{eqnarray*}
-\int^T_0\int_\Omega u\cdot\d_t\psi\,dx\,dt & = & \int^T_0\int_{\R^2}\Delta_h\wtilde{r}\,\d_t\phi\,dx^h\,dt \\
-\int^T_0\int_\Omega u\otimes u:\nabla\psi\,dx\,dt & = & -\int^T_0\int_{\R^2}\nabla^\perp_h\wtilde{r}\cdot\nabla_h\Delta_h\wtilde{r}\,
\phi\,dx^h\,dt \\
& = & -\int^T_0\int_{\R^2}\nabla^\perp_hr\cdot\nabla_h\Delta_hr\,\phi\,dx^h\,dt\,+ \\
& & \hspace{-1cm} +\,\int^T_0\int_{\R^2}\nabla^\perp_hr\cdot\nabla_h\Delta^2_hr\,\phi\,dx^h\,dt\,-\,
\int^T_0\int_{\R^2}\nabla^\perp_h\Delta_hr\cdot\nabla_h\Delta^2_hr\,\phi\,dx^h\,dt \\
\nu\int^T_0\int_\Omega Du:\nabla\psi\,dx\,dt & = & \frac{\nu}{2}\int^T_0\int_{\R^2}\Delta_h^2\wtilde{r}\,\phi\,dx^h\,dt\,.
\end{eqnarray*}
Moreover, it is also easy to see that the following identities hold true:
\begin{eqnarray*}
\int^T_0\int_\Omega r\,u^h\cdot\nabla_h\phi\,dx\,dt & = & -\int^T_0\int_{\R^2}\nabla^\perp_hr\cdot\nabla_h\Delta_h r\,\phi\,dx^h\,dt \\
2\int^T_0\int_\Omega \Delta r\,\nabla r\cdot\psi\,dx\,dt & = & 2\int^T_0\int_{\R^2}\nabla^\perp_hr\cdot\nabla_h\Delta_h r\,\phi\,dx^h\,dt\,.
\end{eqnarray*}
Then, these terms, together with the first one coming from the transport part $u\otimes u$, cancel out.

Hence, putting all these equalities together gives us the quasi-geostrophic type equation stated in Theorem \ref{th:alpha=0},
which is now completely proved.

\section{Vanishing capillarity limit: anisotropic scaling} \label{s:general_a}

In this section we complete the proof of Theorem \ref{th:sing-lim}, focusing on the remaining cases $0<\alpha<1$.
The results of Section \ref{s:bounds} still holding true, we just have to analyse the propagation of acoustic waves
and to prove strong convergence of the velocity fields.

First of all, let us write system \eqref{eq:NSK+rot} in the form
$$
\begin{cases}
\veps\,\d_tr_\veps\,+\,\div\,V_\veps\,=\,0 \\[1ex]
\veps\,\d_tV_\veps\,+\,\Bigl(e^3\times V_\veps\,+\,\nabla r_\veps\Bigr)\,=\,\veps\,f_{\veps,\alpha}\,+\,
\veps^{\alpha}\,g_\veps\,.
\end{cases}
$$
Here, like in the previous section, $f_{\veps,\alpha}$ is obtained from $f_\veps$ of Paragraph \ref{sss:propagator},
by replacing the last term of formula \eqref{eq:f_veps} with
$$
\frac{1}{\veps^{2(1-\alpha)}}\,\bigl(\rho_\veps-1\bigr)\,\nabla\Delta\rho_\veps\,;
$$
moreover, we have defined
$$
g_\veps\,:=\,\frac{1}{\veps^{1-\alpha}}\,\nabla\Delta\bigl(\rho_\veps-1\bigr)\,.
$$
Notice that $\bigl(f_{\veps,\alpha}\bigr)_\veps$ is bounded in $L^2_T\bigl(W^{-1,2}(\Omega)+W^{-1,1}(\Omega)\bigr)$ for any $\alpha$,
while uniform bounds imply that $\bigl(g_\veps\bigr)_\veps$ is bounded in $L^2_T\bigl(W^{-1,2}(\Omega)\bigr)$.

For $0<\alpha<1$, we remark that the term $g_\veps$ is of higher order than $f_{\veps,\alpha}$: then, we cannot treat it
as a remainder. Then, the first step is to put it on the left-hand side of
the equation, and to read it as a small perturbation of the acoustic propagator $\mc{A}$ (defined in Paragraph \ref{sss:propagator}).
Hence, we are led to consider a one-parameter continuous family of operators, each one of which admits a symmetrizer
on the Hilbert space $H_M$.

Roughly speaking, all these operators have the same point spectrum (we will be much more precise below, see Subsection \ref{ss:appl}):
the idea is then to apply a sort of RAGE theorem for one-parameter family of operators and metrics, in order to prove
dispersion of the components of the solutions orthogonal to the kernels of the acoustic propagators.

In what follows, first of all we will set up the problem in an abstract way, showing a RAGE-type theorem
for families of operators and metrics. This having been done, we will apply the general theory to our particular case, and this will
complete the proof of Theorem \ref{th:sing-lim} for $\alpha\,\in\,]0,1[\,$.

\subsection{RAGE theorem depending on a parameter} \label{ss:RAGE_param}

As just said, we want to extend the RAGE theorem to the case when both operators and metrics depend on a small parameter
$\eta$ (for us, $\eta=\veps^{2\alpha}$).

For the sake of completeness, we start by presenting some variants of the Wiener theorem, which is the basis to prove
the RAGE theorem.

\subsubsection{Variants of the Wiener theorem} \label{sss:wiener}

First of all, some definitions are in order.
\begin{defin} \label{d:measure_ineq}
 Given two positive measures $\mu$ and $\nu$ defined on a measurable space $(X,\Sigma)$, we say $\;\mu\,\leq\,\nu\;$ if
$\mu(A)\leq\nu(A)$ for any measurable set $A\in\Sigma$.
\end{defin}

\begin{defin} \label{d:measure-cont}
Let $\bigl(\mu_{\eta}\bigr)_{\eta}$ be a one-parameter family of positive measures on a measurable space $(X,\Sigma)$.
We say that it is a \emph{continuous family} (with respect to $\eta$) if, for any $A\in\Sigma$, the map
$\eta\,\mapsto\,\mu_\eta(A)$ is continuous from $[0,1]$ to $\R_+$.
\end{defin}
The notion of continuity we adopt corresponds then to the strong (also called setwise) topology in the space of measures on $(X,\Sigma)$. Notice  that
this notion requires no uniformity with respect to $A\in\Sigma$.

The first result is a very simple adaptation of the original Wiener theorem, which can be found e.g. in \cite{R-S_III}
(see the Appendix to Section XI.17). Its proof goes along the lines of the original one: for later use, however,
we give the most of the details.

\begin{prop} \label{p:wiener_eta}
Let $\bigl(\mu_\eta\bigr)_{\eta\in[0,1]}$ be a family of finite Baire measures on $\R$, such that
\begin{equation} \label{eq:monot-meas}
\mu_{\eta_1}\,\leq\,\mu_{\eta_2}\qquad\qquad\qquad\forall\quad0\,\leq\,\eta_1\,\leq\,\eta_2\,\leq\,1\,.
\end{equation}
For any $\eta\in[0,1]$, let us define the Fourier transform of $\mu_\eta$ by the formula
$$
F_\eta(t)\,:=\,\int_\R e^{-ixt}\,d\mu_\eta(x)\,.
$$

Then one has
$$
\lim_{\eta\ra0}\,\lim_{T\ra+\infty}\,\frac{1}{2T}\int^T_{-T}\left|F_\eta(t)\right|^2\,dt\,=\,
\sum_{x\in\R}\bigl|\mu_0\left(\{x\}\right)\bigr|^2\,.
$$
In particular, if $\mu_0$ has no pure points, then the limit is $0$.
\end{prop}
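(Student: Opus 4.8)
The plan is to derive the statement from the classical Wiener theorem applied to each single measure $\mu_\eta$, and then to pass to the limit $\eta\to0$ by a dominated convergence argument in the "atom variable". \emph{Step 1 (the inner limit).} Fix $\eta\in[0,1]$. Since $\mu_\eta$ is a finite positive measure, $\overline{F_\eta(t)}=\int_\R e^{ixt}\,d\mu_\eta(x)$, hence
\[
\frac{1}{2T}\int_{-T}^{T}\left|F_\eta(t)\right|^2dt\,=\,\int_\R\!\int_\R\left(\frac{1}{2T}\int_{-T}^{T}e^{-i(x-y)t}\,dt\right)d\mu_\eta(x)\,d\mu_\eta(y)\,,
\]
Fubini being legitimate because $\mu_\eta\otimes\mu_\eta$ is finite. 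The bracketed integral equals $1$ when $x=y$ and $\frac{\sin\bigl(T(x-y)\bigr)}{T(x-y)}$ otherwise; it is bounded by $1$ and converges pointwise, as $T\to+\infty$, to the indicator of the diagonal $\{x=y\}$. Dominated convergence with respect to the finite measure $\mu_\eta\otimes\mu_\eta$, together with the slicing identity $(\mu_\eta\otimes\mu_\eta)\bigl(\{x=y\}\bigr)=\int_\R\mu_\eta(\{x\})\,d\mu_\eta(x)=\sum_{x\in\R}\bigl|\mu_\eta(\{x\})\bigr|^2$, then gives
\[
\lim_{T\to+\infty}\frac{1}{2T}\int_{-T}^{T}\left|F_\eta(t)\right|^2dt\,=\,\sum_{x\in\R}\bigl|\mu_\eta(\{x\})\bigr|^2\,=:\,G(\eta)\,,
\]
the series being finite since $G(\eta)\le\bigl(\mu_\eta(\R)\bigr)^2\le\bigl(\mu_1(\R)\bigr)^2<+\infty$ by \eqref{eq:monot-meas}.

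\emph{Step 2 (the outer limit).} It remains to show $G(\eta)\to\sum_{x\in\R}\bigl|\mu_0(\{x\})\bigr|^2$ as $\eta\to0$. For every fixed $x\in\R$, \eqref{eq:monot-meas} makes $\eta\mapsto\mu_\eta(\{x\})$ non-decreasing, and the continuity of the family (Definition \ref{d:measure-cont}) forces $\mu_\eta(\{x\})\to\mu_0(\{x\})$; hence $\mu_\eta(\{x\})\downarrow\mu_0(\{x\})$ as $\eta\downarrow0$. Consequently, along any sequence $\eta_n\downarrow0$ the nonnegative terms $\bigl|\mu_{\eta_n}(\{x\})\bigr|^2$ decrease to $\bigl|\mu_0(\{x\})\bigr|^2$ while being dominated by the summable function $x\mapsto\bigl|\mu_1(\{x\})\bigr|^2$; dominated convergence on the counting measure over $\R$ yields $G(\eta_n)\to\sum_{x\in\R}\bigl|\mu_0(\{x\})\bigr|^2$. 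Since the sequence is arbitrary, this is the claimed limit, and it equals $0$ whenever $\mu_0$ has no pure points.

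\emph{Main obstacle.} Step 1 is the textbook Wiener computation and is essentially mechanical. The one genuinely delicate point lies in Step 2: monotonicity alone does \emph{not} guarantee $\mu_\eta\to\mu_0$ as $\eta\to0$ — the decreasing limit $\inf_{\eta>0}\mu_\eta$ may dominate $\mu_0$ strictly — so the continuity of the family in the sense of Definition \ref{d:measure-cont} is precisely what is needed to identify $\lim_{\eta\to0}G(\eta)$ with $\sum_{x\in\R}\bigl|\mu_0(\{x\})\bigr|^2$; once this is in hand, the interchange of $\lim_{\eta\to0}$ with the sum over atoms is a routine dominated convergence against the majorant $\mu_1$.
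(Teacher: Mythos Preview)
Your Step~1 reproduces the paper's computation essentially verbatim: Fubini, then dominated convergence (applied twice) to obtain $\sum_x|\mu_\eta(\{x\})|^2$ for each fixed $\eta$. For Step~2, the paper simply writes ``take the limit for $\eta\to0$ and apply the monotone convergence theorem'', without any appeal to continuity.

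Here is the discrepancy. You invoke continuity of the family (Definition~\ref{d:measure-cont}) to conclude $\mu_\eta(\{x\})\to\mu_0(\{x\})$, but continuity is \emph{not} a hypothesis of Proposition~\ref{p:wiener_eta}: only the monotonicity \eqref{eq:monot-meas} is assumed. And you are right to be uneasy: under monotonicity alone one only gets $\lim_{\eta\to0^+}\mu_\eta(\{x\})=\inf_{\eta>0}\mu_\eta(\{x\})$, which need not equal $\mu_0(\{x\})$ --- take for instance $\mu_0=0$ and $\mu_\eta=\delta_0$ for all $\eta>0$, which satisfies \eqref{eq:monot-meas} but for which the double limit equals $1$, not $0$. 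So the gap you flag is real, and your fix (adding continuity at $\eta=0$) is a legitimate way to close it, but it is an extra hypothesis not present in the statement. The paper's one-line invocation of monotone convergence tacitly relies on the same right-continuity at $\eta=0$; in the later applications this always holds, so the issue is harmless in context, but as a standalone statement the proposition needs that caveat --- and strictly speaking your proof, as written, assumes more than what is given.
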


\begin{proof}
Like in the proof of the original statement, for any fixed $\eta$ we can write
\begin{eqnarray*}
\frac{1}{2T}\int^T_{-T}\left|F_\eta(t)\right|^2\,dt & = & \int_\R d\mu_\eta(x)\left(\int_\R d\mu_\eta(y)
\left(\frac{1}{2T}\int^T_{-T}e^{-i(x-y)t}\,dt\right)\right) \\
& = & \int_\R d\mu_\eta(x)\left(\int_\R d\mu_\eta(y)\,\frac{\sin\bigl(T(x-y)\bigr)}{T(x-y)}\right)
\end{eqnarray*}
by Fubini's theorem. Let us now define
$$
H_\eta(T,x)\,:=\,\int_\R\frac{\sin\bigl(T(x-y)\bigr)}{T(x-y)}\,d\mu_\eta(y)\,:
$$
the integrand in $H_\eta$ is pointwise bounded by $1$; moreover, for $T\longrightarrow+\infty$, it converges to $0$ if $y\neq x$,
and to $1$ if $y=x$. Hence, by dominated convergence theorem we have
$$
\lim_{T\ra+\infty}H_\eta(T,x)\,=\,\mu_\eta\left(\{x\}\right)\,.
$$
Moreover, $\bigl|H_\eta(T,x)\bigr|\,\leq\,\mu_\eta(\R)$; then, by dominated convergence theorem again we infer that
$$
\lim_{T\ra+\infty}\int_\R d\mu_\eta(x)\left(\int_\R d\mu_\eta(y)\,\frac{\sin\bigl(T(x-y)\bigr)}{T(x-y)}\right)\,=\,
\sum_{x\in\R}\bigl|\mu_\eta\left(\{x\}\right)\bigr|^2\,.
$$
Finally, we take the limit for $\eta\longrightarrow0$ and we apply the monotone convergence theorem.
\end{proof}

\begin{rem} \label{r:Wiener}
 Note that, if monotonicity hypothesis \eqref{eq:monot-meas} is not fulfilled, then one gets
$$
\lim_{\eta\ra0}\,\lim_{T\ra+\infty}\,\frac{1}{2T}\int^T_{-T}\left|F_\eta(t)\right|^2\,dt\,=\,
\lim_{\eta\ra0}\,\sum_{x\in\R}\bigl|\mu_\eta\left(\{x\}\right)\bigr|^2\,.
$$
In particular, if $\mu_\eta$ has no pure points for any $\eta$, then still the limit is $0$.
\end{rem}

We are now interested in linking the parameters $\eta$ and $T$ together, and in performing the two limits at the same time.
In this case, we can no more apply the dominated convergence theorem, as the measures themselves change
when $T$ increases. However, the next statement says that the previous result still holds true.

\begin{thm} \label{th:wiener}
Let $\sigma:[0,1]\longrightarrow[0,1]$ be a continuous increasing function, such that $\sigma(0)=0$ and $\sigma(1)=1$.
Let $\bigl(\mu_{\sigma(\veps)}\bigr)_{\veps\in[0,1]}$ be a family of finite Baire measures on $\R$, such that one of the two following
conditions holds true:
\begin{itemize}
 \item $\bigl(\mu_{\eta}\bigr)_{\eta\in[0,1]}$ is monotone increasing in the sense of inequality \eqref{eq:monot-meas};
 \item $\bigl(\mu_{\eta}\bigr)_{\eta\in[0,1]}$ is a continuous family, in the sense of Definition \ref{d:measure-cont}.
\end{itemize}
For any $\veps\in[0,1]$, let us denote by $F_\veps$ the Fourier transform of the measure $\mu_{\sigma(\veps)}$.

Then we have
$$
\lim_{\veps\ra0}\,\frac{1}{2T}\int^T_{-T}\left|F_\veps\bigl(t/\veps\bigr)\right|^2\,dt\,=\,
\sum_{x\in\R}\bigl|\mu_0\left(\{x\}\right)\bigr|^2\,.
$$
In particular, the limit is $0$ if $\mu_0$ has no pure points.
\end{thm}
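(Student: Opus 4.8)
The plan is to turn the claimed double limit into a single limit of the type already handled in the proof of Proposition~\ref{p:wiener_eta}, and then to rerun that argument with control of all the error terms that is uniform in the parameter. First I would change variables $t\mapsto\veps t$: setting $T_\veps:=T/\veps$, one has
\[
\frac{1}{2T}\int^T_{-T}\bigl|F_\veps(t/\veps)\bigr|^2\,dt\;=\;\frac{1}{2T_\veps}\int^{T_\veps}_{-T_\veps}\bigl|F_\veps(s)\bigr|^2\,ds\,,
\]
so the statement is now about $T_\veps\to+\infty$. Exactly as in the proof of Proposition~\ref{p:wiener_eta} (Fubini, followed by the explicit value of $\frac{1}{2T_\veps}\int^{T_\veps}_{-T_\veps}e^{-i(x-y)s}\,ds$), this equals
\[
\int_\R\int_\R\frac{\sin\bigl(T_\veps(x-y)\bigr)}{T_\veps(x-y)}\,d\mu_{\sigma(\veps)}(x)\,d\mu_{\sigma(\veps)}(y)\;=\;\int_\R\frac{\sin(T_\veps u)}{T_\veps u}\,d\rho_{\sigma(\veps)}(u)\,,
\]
where $\rho_\eta$ is the image of $\mu_\eta\otimes\mu_\eta$ under $(x,y)\mapsto x-y$. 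Since $\rho_\eta(\{0\})=(\mu_\eta\otimes\mu_\eta)(\{x=y\})=\sum_{x\in\R}\mu_\eta(\{x\})^2$, the target of the theorem is precisely $\rho_0(\{0\})$.

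Next I would record three properties of $(\rho_\eta)_{\eta\in[0,1]}$ inherited from the hypotheses on $(\mu_\eta)$. \emph{(1) Uniform finiteness:} $C:=\sup_{\eta\in[0,1]}\mu_\eta(\R)<\infty$ — in the monotone case because $\mu_\eta\le\mu_1$, in the continuous case because $\eta\mapsto\mu_\eta(\R)$ is continuous on the compact $[0,1]$ — hence $\rho_\eta(\R)\le C^2$. \emph{(2) Convergence of the atom at the origin:} $\rho_{\sigma(\veps)}(\{0\})\to\rho_0(\{0\})$ as $\veps\to0$; this follows from $\mu_{\sigma(\veps)}(A)\to\mu_0(A)$ for every Baire set $A$ (in the continuous case by continuity of $\eta\mapsto\mu_\eta(A)$ and of $\sigma$ at $0$; in the monotone case $\mu_{\sigma(\veps)}(A)$ decreases to $\inf_{\eta>0}\mu_\eta(A)$, which the monotone decreasing hypothesis identifies with $\mu_0(A)$), together with dominated convergence for the series $\sum_x\mu_{\sigma(\veps)}(\{x\})^2$, the dominating summable sequence being $\sup_\eta\mu_\eta(\{x\})^2$ — in the monotone case $\mu_1(\{x\})^2$, whose sum is at most $\mu_1(\R)^2$. \emph{(3) Uniform smallness of the mass near, but not at, the origin:} $\omega(\delta):=\sup_{\eta\in[0,1]}\rho_\eta\bigl(\{0<|u|\le\delta\}\bigr)\to0$ as $\delta\to0^+$. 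In the monotone case this is immediate since $\rho_\eta\le\rho_1$ and $\rho_1$ is a fixed finite measure with $\bigcap_{\delta>0}\{0<|u|\le\delta\}=\emptyset$. In the continuous case one first checks that $\eta\mapsto(\mu_\eta\otimes\mu_\eta)(E)$, hence $\eta\mapsto\rho_\eta(B)$, is continuous on $[0,1]$ for every Borel set — by a Dynkin-class argument starting from measurable rectangles and exploiting the uniform bound from (1) — and then applies Dini's theorem to the convergence $\rho_\eta(\{0<|u|\le\delta\})\downarrow0$, which is monotone in $\delta$ and takes place on the compact parameter set $[0,1]$.

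With these in hand the conclusion is routine. Fix $\delta>0$ and split
\[
\int_\R\frac{\sin(T_\veps u)}{T_\veps u}\,d\rho_{\sigma(\veps)}(u)=\rho_{\sigma(\veps)}(\{0\})+\int_{0<|u|\le\delta}\frac{\sin(T_\veps u)}{T_\veps u}\,d\rho_{\sigma(\veps)}(u)+\int_{|u|>\delta}\frac{\sin(T_\veps u)}{T_\veps u}\,d\rho_{\sigma(\veps)}(u)\,.
\]
Using $\bigl|\tfrac{\sin(T_\veps u)}{T_\veps u}\bigr|\le1$ on the middle integral and $\bigl|\tfrac{\sin(T_\veps u)}{T_\veps u}\bigr|\le\tfrac{1}{T_\veps|u|}$ on the last one, the middle term is bounded by $\omega(\delta)$ and the last by $\rho_{\sigma(\veps)}(\R)/(T_\veps\delta)\le\veps C^2/(T\delta)$. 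Given $\epsilon>0$, choose $\delta$ with $\omega(\delta)<\epsilon/3$, then $\veps$ small enough that $\veps C^2/(T\delta)<\epsilon/3$ and $\bigl|\rho_{\sigma(\veps)}(\{0\})-\rho_0(\{0\})\bigr|<\epsilon/3$; this gives $\bigl|\int_\R\tfrac{\sin(T_\veps u)}{T_\veps u}\,d\rho_{\sigma(\veps)}(u)-\rho_0(\{0\})\bigr|<\epsilon$, and since $\rho_0(\{0\})=\sum_{x\in\R}\mu_0(\{x\})^2=\sum_{x\in\R}|\mu_0(\{x\})|^2$ the theorem follows. The main obstacle is property~(3) in the continuous case: transferring continuity in $\eta$ from $\mu_\eta$ to $\mu_\eta\otimes\mu_\eta$ (equivalently to $\rho_\eta$) is not purely formal and must be carried out by a monotone/Dynkin-class argument that uses the uniform total-mass bound, after which Dini's theorem upgrades pointwise-in-$\eta$ smallness to uniform smallness; in the monotone case this step is trivial by domination by the fixed measure $\rho_1$.
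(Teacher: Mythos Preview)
Your approach is the same as the paper's: change of variables $t\mapsto t/\veps$, Fubini to get the double integral against $\mu_{\sigma(\veps)}\otimes\mu_{\sigma(\veps)}$ with kernel $\sin(T_\veps u)/(T_\veps u)$, then split into diagonal, near-diagonal ($0<|u|\le\delta$) and far-from-diagonal ($|u|>\delta$). Your packaging via the pushforward $\rho_\eta$ of $\mu_\eta\otimes\mu_\eta$ under $(x,y)\mapsto x-y$ is a clean notational simplification of what the paper does. The monotone case is handled exactly as in the paper.

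In the continuous case your plan via Dini is sound, but the key ingredient --- that $\eta\mapsto(\mu_\eta\otimes\mu_\eta)(E)$ is continuous for every Borel $E$ --- does not follow from a bare Dynkin/monotone-class argument. The class of such $E$ contains rectangles and is closed under complements, but it is \emph{not} obviously closed under increasing unions: an increasing pointwise limit of continuous functions of $\eta$ is only lower semicontinuous, and the uniform total-mass bound alone does not upgrade this to uniform convergence. The claim is nevertheless true, but it needs a sharper input --- for instance, the Vitali--Hahn--Saks theorem (a setwise-convergent sequence of finite measures is uniformly countably additive), which yields a ``double'' dominated convergence allowing one to pass to the limit in $\int g_n\,d\mu_{\eta_n}$ when both the integrand and the measure vary. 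Once continuity of $\eta\mapsto\rho_\eta(B)$ is secured, your property~(2) follows directly from the case $B=\{0\}$, so the separate dominated-convergence argument for the atom sum (whose dominating sequence $\sup_\eta\mu_\eta(\{x\})^2$ is not obviously summable in the continuous case) becomes unnecessary. For comparison, the paper's own treatment of the continuous case controls the near-diagonal term via the total variation $|\mu_{\sigma(\veps)}-\mu_0|(\R)$, which setwise continuity does not in general bound either; so your route via Dini, properly completed, is at least as robust.
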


\begin{proof}
 First of all, by the change of variable $\tau=t/\veps$, we are reconducted to prove that
$$
 \lim_{\veps\ra0}\,\frac{\veps}{2T}\int^{T/\veps}_{-T/\veps}\left|F_\veps\bigl(t)\right|^2\,dt\,=\,
\sum_{x\in\R}\bigl|\mu_0\left(\{x\}\right)\bigr|^2\,.
$$

Next, as done in the previous proof, the following equalities hold true:
\begin{eqnarray*}
\frac{\veps}{2T}\int^{T/\veps}_{-T/\veps}\left|F_\veps(t)\right|^2\,dt & = &
\int d\mu_{\sigma(\veps)}(x)\left(\int d\mu_{\sigma(\veps)}(y)\,\frac{\sin\bigl(T(x-y)/\veps\bigr)}{T(x-y)/\veps}\right) \\
& = & \int d\mu_{\sigma(\veps)}(x)\left(\mu_{\sigma(\veps)}\bigl(\{x\}\bigr)\,+\,
\int_{y\neq x} d\mu_{\sigma(\veps)}(y)\,\frac{\sin\bigl(T(x-y)/\veps\bigr)}{T(x-y)/\veps}\right) \\
& = & \sum_{x\in\R}\bigl|\mu_{\sigma(\veps)}\left(\{x\}\right)\bigr|^2\,+\,
\int d\mu_{\sigma(\veps)}(x)\left(\int_{y\neq x} d\mu_{\sigma(\veps)}(y)\,\frac{\sin\bigl(T(x-y)/\veps\bigr)}{T(x-y)/\veps}\right)\,.
\end{eqnarray*}
The first term on the right-hand side converges to the same quantity computed in $\veps=0$. Indeed, in the case of a monotone family this follows from monotone convergence theorem;
in the case of a continuous family, instead, we denote $f_\veps(x)\,:=\,\mu_{\sigma(\veps)}\bigl(\{x\}\bigr)$ and we apply Proposition \ref{p:Areskin}.
So, we have only to prove that
\begin{equation} \label{eq:to-prove}
\lim_{\veps\ra0}\,\int d\mu_{\sigma(\veps)}(x)\left(\int_{y\neq x}d\mu_{\sigma(\veps)}(y)\,
\left|\frac{\sin\bigl(T(x-y)/\veps\bigr)}{T(x-y)/\veps}\right|\right)\,=\,0\,.
\end{equation}

We first consider the case when the family of measures is monotone decreasing.

Let us fix a $\delta>0$, and let $\veps_\delta$ be such that $\veps/T\,\leq\,\delta$ for all $\veps\leq\veps_\delta$.
Moreover, let us define the sets $Y_\leq\,:=\,\left\{y\neq x\,\bigl|\,|x-y|\leq g(\delta)\right\}$ and
$Y_\geq\,:=\,\left\{y\neq x\,\bigl|\,|x-y|> g(\delta)\right\}$, for a suitable continuous function $g(\delta)$,
going to $0$ for $\delta\ra0$, to be determined later.

Then we can split the second integral in \eqref{eq:to-prove} into the sum of the integrals on $Y_\leq$ and $Y_\geq$,
and elementary inequalitites give us
$$
\int_{y\neq x}d\mu_{\sigma(\veps)}(y)\,\left|\frac{\sin\bigl(T(x-y)/\veps\bigr)}{T(x-y)/\veps}\right|\,\leq\,
\int_{Y_\leq}d\mu_{\sigma(\veps)}(y)\,+\,
\int_{Y_\geq}\frac{\veps}{T\,|x-y|}\,d\mu_{\sigma(\veps)}(y)\,.
$$

We first focus on the former term: we have
\begin{eqnarray*}
\int d\mu_{\sigma(\veps)}(x)\int_{Y_\leq}d\mu_{\sigma(\veps)}(y) & \leq &
\int\mu_{\sigma(\veps)}\bigl([x-g(\delta),x+g(\delta)]\setminus\{x\}\bigr)\,d\mu_{\sigma(\veps)}(x) \\
& \leq & \int\mu_{1}\bigl([x-g(\delta),x+g(\delta)]\setminus\{x\}\bigr)\,d\mu_{\sigma(\veps)}(x) \\
& \leq & \int\mu_{1}\bigl([x-g(\delta),x+g(\delta)]\setminus\{x\}\bigr)\,d\mu_{1}(x)\,,
\end{eqnarray*}
where the last inequality follow from the monotonicity property of the family of measures (and from the fact that the integrand does
not depend on $\veps$ anymore). From dominated convergence theorem, one can show that
$$
\lim_{\delta\ra0}\int\mu_{1}\bigl([x-g(\delta),x+g(\delta)]\setminus\{x\}\bigr)\,d\mu_{1}(x)\,=\,0
$$
(recall that $g(\delta)\ra0$ for $\delta\ra0$), and from this fact we deduce
\begin{equation} \label{est:Y_-}
\int d\mu_{\sigma(\veps)}(x)\int_{Y_\leq}d\mu_{\sigma(\veps)}(y)\,\leq\,C_\delta\,,
\end{equation}
for some suitable $C_\delta$ converging to $0$ for $\delta\ra0$.

We now consider the integral over $Y_\geq$. By definition of $\delta$, for any $\veps\leq\veps_\delta$ one has
\begin{eqnarray*}
\int d\mu_{\sigma(\veps)}(x)\left(\int_{Y_\geq}\frac{\veps}{T\,|x-y|}\,d\mu_{\sigma(\veps)}(y)\right) & \leq & \frac{\delta}{g(\delta)}\,
\int\mu_{\sigma(\veps)}\bigl(\R\bigr)\,d\mu_{\sigma(\veps)}(x) \\
& \leq & \frac{\delta}{g(\delta)}\,\bigl|\mu_{\sigma(\veps)}\left(\R\right)\bigr|^2\,.
\end{eqnarray*}
The term on the right-hand side can be bounded by the same quantity computed in $\mu_1$;
therefore, if we take for instance $g(\delta)=\sqrt\delta$, we find, for all $\veps\leq\veps_\delta$,
\begin{equation} \label{est:Y_+}
\int d\mu_{\sigma(\veps)}(x)\left(\int_{Y_\geq}\frac{\veps}{T\,|x-y|}\,d\mu_{\sigma(\veps)}(y)\right)\,\leq\,C\,\sqrt\delta\,.
\end{equation}

In the end, putting inequalities \eqref{est:Y_-} and \eqref{est:Y_+} together gives us relation \eqref{eq:to-prove},
and this completes the proof of the theorem.

\medbreak
Let us now prove \eqref{eq:to-prove} in the case of a continuous family of measures. As before, we split the domain of the
second integral into $Y_\leq$ and $Y_\geq$.

The control of the integral over $Y_\geq$ can be performed exactly as done above: we have
$$
\int d\mu_{\sigma(\veps)}(x)\left(\int_{Y_\geq}\frac{\veps}{T\,|x-y|}\,d\mu_{\sigma(\veps)}(y)\right)\,\leq\,
\frac{\delta}{g(\delta)}\,\bigl|\mu_{\sigma(\veps)}\left(\R\right)\bigr|^2\;\leq\;C\,\frac{\delta}{g(\delta)}
$$
for any $\veps\leq\veps_\delta$, where the last inequality follows from the hypothesis of setwise convergence. Again, the choice $g(\delta)=\sqrt{\delta}$ gives us \eqref{est:Y_+}.

For the integral over $Y_\leq$, we still have
$$
\int d\mu_{\sigma(\veps)}(x)\int_{Y_\leq}d\mu_{\sigma(\veps)}(y)\,\leq\,
\int\mu_{\sigma(\veps)}\bigl([x-g(\delta),x+g(\delta)]\setminus\{x\}\bigr)\,d\mu_{\sigma(\veps)}(x)\,.
$$
The idea is to resort once again to Proposition \ref{p:Areskin}. To this end, for all $\veps\geq0$ (recall that we have fixed $\delta>0$ above), we define
$$
\psi_\veps(x)\,=\,\mu_{\sigma(\veps)}\bigl([x-g(\delta),x+g(\delta)]\setminus\{x\}\bigr)\,.
$$
By hypothesis of setwise convergence, it immediately follows that $\psi_\veps(x)\,\longrightarrow\,\psi_0(x)$ in the limit $\veps\ra0$, for all $x\in\R$.
Moreover, $|\psi_\veps(x)|\,\leq\,\mu_{\sigma(\veps)}(\R)\,\leq\,C$ for all $\veps\in[0,1]$ and all $x\in\R$.
Then, we can apply Proposition \ref{p:Areskin} to deduce that
$$
\int d\mu_{\sigma(\veps)}(x)\int_{Y_\leq}d\mu_{\sigma(\veps)}(y)\;\longrightarrow\;
\int\mu_{0}\bigl([x-g(\delta),x+g(\delta)]\setminus\{x\}\bigr)\,d\mu_{0}(x)
$$
for $\veps\ra0$.
On the other hand, by dominated convergence theorem, one easily checks that, for $\delta\ra0$, the integral on the right-hand side
of the previous relation converges to $0$. Hence, for the $\delta>0$ fixed above, there exists a $\veps_\delta'$ such that, for all $0\leq\veps\leq\veps_\delta'$,
\begin{equation} \label{est:Y_-_cont}
\int d\mu_{\sigma(\veps)}(x)\int_{Y_\leq}d\mu_{\sigma(\veps)}(y)\,\leq\,C_\delta\,,
\end{equation}
for some suitable constant $C_\delta$ independent of $\veps$ and going to $0$ when $\delta\ra0$.

In the end, putting estimates \eqref{est:Y_+} and \eqref{est:Y_-_cont} together, we have proved that, for any small $\delta>0$, there exists an $\wtilde{\veps}_\delta\,=\,
\min\left\{\veps_\delta,\veps_\delta'\right\}$ such that,
for all $0<\veps\leq\wtilde{\veps}_\delta$,
$$
\int d\mu_{\sigma(\veps)}(x)\left(\int_{y\neq x}d\mu_{\sigma(\veps)}(y)\,
\left|\frac{\sin\bigl(T(x-y)/\veps\bigr)}{T(x-y)/\veps}\right|\right)\,\leq\,C_\delta\,+\,C\,\sqrt{\delta}\,,
$$
where the right-hand side converges to $0$ for $\delta\ra0$. This property completes the proof of \eqref{eq:to-prove} and of the whole Theorem \ref{th:wiener}.
\end{proof}

\subsubsection{RAGE-type theorems} \label{sss:RAGE_eta}

We are now ready to prove some results in the same spirit as the RAGE theorem, for families of operators and metrics.

Despite our attempt of generality, we have to make very precise assumptions for such families, which are modelled
on our problem issued from the Navier-Stokes-Korteweg system. On the other hand, these hypothesis seem to us to be
important in order to prove our result: we will point out where they will be used.

\medbreak
First of all, let us introduce some notations.

We are going to work in a fixed space $\mc{H}$; we will consider in $\mc H$ a continuous family of scalar products
$\bigl(\mc S_\eta\bigr)_{\eta\in[0,1]}$, each one of which induces a Hilbert structure on $\mc H$.
In general, we will write $(\mc H,\mc S_\eta)$ if we consider the Hilbert structure on $\mc H$ induced by the scalar product
$\mc S_\eta$; if we do not specify the scalar product (for instance,
in speaking of a self-adjoint operator), we mean we are referring to $\mc S_0$.
In fact, $\mc S_0$ will be a sort of ``reference metric'' for us, and we will consider the $\mc S_\eta$'s like
perturbations of it.

Moreover, we will use equivalently the notations $\mc S_\eta(X,Y)\,=\,\langle X,Y\rangle_\eta$, and we will denote by
$\|\,\cdot\,\|_\eta$ the induced norm. We will also write $X\perp_\eta Y$ if $X$ and $Y$ are orthogonal with respect to
$\mc S_\eta$; equally, given two subspaces $\mc E_1,\mc E_2\,\subset\,\mc H$, we write $\mc E_1\oplus_\eta\mc E_2$ if they
are orthogonal with respect to $\mc S_\eta$. For a linear operator $\mc P$ defined on $\mc H$, we  will set
$\|\mc P\|_{\mc L(\eta)}$ its operator norm with respect to the scalar product $\mc S_\eta$; for $\eta=0$ we will use
the notations $\|\,\cdot\,\|_{\mc L(0)}$ and $\|\,\cdot\,\|_{\mc L(\mc H)}$ in an equivalent way.
Finally, the adjoint of $\mc P$ with respect to $\mc S_\eta$ will be denoted by $\mc P^{*(\eta)}$.

In the same time, we will consider a one-parameter family of operators $\bigl(\mc B_\eta\bigr)_{\eta\in[0,1]}$, and we will see each
$\mc B_\eta$ as a perturbation of a self-adjoint operator $\mc B_0$ (recall that we mean self-adjoint with respect to $\mc S_0$).

From the original statement (see Theorem \ref{th:RAGE} above), we immediately infer the following one-parameter
variant of the RAGE theorem.
\begin{prop} \label{p:RAGE_eta}
Let $(\mc H,\mc S_0)$ be a Hilbert space, and let $\bigl(\mc S_\eta\bigr)_{\eta\in[0,1]}$ be a one-parameter family of scalar products
on $\mc H$, and suppose that they induces equivalent metrics, independently of $\eta$. \\
Let $\bigl(\mc B_\eta\bigr)_{\eta\in[0,1]}$ be a family of operators on $\mc H$ such that $\mc B_\eta$ is self-adjoint
with respect to the inner product $\mc S_\eta$ for all $\eta\in[0,1]$. \\
Let $\Pi_{\rm cont,\eta}$ the orthogonal (with respect to $\mc S_\eta$) projection onto $\mc H_{{\rm cont},\eta}$, where we defined
$$
\mc{H}\;=\;\mc H_{\rm cont, \eta}\;\oplus_\eta\;\oline{{\rm Eigen}\,(\mc{B}_\eta)}\,.
$$

Then, for any family of compact operators $\bigl(\mc K_\eta\bigr)_\eta$ on $\mc H$, one has
$$
\lim_{\eta\ra0}\,\lim_{T\ra+\infty}\,\left\|\frac{1}{T}\int^T_0e^{-it\mc B_\eta}\,\mc K_\eta\,
\Pi_{\rm cont,\eta}\,e^{it\mc B_\eta}\,dt\right\|_{\mc L(\eta)}\,=\,0\,.
$$
\end{prop}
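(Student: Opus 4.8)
The plan is to reduce the statement, for each fixed value of $\eta$, to the original RAGE theorem (Theorem \ref{th:RAGE}), and then to observe that the inner limit over $T$ vanishes identically in $\eta$, so that letting $\eta\ra0$ is immediate. In other words, in this \emph{decoupled} form the parameter $\eta$ plays no active role.

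More precisely, I would fix $\eta\in[0,1]$ and set up the correct functional framework. Since the scalar products $\mc S_\eta$ induce metrics on $\mc H$ that are uniformly equivalent (independently of $\eta$), each $(\mc H,\mc S_\eta)$ is in particular complete, hence a Hilbert space; moreover compactness of $\mc K_\eta$ is a property insensitive to the choice among these equivalent norms. By hypothesis $\mc B_\eta$ is self-adjoint on $(\mc H,\mc S_\eta)$, so $t\mapsto e^{it\mc B_\eta}$ is a strongly continuous group of unitaries on $(\mc H,\mc S_\eta)$, and $\Pi_{\rm cont,\eta}$ is exactly the $\mc S_\eta$-orthogonal projection onto the continuous subspace $\mc H_{\rm cont,\eta}$ in the decomposition $\mc H=\mc H_{\rm cont,\eta}\oplus_\eta\overline{{\rm Eigen}\,(\mc B_\eta)}$. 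Hence all the assumptions of Theorem \ref{th:RAGE} are met with $\mc H$ replaced by $(\mc H,\mc S_\eta)$, $\mc B=\mc B_\eta$, $\mc K=\mc K_\eta$ and $\Pi_{\rm cont}=\Pi_{\rm cont,\eta}$, and the theorem yields
$$
\lim_{T\ra+\infty}\left\|\frac1T\int^T_0e^{-it\mc B_\eta}\,\mc K_\eta\,\Pi_{\rm cont,\eta}\,e^{it\mc B_\eta}\,dt\right\|_{\mc L(\eta)}\,=\,0
$$
for every $\eta\in[0,1]$.

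It then follows that the map $\eta\mapsto\lim_{T\ra+\infty}\|\,\cdot\,\|_{\mc L(\eta)}$ is identically zero on $[0,1]$, and so is its limit as $\eta\ra0$, which is precisely the assertion of the proposition. In this form there is essentially no obstacle: the hypotheses of uniform equivalence and of continuity of the family $\bigl(\mc S_\eta\bigr)_\eta$ are used here only to ensure that every $\mc S_\eta$ defines a genuine Hilbert structure. The real difficulty — and the reason the variants of Wiener's theorem (Proposition \ref{p:wiener_eta} and Theorem \ref{th:wiener}) were prepared above — arises only when the two limits are \emph{coupled}, i.e. when one lets $T\ra+\infty$ and $\eta\ra0$ simultaneously along $T\sim1/\veps$ and $\eta=\eta(\veps)$; that coupled statement, rather than Proposition \ref{p:RAGE_eta}, is what is actually needed for the application to system \eqref{eq:NSK+rot} and will be established in Theorem \ref{th:RAGE_eps}.
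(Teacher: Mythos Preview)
Your proposal is correct and matches the paper's own argument exactly: the paper simply observes that, by Theorem \ref{th:RAGE} applied at each fixed $\eta$, the inner limit in $T$ vanishes, so the outer limit in $\eta$ is trivial. Your additional remarks about equivalence of norms and the contrast with the coupled limit in Theorem \ref{th:RAGE_eps} are apt and in the same spirit as the paper's commentary following the proposition.
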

As a matter of fact, by Theorem \ref{th:RAGE} the limit in $T\ra+\infty$ is $0$ at any $\eta$ fixed.

Remark that, for simplicity, we assumed that all the scalar products $\mc S_\eta$ are equivalent to each other. However,
such a hypothesis is not really needed at this level: it is enough to suppose that each operator $\mc K_\eta$ on $\mc H$
is compact with respect to the topology induced by $\mc S_\eta$.

\medbreak
In view of the application to the Navier-Stokes-Korteweg system, we are interested now in linking the parameters $\eta$ and $T$ together.
In this case, in order to prove a result in the same spirit of the RAGE theorem we need some additional hypotheses.

More precisely, we suppose that both $\bigl(\mc S_\eta\bigr)_\eta$ and $\bigl(\mc B_\eta\bigr)_\eta$
are defined by use of a family of automorphisms $\bigl(\Lambda_\eta\bigr)_\eta$ of $\mc H$ (again, we mean here that they are
bounded with respect to the reference metric $\mc S_0$). Let us make an important remark.
\begin{rem} \label{r:bounded-holom}
We will always suppose that the family of automorphisms $\bigl(\Lambda_\eta\bigr)_\eta$
is (real) \emph{bounded-holomorphic} in the sense of \cite{K}, Chapter VII (see Section 1).
This will be important to have series expansions in $\eta$ for $\Lambda_\eta$ and its inverse $\Lambda^{-1}_\eta$
(see also \cite{K}, Chapter VII, Paragraph 6.2).

Note however that the situation we consider in Subsection \ref{ss:appl} will be much simpler: we will have
$\Lambda_\eta\,=\,1+\eta\Delta$, and everything will be explicit.
\end{rem}

We aim at proving the following statement.
\begin{thm} \label{th:RAGE_eps}
Let $(\mc H,\mc S_0)$ be a Hilbert space, and $\mc B_0\in\mc L(\mc H)$ be a self-adjoint operator. \\
Let $\bigl(\Lambda_\eta\bigr)_{\eta\in[0,1]}$ be a bounded-holomorphic family of automorphisms
of $\mc H$, with $\Lambda_0=\Id$, such that each $\Lambda_\eta$ is self-adjoint and such that the monotonicity property
$$
\Lambda_{\eta_1}\,\leq\,\Lambda_{\eta_2}\qquad\qquad\qquad\forall\quad0\leq\eta_1\leq\eta_2\leq1
$$
(in the sense of self-adjoint operators) is verified. \\
For any $\eta\in[0,1]$, let $\mc S_\eta$ be the scalar product on $\mc H$ induced by $\Lambda_\eta$: for all
$X,Y\in\mc H$, we set $\mc S_\eta(X,Y)\,:=\,\langle X,\Lambda_\eta Y\rangle_0$. \\
Define also $\mc B_\eta\,:=\,\mc B_0\circ\Lambda_\eta$, and suppose that $\sigma_p(\mc B_\eta)\,=\,\{0\}$ for all $\eta$. \\
Let $\mc H_{\rm cont,\eta}$ the orthogonal complement of ${\rm Ker}\,\mc B_\eta$ in $\mc H$ with respect to $\mc S_\eta$:
\begin{equation} \label{eq:decomp_H}
\mc{H}\;=\;\mc H_{\rm cont, \eta}\;\oplus_\eta\;{\rm Ker}\,\mc B_\eta\,,
\end{equation}
and let $\Pi_{\rm cont,\eta}$ be the orthogonal (with respect to $\mc S_\eta$) projection onto $\mc H_{\rm cont,\eta}$. \\
Let us now take $\eta\,=\,\sigma(\veps)$, where $\sigma:[0,1]\longrightarrow[0,1]$ is a continuous increasing function
such that $\sigma(0)=0$ and $\sigma(1)=1$.

Then, for any compact operator $\mc K$ on $\mc H$ and any $T>0$ fixed, defining
$\mc K_{\sigma(\veps)}\,=\,\Lambda_{\sigma(\veps)}^{-1}\mc K$, one has that
$$
\lim_{\veps\ra0}\,\left\|\frac{1}{T}\int^T_0\exp\left(-i\,\frac{t}{\veps}\,\mc B_{\sigma(\veps)}\right)\,\mc K_{\sigma(\veps)}\,
\Pi_{{\rm cont},\sigma(\veps)}\,\exp\left(i\,\frac{t}{\veps}\,\mc B_{\sigma(\veps)}\right)\,dt\right\|_{\mc L(\veps)}\,=\,0\,.
$$
\end{thm}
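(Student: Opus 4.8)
The plan is to transcribe the classical derivation of the RAGE corollary (Corollary \ref{c:RAGE}): reduce the compact operator $\mc K$ to rank one, estimate by the Cauchy--Schwarz inequality in time together with a Hilbert--Schmidt bound, and at the very end invoke Wiener's lemma --- except that the lemma must now be its $\veps$-dependent version, Theorem \ref{th:wiener}, and the whole computation has to be carried out with the moving scalar product $\mc S_\eta$, $\eta=\sigma(\veps)$.

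I would first collect a few structural facts. Since $\mc B_0$ and $\Lambda_\eta$ are self-adjoint for $\mc S_0$, one checks at once that $\mc B_\eta=\mc B_0\circ\Lambda_\eta$ is self-adjoint for $\mc S_\eta$; hence $e^{it\mc B_\eta/\veps}$ is $\mc S_\eta$-unitary, commutes with $\Pi_{\rm cont,\eta}$, and leaves ${\rm Ker}\,\mc B_\eta$ and $\mc H_{\rm cont,\eta}$ invariant. The monotonicity $\Id=\Lambda_0\le\Lambda_\eta$ gives $\Lambda_\eta\ge\Id$, so $\Lambda_\eta$ is invertible with $\|\Lambda_\eta^{-1}\|_{\mc L(0)}\le1$, and, the family being bounded on $[0,1]$, all the norms $\|\,\cdot\,\|_\eta$ are uniformly equivalent. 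Finally ${\rm Ker}\,\mc B_\eta=\Lambda_\eta^{-1}({\rm Ker}\,\mc B_0)$, and $\Lambda_\eta^{1/2}$ is a unitary map from $(\mc H,\mc S_\eta)$ onto $(\mc H,\mc S_0)$ conjugating $\mc B_\eta$ to the $\mc S_0$-self-adjoint operator $\what{\mc B}_\eta:=\Lambda_\eta^{1/2}\mc B_0\Lambda_\eta^{1/2}$, which therefore still satisfies $\sigma_p(\what{\mc B}_\eta)=\{0\}$; this identification will let me realise every spectral measure below in the single fixed Hilbert space $(\mc H,\mc S_0)$.

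Next comes the reduction. Denote by $\mc M_\veps$ the operator inside the norm in the statement. It is linear in $\mc K$ and $\|\mc M_\veps\|_{\mc L(\sigma(\veps))}\le C\,\|\mc K\|_{\mc L(0)}$ uniformly in $\veps$ (unitaries and projections have norm one for $\mc S_\eta$, and $\|\Lambda_\eta^{-1}\|_{\mc L(0)}\le1$); since finite-rank operators are $\|\,\cdot\,\|_{\mc L(0)}$-dense in the compact ones, it suffices to treat $\mc K=\langle\,\cdot\,,b\rangle_0\,a$. For such $\mc K$ one has $\mc K_\eta=\langle\,\cdot\,,\Lambda_\eta^{-1}b\rangle_\eta\,\Lambda_\eta^{-1}a$, and, setting $c_\eta:=\Pi_{\rm cont,\eta}\Lambda_\eta^{-1}b$ and using that $e^{it\mc B_\eta/\veps}$ commutes with $\Pi_{\rm cont,\eta}$, one gets $\mc M_\veps Y=\frac1T\int_0^T\langle Y,e^{-it\mc B_\eta/\veps}c_\eta\rangle_\eta\;e^{-it\mc B_\eta/\veps}\Lambda_\eta^{-1}a\,dt$. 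Cauchy--Schwarz in $t$ gives $\|\mc M_\veps\|_{\mc L(\eta)}\le\|\Lambda_\eta^{-1}a\|_\eta\,\bigl(\sup_{\|Y\|_\eta\le1}J_\veps(Y)\bigr)^{1/2}$ with $J_\veps(Y):=\frac1T\int_0^T|\langle Y,e^{-it\mc B_\eta/\veps}c_\eta\rangle_\eta|^2\,dt$. Now $J_\veps(Y)=\langle\mc N_\veps Y,Y\rangle_\eta$ where $\mc N_\veps:=\frac1T\int_0^T\langle\,\cdot\,,\phi_t\rangle_\eta\,\phi_t\,dt$ and $\phi_t:=e^{-it\mc B_\eta/\veps}c_\eta$, a nonnegative trace-class operator on $(\mc H,\mc S_\eta)$; hence $\sup_{\|Y\|_\eta\le1}J_\veps(Y)=\|\mc N_\veps\|_{\mc L(\eta)}\le\|\mc N_\veps\|_{{\rm HS}(\eta)}$ and a direct computation yields $\|\mc N_\veps\|_{{\rm HS}(\eta)}^2=\frac1{T^2}\iint_{[0,T]^2}|\langle\phi_t,\phi_s\rangle_\eta|^2\,dt\,ds\le\frac1T\int_{-T}^{T}|G_\eta(u/\veps)|^2\,du$, where $G_\eta(\tau):=\langle e^{i\tau\mc B_\eta}c_\eta,c_\eta\rangle_\eta=\int_\R e^{i\tau\lambda}\,d\mu_\eta(\lambda)$ and $\mu_\eta$ is the spectral measure of $\mc B_\eta$ at the vector $c_\eta$.

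It then remains to apply Theorem \ref{th:wiener} to the family $(\mu_\eta)_{\eta\in[0,1]}$. Its total mass $\mu_\eta(\R)=\|c_\eta\|_\eta^2$ is bounded uniformly in $\eta$, and --- this is where $\sigma_p(\mc B_\eta)=\{0\}$ and the presence of $\Pi_{\rm cont,\eta}$ are essential --- $\mu_\eta$ has no pure points, because $c_\eta\in\mc H_{\rm cont,\eta}=({\rm Ker}\,\mc B_\eta)^{\perp_\eta}$ and atoms of a spectral measure can only sit at eigenvalues. Transporting through $\Lambda_\eta^{1/2}$, $\mu_\eta$ is the $\mc S_0$-spectral measure of $\what{\mc B}_\eta$ at $\what c_\eta:=\what\Pi_{\rm cont,\eta}\Lambda_\eta^{-1/2}b$; since $(\Lambda_\eta)_\eta$ is bounded-holomorphic (Remark \ref{r:bounded-holom}), $\eta\mapsto\what{\mc B}_\eta$ is continuous in $\mc L(\mc H)$ and $\eta\mapsto\what c_\eta$ is continuous in $\mc H$ (the orthogonal projection onto ${\rm Ker}\,\what{\mc B}_\eta=\Lambda_\eta^{-1/2}{\rm Ker}\,\mc B_0$ varies continuously), whence, using the stability of spectral projections of norm-convergent self-adjoint operators on sets whose boundary avoids the common point spectrum $\{0\}$, $(\mu_\eta)_\eta$ is a continuous family in the sense of Definition \ref{d:measure-cont}. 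Theorem \ref{th:wiener} then gives $\frac1{2T}\int_{-T}^{T}|G_{\sigma(\veps)}(t/\veps)|^2\,dt\ra0$ as $\veps\ra0$; hence $\|\mc N_\veps\|_{\mc L(\sigma(\veps))}\ra0$, hence $\|\mc M_\veps\|_{\mc L(\sigma(\veps))}\ra0$ for rank-one $\mc K$ and, by the density argument, for every compact $\mc K$. The main obstacle is precisely this last step: converting the (real-)holomorphic dependence of $\Lambda_\eta$ into the continuity, in the sense of Definition \ref{d:measure-cont}, of a family of spectral measures attached to operators, vectors \emph{and} inner products that all move with $\eta$. Once the data are transported to the fixed metric $\mc S_0$ this reduces to standard perturbation theory of self-adjoint operators, but it is the delicate point; everything else is a faithful copy of the classical RAGE-corollary computation.
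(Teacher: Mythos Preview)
Your overall architecture --- reduce to rank one, Cauchy--Schwarz in time, pass to a spectral measure, and invoke a Wiener-type lemma --- is exactly the paper's strategy, and the transport to the fixed metric via $\Lambda_\eta^{1/2}$ is a nice device. The discrepancy is at the very last step, and it is a genuine gap.

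You claim that $(\mu_\eta)_\eta$ is continuous in the sense of Definition~\ref{d:measure-cont}, i.e.\ $\eta\mapsto\mu_\eta(A)$ is continuous for \emph{every} Borel set $A$, and you justify this by ``stability of spectral projections of norm-convergent self-adjoint operators on sets whose boundary avoids the common point spectrum $\{0\}$''. But stability of spectral projections holds only for sets whose boundary avoids the \emph{full} spectrum of the limiting operator, not merely its point spectrum; since here the continuous spectrum of $\what{\mc B}_\eta$ may fill an interval, this gives you continuity of $\mu_\eta(A)$ only for a very restricted class of $A$, far from what Definition~\ref{d:measure-cont} demands. Norm continuity of $\what{\mc B}_\eta$ and of $\what c_\eta$ yields weak convergence of the $\mu_\eta$, not setwise convergence. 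In fact the paper says explicitly (just before the end of the proof) that ``the family of measures does not depend continuously on $\veps$: as a matter of facts, we are in the continuous part of the spectrum, which is highly unstable''.

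The paper does not try to verify either hypothesis of Theorem~\ref{th:wiener}. Instead it isolates the one quantitative input that survives: Lemma~\ref{l:conv_measures}, the convergence of the total masses $\mu_\eta(\R)\to\mu_0(\R)$ (an elementary consequence of the expansions $\Lambda_\eta^{-1}=\Id+\eta\wtilde{\mc D}_\eta$ and Corollary~\ref{c:projection}). Combined with the fact that each $\mu_\eta$ has no atoms, this is enough to re-run the estimates in the proof of Theorem~\ref{th:wiener} (the $Y_\geq$ part uses only a uniform bound on $\mu_\eta(\R)$, and the pure-point sum vanishes identically). So to close your argument you should replace the unjustified setwise continuity by this weaker lemma on total masses and then repeat the Wiener computation directly, rather than invoking Theorem~\ref{th:wiener} as a black box.
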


Before proving the theorem, let us make some comments.
\begin{rem} \label{r:RAGE_param}
 \begin{itemize}
\item[(i)] Notice that, by definitions of $\mc S_\eta$ and $\mc B_\eta$, it immediately follows that each
operator $\mc B_\eta$ is self-adjoint with respect to the scalar product $\mc S_\eta$. Then, the orthogonal decomposition
\eqref{eq:decomp_H} and the definition of the semigroup $\exp\bigl(it\mc B_\eta\bigr)$ make sense.
\item[(ii)] Decomposition \eqref{eq:decomp_H} is based on the hypothesis $\sigma_p(\mc B_\eta)\,=\,\{0\}$ for all $\eta$.
Such a spectral condition is important for stating Lemma \ref{l:ker-ort} and deriving Corollary \ref{c:projection},
which will be used in the proof.
\item[(iii)] On the other hand, the hypothesis $\sigma_p(\mc B_\eta)\,=\,\{0\}$ for all $\eta$ looks quite strong, but
it actually applies to the problem we want to deal with (see Proposition \ref{p:A-spec_a}).
We will not pursue here the issue of weakening this condition; moreover, in Proposition \ref{p:spectral}
we will give a sufficient condition in order to guarantee it (again, such a condition applies to our case, see also Remark
\ref{r:spectral}).
\item[(iv)] The monotonocity of the family of automorphisms $\bigl(\Lambda_\eta\bigr)_\eta$ implies
an analogous property for the scalar products $\bigl(\mc S_\eta\bigr)_\eta$; moreover, since $\Lambda_1$ is in particular
continuous on $(\mc H,\mc S_0)$, we have also $\|\,\cdot\,\|_1\,\leq\,C\,\|\,\cdot\,\|_0$. Then, the metrics (and so the topologies)
induced by the $\mc S_\eta$'s are all equivalent: hence, saying that an operator $\mc K$ is compact, without
any other specification, makes sense in this context.
\item[(v)] The fact that the compact operators $\mc K_\eta$ depend on $\eta$ is important for us, because in the end we want to obtain
an analogue of Corollary \ref{c:RAGE} (the compact operator has to be self-adjoint with respect to each scalar product
we consider). However, working with $\mc K$ (independent of $\eta$) would not have been really usefull:
in the proof we will need to compute its adjoint with respect to $\mc S_\eta$, and there a dependence on
$\eta$ would arise in any case.
\item[(vi)] We also remark the following points. On the one hand, the fact that the $\mc K_\eta$'s are perturbations of a
fixed compact operator $\mc K$ allows us to reduce the proof to the case of an operator of rank $1$ (as in the original
RAGE theorem, see \cite{Cyc-Fr-K-S}): indeed, we need that the approximation by finite rank operators is, in some sense,
uniform in $\eta$.
On the other hand, in the proof we will exploit also the particular form $\mc K_\eta=\Lambda_\eta^{-1}\mc K$ of the perturbations:
it allows us to ``play'' with the special definition of the scalar products $\mc S_\eta$.
Notice that such a hypothesis is well-adapted to the case we want to consider (see Subsection \ref{ss:appl}).
\end{itemize}
\end{rem}

This having been pointed out, some preliminary results are in order.

\begin{lemma} \label{l:ker-ort}
Under the hypotheses of Theorem \ref{th:RAGE_eps}, for all $\eta\in[0,1]$ one has the equality
${\rm Ker}\,\mc B_\eta\,=\,\Lambda_\eta^{-1}\,{\rm Ker}\,\mc B_0$.
In particular, $\mc H_{\rm cont, \eta}\,\equiv\,\mc H_{\rm cont,0}$ for all $\eta$.
\end{lemma}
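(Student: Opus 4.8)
The plan is to unwind the definitions: since $\mc B_\eta=\mc B_0\circ\Lambda_\eta$ and the scalar product $\mc S_\eta$ is built out of the very same operator $\Lambda_\eta$, the two occurrences of $\Lambda_\eta$ will cancel against each other. First I would record that each $\Lambda_\eta$ is a bijection of $\mc H$. Indeed, the monotonicity hypothesis gives $\Lambda_\eta\geq\Lambda_0=\Id$ in the sense of self-adjoint operators, so $\Lambda_\eta$ is a positive self-adjoint operator bounded below by $\Id$; hence it is invertible on $\mc H$, with bounded (and $\mc S_0$-self-adjoint) inverse $\Lambda_\eta^{-1}$. In particular the set $\Lambda_\eta^{-1}\,{\rm Ker}\,\mc B_0$ is well defined, and $\Lambda_\eta$ maps it bijectively onto ${\rm Ker}\,\mc B_0$.

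For the kernel identity I would simply observe that, since $\mc B_0\in\mc L(\mc H)$, for every $X\in\mc H$ one has
$\mc B_\eta X=0\iff\mc B_0(\Lambda_\eta X)=0\iff\Lambda_\eta X\in{\rm Ker}\,\mc B_0\iff X\in\Lambda_\eta^{-1}\,{\rm Ker}\,\mc B_0$,
which is exactly the asserted equality ${\rm Ker}\,\mc B_\eta=\Lambda_\eta^{-1}\,{\rm Ker}\,\mc B_0$. Note in passing that ${\rm Ker}\,\mc B_\eta$ is closed, being the kernel of a bounded operator, so the orthogonal decomposition \eqref{eq:decomp_H} relative to the Hilbert structure $\mc S_\eta$ is legitimate.

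For the equality $\mc H_{\rm cont,\eta}\equiv\mc H_{\rm cont,0}$ I would then argue as follows. Fix $X\in\mc H$. By definition $X\in\mc H_{\rm cont,\eta}$ if and only if $\langle X,Z\rangle_\eta=0$ for all $Z\in{\rm Ker}\,\mc B_\eta=\Lambda_\eta^{-1}\,{\rm Ker}\,\mc B_0$, that is, $\langle X,\Lambda_\eta Z\rangle_0=0$ for all such $Z$. As $Z$ runs over $\Lambda_\eta^{-1}\,{\rm Ker}\,\mc B_0$, the vector $W:=\Lambda_\eta Z$ runs over all of ${\rm Ker}\,\mc B_0$; hence the condition is equivalent to $\langle X,W\rangle_0=0$ for every $W\in{\rm Ker}\,\mc B_0$, i.e. to $X\in({\rm Ker}\,\mc B_0)^{\perp_0}=\mc H_{\rm cont,0}$. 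This gives both inclusions simultaneously, and the lemma follows.

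There is no real obstacle here: the only points requiring a word of care are the invertibility of $\Lambda_\eta$ (furnished by the monotonicity assumption), which makes the substitution $W=\Lambda_\eta Z$ a genuine bijection between ${\rm Ker}\,\mc B_\eta$ and ${\rm Ker}\,\mc B_0$, and the fact that the operator $\Lambda_\eta$ appearing in the very definition of $\mc S_\eta$ is precisely the one needed to undo that substitution.
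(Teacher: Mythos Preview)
Your proof is correct and follows essentially the same route as the paper's: both arguments unwind $\mc B_\eta=\mc B_0\circ\Lambda_\eta$ to identify the kernel, and then use the relation $\langle X,Z\rangle_\eta=\langle X,\Lambda_\eta Z\rangle_0$ together with the bijection $W=\Lambda_\eta Z$ between ${\rm Ker}\,\mc B_\eta$ and ${\rm Ker}\,\mc B_0$ to match the orthogonal complements. Your version is slightly more explicit in justifying the invertibility of $\Lambda_\eta$ via the monotonicity hypothesis $\Lambda_\eta\geq\Id$, which the paper uses without comment.
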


\begin{proof}
 Let $X\in{\rm Ker}\,\mc B_0$. Then, by definition of $\mc B_\eta=\mc B_0\circ\Lambda_\eta$, one immediately has
 $\mc B_\eta\Lambda_\eta^{-1}X=0$, and hence
$\Lambda_\eta^{-1}{\rm Ker}\,\mc B_0\subset{\rm Ker}\,\mc B_\eta$.

On the other hand, if $Y\in{\rm Ker}\,\mc B_\eta$, the element $X:=\Lambda_\eta Y$ belongs to ${\rm Ker}\,\mc B_0$.
So $Y=\Lambda_\eta^{-1}X$, which proves the other inclusion ${\rm Ker}\,\mc B_\eta\subset\Lambda_\eta^{-1}{\rm Ker}\,\mc B_0$.

Let us now work with the orthogonal complements of the kernels.

Fix $E\in\mc H_{\rm cont,\eta}$: we want to prove $\langle E,X\rangle_0=0$ for all $X\in{\rm Ker}\,\mc B_0$.
In fact, from writing $X=\Lambda_\eta Y$, with $Y\in{\rm Ker}\,\mc B_\eta$, one infers
$$
\langle E,X\rangle_0\,=\,\langle E,\Lambda_\eta Y\rangle_0\,=\,\langle E,Y\rangle_\eta\,=\,0\,.
$$
Then $\mc H_{\rm cont,\eta}\subset\mc H_{\rm cont,0}$.

The reverse inclusion is obtained in a totally analogous way.
\end{proof}

Notice that, a priori, the previous proposition does not tell us anything about the orthogonal projections onto these subspaces.
For instance, if $\Pi_{K,\eta}$ denotes the orthogonal (with respect to $\mc S_\eta$) projection onto ${\rm Ker}\,\mc B_\eta$,
we cannot infer that $\Pi_{K,\eta}=\Lambda_\eta^{-1}\Pi_{K,0}$.

Nonetheless, we can state the following corollary.
\begin{coroll} \label{c:projection}
For all $\eta\in[0,1]$, we have
$$
\Pi_{\rm cont,\eta}\,=\,\Pi_{\rm cont,0}\,+\,\eta\,\mc R_\eta\;,\qquad\qquad\mbox{ with }\qquad
\sup_{\eta\in[0,1]}\left\|\mc R_\eta\right\|_{\mc L(\mc H)}\,\leq\,C\,.
$$
\end{coroll}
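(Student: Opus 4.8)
The plan is to produce an explicit formula for $\Pi_{\rm cont,\eta}$ as an oblique projection, and then to Taylor–expand it in $\eta$, the two ingredients being the bounded–holomorphy of $\bigl(\Lambda_\eta\bigr)_\eta$ and — crucially — the monotonicity $\Lambda_\eta\geq\Lambda_0=\Id$, which forces a bound on an inverse operator that is \emph{uniform} in $\eta$.

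Set $P:=\Pi_{\rm cont,0}$, the $\mc S_0$–orthogonal projection onto $W:=\mc H_{\rm cont,0}=({\rm Ker}\,\mc B_0)^{\perp_0}$, so that $\Id-P$ is the $\mc S_0$–orthogonal projection onto the closed subspace $K:={\rm Ker}\,\mc B_0=W^{\perp_0}$. By Lemma \ref{l:ker-ort} one has $\mc H_{\rm cont,\eta}=W$ for every $\eta$, and ${\rm Ker}\,\mc B_\eta=\Lambda_\eta^{-1}K$ (note $\Lambda_\eta$ is invertible with $\|\Lambda_\eta^{-1}\|_{\mc L(\mc H)}\leq1$ since $\Lambda_\eta\geq\Id$); moreover, $\Lambda_\eta$ being $\mc S_0$–self-adjoint, $W$ and $\Lambda_\eta^{-1}K$ are $\mc S_\eta$–orthogonal, so $\mc H=W\oplus_\eta\Lambda_\eta^{-1}K$ is exactly the decomposition underlying $\Pi_{\rm cont,\eta}$. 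I then introduce $A_\eta:=P\,\Lambda_\eta|_W\colon W\to W$: using self-adjointness of $P$ and $\Lambda_\eta$ one checks that $A_\eta$ is self-adjoint on $(W,\mc S_0|_W)$ and $\langle A_\eta w,w\rangle_0=\langle\Lambda_\eta w,w\rangle_0\geq\|w\|_0^2$ by monotonicity, whence $A_\eta\geq\Id_W$ is invertible with $\|A_\eta^{-1}\|_{\mc L(W)}\leq1$ for \emph{all} $\eta\in[0,1]$. Solving the decomposition $x=w+\Lambda_\eta^{-1}k$ ($w\in W$, $k\in K$) by applying $P\Lambda_\eta$ gives $w=A_\eta^{-1}P\Lambda_\eta x$, i.e.
$$
\Pi_{\rm cont,\eta}\;=\;A_\eta^{-1}\,P\,\Lambda_\eta\,,
$$
which indeed reduces to $P$ at $\eta=0$; checking idempotency, range $W$ and kernel $\Lambda_\eta^{-1}K$ is routine once $A_\eta$ is known to be a bijection of $W$.

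It then remains to expand. Since $\eta\mapsto\Lambda_\eta$ is bounded–holomorphic near $[0,1]$ with $\Lambda_0=\Id$, writing $\Lambda_\eta-\Id=\int_0^\eta\frac{d}{ds}\Lambda_s\,ds$ gives $\Lambda_\eta=\Id+\eta\,\Theta_\eta$ with $\sup_{\eta\in[0,1]}\|\Theta_\eta\|_{\mc L(\mc H)}\leq C$. Hence $A_\eta=\Id_W+\eta\,(P\Theta_\eta)|_W$, and from $A_\eta^{-1}=\Id_W-\eta\,A_\eta^{-1}(P\Theta_\eta)|_W$ together with $\|A_\eta^{-1}\|_{\mc L(W)}\leq1$ we get $A_\eta^{-1}P=P+\eta\,\wtilde C_\eta$ as operators on $\mc H$, with $\sup_\eta\|\wtilde C_\eta\|_{\mc L(\mc H)}\leq C$. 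Multiplying out,
$$
\Pi_{\rm cont,\eta}=(P+\eta\,\wtilde C_\eta)(\Id+\eta\,\Theta_\eta)=P+\eta\bigl(P\Theta_\eta+\wtilde C_\eta+\eta\,\wtilde C_\eta\Theta_\eta\bigr)=:P+\eta\,\mc R_\eta\,,
$$
and the triangle inequality yields $\sup_{\eta\in[0,1]}\|\mc R_\eta\|_{\mc L(\mc H)}\leq C$, which is the assertion.

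The step requiring the most care is the uniform-in-$\eta$ invertibility of $A_\eta$: a naive Neumann-series estimate would only be valid for $\eta$ small, and it is precisely the monotonicity hypothesis $\Lambda_\eta\geq\Id$ — not merely continuity — that promotes it to $\|A_\eta^{-1}\|_{\mc L(W)}\leq1$ on all of $[0,1]$ (alternatively, one could use compactness of $[0,1]$ and continuity of the bottom of the spectrum of $A_\eta$, but the monotonicity route is cleaner). Everything else is bookkeeping once the formula $\Pi_{\rm cont,\eta}=A_\eta^{-1}P\Lambda_\eta$ is in hand.
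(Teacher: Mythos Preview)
Your proof is correct, but it takes a genuinely different route from the paper's.

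The paper argues more directly: it decomposes any $X\in\mc H$ along the $\mc S_0$-orthogonal splitting $\mc H=W\oplus_0 K$ (with $W=\mc H_{\rm cont,0}$, $K={\rm Ker}\,\mc B_0$), observes that $\Pi_{\rm cont,\eta}$ acts as the identity on $W$ because $\mc H_{\rm cont,\eta}=W$, and is thus reduced to analysing $\Pi_{\rm cont,\eta}$ on $K$. For $Z\in K$ it writes $Z=\Lambda_\eta(\Lambda_\eta^{-1}Z)=(\Id+\eta\,\mc D_\eta)(\Lambda_\eta^{-1}Z)$ and uses $\Lambda_\eta^{-1}Z\in{\rm Ker}\,\mc B_\eta$ to kill the leading term, so $\Pi_{\rm cont,\eta}Z=\eta\,\Pi_{\rm cont,\eta}\mc D_\eta\Lambda_\eta^{-1}Z$; the uniform bound then follows from estimating $\|\Pi_{\rm cont,\eta}\|_{\mc L(\mc H)}$ and $\|\Lambda_\eta^{-1}\|_{\mc L(\eta)}$ via monotonicity. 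Your approach instead produces the closed-form oblique-projection formula $\Pi_{\rm cont,\eta}=A_\eta^{-1}P\Lambda_\eta$ with $A_\eta=(P\Lambda_\eta)|_W$, and then Taylor-expands; the uniform estimate $\|A_\eta^{-1}\|_{\mc L(W)}\leq1$ (from $A_\eta\geq\Id_W$) plays the same role as the paper's bounds on $\Pi_{\rm cont,\eta}$ and $\Lambda_\eta^{-1}$. Your formula is more structural and would lend itself to higher-order expansions, while the paper's argument is slightly more elementary in that it never needs to write down the projection explicitly; both rely on monotonicity in exactly the way you emphasise.
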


\begin{proof}
First of all, since $\mc H_{\rm cont,\eta}\equiv\mc H_{\rm cont,0}$ by Lemma \ref{l:ker-ort}, we infer
$$\Pi_{\rm cont,\eta}\circ\Pi_{\rm cont,0}\,=\,\Pi_{\rm cont,0}\,. $$

Now, any $X\in\mc H$ can be decomposed into $X\,=\,\Pi_{\rm cont,0}X\,+\,\Pi_{K,0}X$. Hence, from the previous equality we get
$$
\Pi_{\rm cont,\eta}X\,=\,\Pi_{\rm cont,0}X\,+\,\Pi_{\rm cont,\eta}\Pi_{K,0}X\,.
$$
Then, we have just to understand the action of $\Pi_{\rm cont,\eta}$ on ${\rm Ker}\,\mc B_0$.

Let $Z\in{\rm Ker}\,B_0$. By Lemma \ref{l:ker-ort} we know that $Z_\eta:=\Lambda_\eta^{-1}Z\in{\rm Ker}\,B_\eta$. On the other hand,
by hypothesis on the family $\bigl(\Lambda_\eta\bigr)_\eta$, we can write
$\Lambda_\eta\,=\,\Id\,+\,\eta\,\mc{D}_\eta$, for a suitable bounded family of self-adjoint operators
$\bigl(\mc{D}_\eta\bigr)_\eta\,\subset\,\mc L(\mc H)$. Then one gathers
\begin{eqnarray*}
\Pi_{\rm cont,\eta}Z & = & \Pi_{\rm cont,\eta}\Lambda_\eta Z_\eta\;=\;\Pi_{\rm cont,\eta}Z_\eta\,+\,
\eta\,\Pi_{\rm cont,\eta}\,\mc{D}_\eta\,Z_\eta \\
& = & \eta\,\Pi_{\rm cont,\eta}\,\mc{D}_\eta\,Z_\eta\,,
\end{eqnarray*}
where the last equality follows from the fact that $Z_\eta\in{\rm Ker}\,B_\eta$.

To complete the proof of the corollary, we have just to show that
\begin{equation} \label{est:Pi_eta}
\left\|\Pi_{\rm cont,\eta}\,\mc{D}_\eta\,Z_\eta\right\|_0\,\leq\,C\,\|Z\|_0\,,
\end{equation}
for a constant $C>0$ independent of $\eta$.

We already know that $\sup_{\eta}\left\|\mc D_\eta\right\|_{\mc L(\mc H)}\,\leq\,C$. So, let us estimate
$\left\|\Pi_{\rm cont,\eta}\right\|_{\mc L(\mc H)}$. For all $Y\in\mc H$, we have
\begin{eqnarray*}
\left\|\Pi_{\rm cont,\eta}\,Y\right\|_0^2 & = & \langle\Pi_{\rm cont,\eta}\,Y\,,\,\Pi_{\rm cont,\eta}\,Y\rangle_0
\;=\;\langle\Pi_{\rm cont,\eta}\,Y\,,\,\Lambda_\eta^{-1}\,\Pi_{\rm cont,\eta}\,Y\rangle_\eta \\
& \leq & \left\|\Pi_{\rm cont,\eta}\,Y\right\|_\eta^2\,\left\|\Lambda_\eta^{-1}\right\|_{\mc L(\eta)}\,.
\end{eqnarray*}
For the former term, we use that $\Pi_{\rm cont,\eta}$ is an orthogonal projection with respect to the scalar product $\mc S_\eta$,
so its $\eta$-norm is bounded by $1$: using then the monotonicity property of the $\Lambda_\eta$'s and the continuity
of $\Lambda_1$ with respect to $\mc S_0$, we finally get
$$
\left\|\Pi_{\rm cont,\eta}\,Y\right\|_\eta^2\,\leq\,\left\|Y\right\|_\eta^2\,\leq\,C\,\left\|Y\right\|_0^2\,.
$$
For the latter term, we argue exactly as above: for all $Y\in\mc H$,
\begin{eqnarray*}
\left\|\Lambda^{-1}_{\eta}\,Y\right\|_\eta^2 & = & \langle\Lambda^{-1}_{\eta}\,Y\,,\,\Lambda^{-1}_{\eta}\,Y\rangle_\eta\;=\;
\langle\Lambda^{-1}_{\eta}\,Y\,,\,Y\rangle_0 \\
& \leq & C\,\|Y\|_0^2\;\leq\;C\,\|Y\|_\eta^2\,,
\end{eqnarray*}
where the last estimate comes from the monotonicity hypothesis.
Combining these two last inequalities together, we easily deduce \eqref{est:Pi_eta}, from which the corollary follows.
\end{proof}

We need also the following simple lemma.
\begin{lemma} \label{l:conv_measures}
Under the hypotheses of Theorem \ref{th:RAGE_eps}, let us fix a $X\in\mc H$ and consider the spectral measure
$\mu_\eta$ associated to the element $\Pi_{\rm cont,\eta}\,\Lambda^{-1}_\eta\,\Pi_{\rm cont,0}\,X$.

Then one has $\mu_\eta(\R)\,\longrightarrow\,\mu_0(\R)$ for $\eta\ra0$.
\end{lemma}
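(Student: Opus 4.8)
The plan is to reduce everything to the elementary fact that the total mass $\mu(\R)$ of the spectral measure of an element $Y\in\mc H$ relative to a self-adjoint operator on the Hilbert space $(\mc H,\mc S_\eta)$ equals the squared norm $\|Y\|_\eta^2$. Writing $Y_\eta:=\Pi_{\rm cont,\eta}\,\Lambda_\eta^{-1}\,\Pi_{\rm cont,0}\,X$, this gives $\mu_\eta(\R)=\|Y_\eta\|_\eta^2=\langle Y_\eta,\Lambda_\eta Y_\eta\rangle_0$; and since $\Lambda_0=\Id$ and $\Pi_{\rm cont,0}$ is idempotent, $\mu_0(\R)=\|\Pi_{\rm cont,0}X\|_0^2$. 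So it suffices to prove $\|Y_\eta\|_\eta^2\ra\|\Pi_{\rm cont,0}X\|_0^2$ as $\eta\ra0$.

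First I would expand each factor in $Y_\eta$ to first order in $\eta$, with remainders bounded uniformly in $\eta$ in $\mc L(\mc H)$ (the reference metric $\mc S_0$). By the bounded-holomorphy assumption (Remark \ref{r:bounded-holom}), exactly as in the proof of Corollary \ref{c:projection}, one has $\Lambda_\eta=\Id+\eta\,\mc D_\eta$ with $\sup_\eta\|\mc D_\eta\|_{\mc L(\mc H)}<\infty$; consequently $\Lambda_\eta^{-1}=\Id-\eta\,\Lambda_\eta^{-1}\mc D_\eta=:\Id+\eta\,\mc E_\eta$, and $\sup_\eta\|\mc E_\eta\|_{\mc L(\mc H)}<\infty$ because the monotonicity $\Id=\Lambda_0\leq\Lambda_\eta$ yields $\|\Lambda_\eta^{-1}\|_{\mc L(\mc H)}\leq1$. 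Finally, Corollary \ref{c:projection} gives $\Pi_{\rm cont,\eta}=\Pi_{\rm cont,0}+\eta\,\mc R_\eta$ with $\sup_\eta\|\mc R_\eta\|_{\mc L(\mc H)}\leq C$. Multiplying these three expansions and using $\Pi_{\rm cont,0}^2=\Pi_{\rm cont,0}$, all cross terms carry a factor $\eta$, so $Y_\eta=\Pi_{\rm cont,0}X+\eta\,\mc T_\eta X$ for a family $\bigl(\mc T_\eta\bigr)_\eta$ with $\sup_\eta\|\mc T_\eta\|_{\mc L(\mc H)}<\infty$.

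Then I would substitute this into $\mu_\eta(\R)=\langle Y_\eta,(\Id+\eta\,\mc D_\eta)Y_\eta\rangle_0$ and expand: the leading term is $\langle\Pi_{\rm cont,0}X,\Pi_{\rm cont,0}X\rangle_0=\|\Pi_{\rm cont,0}X\|_0^2$, and each remaining term is $\eta$ times a quantity bounded by $C\|X\|_0^2$ uniformly in $\eta$ (using that all the norms $\|\cdot\|_\eta$ are mutually equivalent, as recorded in Remark \ref{r:RAGE_param}(iv)). Hence $\mu_\eta(\R)=\|\Pi_{\rm cont,0}X\|_0^2+O(\eta)\ra\mu_0(\R)$, which is the assertion.

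There is no serious obstacle here: the lemma is a routine consequence of the first-order expansions of $\Lambda_\eta$, $\Lambda_\eta^{-1}$ and $\Pi_{\rm cont,\eta}$ (already available from Remark \ref{r:bounded-holom} and Corollary \ref{c:projection}) together with the identification of the total spectral mass with a squared Hilbert-space norm. The only point needing a little care is to ensure that every remainder operator is bounded in $\mc L(\mc H)$ uniformly in $\eta$, which is precisely what the monotonicity hypothesis on $\bigl(\Lambda_\eta\bigr)_\eta$ and Corollary \ref{c:projection} supply.
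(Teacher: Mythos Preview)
Your proposal is correct and follows essentially the same approach as the paper: both identify $\mu_\eta(\R)$ with the squared $\mc S_\eta$-norm of $Y_\eta$ and then use the first-order expansions $\Lambda_\eta^{-1}=\Id+O(\eta)$ and $\Pi_{\rm cont,\eta}=\Pi_{\rm cont,0}+O(\eta)$ (from Remark \ref{r:bounded-holom} and Corollary \ref{c:projection}) to conclude $\mu_\eta(\R)=\|\Pi_{\rm cont,0}X\|_0^2+O(\eta)$. The paper streamlines the computation slightly by first using that $\Pi_{\rm cont,\eta}$ is an $\mc S_\eta$-orthogonal projection to write $\|Y_\eta\|_\eta^2=\langle\Pi_{\rm cont,\eta}\Lambda_\eta^{-1}\Pi_{\rm cont,0}X,\Pi_{\rm cont,0}X\rangle_0$ before expanding, whereas you expand all three factors at once; but this is only a cosmetic difference.
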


\begin{proof}
By definition of spectral measure and the spectral theorem, one has
\begin{eqnarray*}
\mu_\eta(\R) & = & \int_\R d\mu_\eta(x)\;=\;\left\|\Pi_{\rm cont,\eta}\,\Lambda^{-1}_\eta\,\Pi_{\rm cont,0}\,X\right\|^2_{\eta} \\
& = & \langle\Pi_{\rm cont,\eta}\,\Lambda^{-1}_\eta\,\Pi_{\rm cont,0}\,X\,,\,\Lambda^{-1}_\eta\,\Pi_{\rm cont,0}\,X\rangle_\eta\,.
\end{eqnarray*}
Now, by definition of $\mc S_\eta$ and Corollary \ref{c:projection}, we have
$$
\mu_\eta(\R)\,=\,\langle\Pi_{\rm cont,\eta}\,\Lambda^{-1}_\eta\,\Pi_{\rm cont,0}\,X\,,\,\Pi_{\rm cont,0}\,X\rangle_0\,=\,
\langle\Lambda^{-1}_\eta\,\Pi_{\rm cont,0}\,X\,,\,\Pi_{\rm cont,0}\,X\rangle_0\,+\,O(\eta)\,,
$$
where we used also that $\Pi_{\rm cont,0}$ is self-adjoint with respect to $\mc S_0$ and $\Pi_{\rm cont,0}^2\,=\,\Pi_{\rm cont,0}$.
At this point, thanks to the hypothesis over the family $\bigl(\Lambda_\eta\bigr)_\eta$, we can write
$\Lambda^{-1}_\eta\,=\,\Id\,+\,\eta\,\wtilde{\mc D}_\eta$, for a suitable bounded family of self-adjoint operators
$\bigl(\wtilde{\mc D}_\eta\bigr)_\eta\subset\mc L(\mc H)$ (see also Chapter VII of \cite{K}, in particular Paragraph 3.2). Then
the previous relation becomes
$$
\mu_\eta(\R)\,=\,\langle\Pi_{\rm cont,0}\,X\,,\,\Pi_{\rm cont,0}\,X\rangle_0\,+\,O(\eta)\,=\,
\mu_0(\R)\,+\,O(\eta)\,,
$$
and this proves the claim of the lemma.
\end{proof}

We can finally prove Theorem \ref{th:RAGE_eps}. We will follow the main lines of the proof given in \cite{Cyc-Fr-K-S} (see Theorem 5.8,
Chapter 5).
\begin{proof}[Proof of Theorem \ref{th:RAGE_eps}]
First of all, we notice that, up to perform the change of variable $\tau=t/\veps$, our claim is equivalent to show that
$$
\lim_{\veps\ra0}\,\left\|\frac{\veps}{T}\int^{T/\veps}_0\exp\left(-i\,t\,\mc B_{\sigma(\veps)}\right)\,\mc K_{\sigma(\veps)}\,
\Pi_{{\rm cont},\sigma(\veps)}\,\exp\left(i\,t\,\mc B_{\sigma(\veps)}\right)\,dt\right\|_{\mc L(\veps)}\,=\,0\,.
$$
For notation convenience, for the moment we keep writing $\eta$ instead of $\sigma(\veps)$.

Since a compact operator can be approximated (in the norm topology) by finite rank operators, and each finite rank operator
can be written as a finite sum of operators of rank $1$, it is enough to restrict to the case of ${\rm rk}\,\mc K=1$.

Recall here point (vi) of Remark \ref{r:RAGE_param}: approximating $\mc K$ gives the ``same approximation'' for all $\mc K_\eta$
(up to the isomorphism $\Lambda_\eta$).
We are not able to exploit this reduction to rank $1$ operators if the approximation itself depended on the particular
compact operator $\mc K_\eta$, with no relations between them.

Since ${\rm rk}\,\mc K=1$, we can represent $\mc K$ with respect to the reference scalar product $\mc S_0$ in the form
$\mc K\,\vphi\,=\,\langle X\,,\,\vphi\rangle_0\;Y$,
for suitable $X,Y\in\mc H$. Hence, by definitions of $\mc K_\eta$
and $\mc S_\eta$, we have
\begin{eqnarray*}
\mc K_\eta\,\vphi\;=\;\Lambda_\eta^{-1}\,\mc K\,\vphi & = & \langle X\,,\,\vphi\rangle_0\;\Lambda_\eta^{-1}\,Y\;=\;
\langle\Lambda_\eta^{-1}\,X\,,\,\vphi\rangle_\eta\;\Lambda_\eta^{-1}\,Y \\
& = & \langle X_\eta\,,\,\vphi\rangle_\eta\;Y_\eta\,,
\end{eqnarray*}
where we have denoted $X_\eta=\Lambda_\eta^{-1}\,X$ and $Y_\eta=\Lambda_\eta^{-1}\,Y$. Therefore, its adjoint
$\mc K_\eta^{*(\eta)}$ (with respect to the scalar product $\mc S_\eta$) is given by
$$
\mc K_\eta^{*(\eta)}\,\vphi\;=\;\langle Y_\eta\,,\,\vphi\rangle_\eta\;X_\eta\,.
$$

Now, as in \cite{Cyc-Fr-K-S}, for any $\veps\in[0,1]$ fixed and denoting again $\eta=\sigma(\veps)$, we define the operator
\begin{eqnarray*}
Q_\veps(T) & := & \frac{\veps}{T}\int^{T/\veps}_0e^{-it\mc B_\eta}\,\mc K_\eta\,\Pi_{\rm cont,\eta}\,e^{it\mc B_\eta}\,dt \\
& = & \frac{\veps}{T}\,\int^{T/\veps}_0\langle e^{-it\mc B_\eta}\,\Pi_{\rm cont,\eta}\,X_\eta\,,\;\cdot\;\rangle_\eta\,
e^{-it\mc B_\eta}\,Y_\eta\,dt
\end{eqnarray*}
and its adjoint (again, with respect to $\mc S_\eta$)
$$
Q^{*(\eta)}_\veps(T)\;=\;\frac{\veps}{T}\int^{T/\veps}_0\langle e^{-it\mc B_\eta}\,\Pi_{\rm cont,\eta}\,Y_\eta\,,\;\cdot\;\rangle_\eta\,
e^{-it\mc B_\eta}\,X_\eta\,dt\,.
$$
Then, for all $\vphi\in\mc H$, the following identity holds true:
\begin{eqnarray*}
& & \hspace{-0.7cm}
Q_\veps(T)\,Q^{*(\eta)}_\veps(T)\,\vphi\;=\;\frac{\veps}{T}\int^{T/\veps}_0\langle e^{-it\mc B_\eta}\,\Pi_{\rm cont,\eta}\,X_\eta\,,\,
Q^{*(\eta)}_\veps(T)\,\vphi\rangle_\eta\,e^{-it\mc B_\eta}\,Y_\eta\,dt \\
& & \qquad=\;\frac{\veps^2}{T^2}\int^{T/\veps}_0\int^{T/\veps}_0\langle e^{-it\mc B_\eta}\,\Pi_{\rm cont,\eta}\,X_\eta\,,\,
e^{-is\mc B_\eta}\,\Pi_{\rm cont,\eta}\,X_\eta\rangle_\eta\,\langle e^{is\mc B_\eta}\,Y_\eta\,,\,\vphi\rangle_\eta\,e^{-it\mc B_\eta}\,Y_\eta\,ds\,dt\,.
\end{eqnarray*}

Therefore, we can write
\begin{eqnarray*}
& & \hspace{-1.2cm}
\left\|\frac{\veps}{T}\int^{T/\veps}_0e^{-it\mc B_\eta}\,\mc K_\eta\,\Pi_{\rm cont,\eta}\,e^{it\mc B_\eta}\,dt\right\|^2_{\mc L(\eta)}\;=\;
\left\|Q_\veps(T)\right\|^2_{\mc L(\eta)}\;=\;\left\|Q_\veps(T)\;Q^{*(\eta)}_\veps(T)\right\|_{\mc L(\eta)} \\
& & \qquad\qquad\qquad
\leq\;\frac{\veps^2}{T^2}\,\left\|Y_\eta\right\|^2_\eta\,\int^{T/\veps}_0\int^{T/\veps}_0\left|\langle e^{-it\mc B_\eta}\,\Pi_{\rm cont,\eta}\,X_\eta\,,\,
e^{-is\mc B_\eta}\,\Pi_{\rm cont,\eta}\,X_\eta\rangle_\eta\right|\,ds\,dt\,.
\end{eqnarray*}
By definitions and the continuity of the map $\eta\,\mapsto\,\Lambda_\eta$, we infer
$$
\left\|Y_\eta\right\|^2_\eta\,=\,\langle\Lambda_\eta^{-1}Y\,,\,\Lambda_\eta^{-1}Y\rangle_\eta\,=\,
\langle\Lambda_\eta^{-1}Y\,,\,Y\rangle_0\,\leq\,C\,\|Y\|_0^2\,,
$$
and applying the Cauchy-Schwarz inequality we arrive at
\begin{eqnarray*}
& & \hspace{-1cm}
\left\|\frac{\veps}{T}\int^{T/\veps}_0e^{-it\mc B_\eta}\,\mc K_\eta\,\Pi_{\rm cont,\eta}\,e^{it\mc B_\eta}\,dt\right\|^2_{\mc L(\eta)} \\
& & \qquad\qquad\leq\;C\,\|Y\|_0^2\,\left(\frac{\veps^2}{T^2}\int^{T/\veps}_0\int^{T/\veps}_0\left|\langle\Pi_{\rm cont,\eta}\,X_\eta\,,\,
e^{i(t-s)\mc B_\eta}\,\Pi_{\rm cont,\eta}\,X_\eta\rangle_\eta\right|^2\,ds\,dt\right)^{\!1/2}\,.
\end{eqnarray*}

We now focus on the integral term on the right-hand side of the previous inequality.
Notice that, since $\Lambda_\eta^{-1}\,\Pi_{K,0}X\in{\rm Ker}\,\mc B_\eta$, one can write
\begin{equation} \label{eq:def_meas}
\Pi_{\rm cont,\eta}\,X_\eta\,=\,\Pi_{\rm cont,\eta}\,\Lambda_\eta^{-1}\,X\,=\,
\Pi_{\rm cont,\eta}\,\Lambda^{-1}_\eta\,\Pi_{\rm cont,0}\,X\,.
\end{equation}
Then, coming back to the notation $\eta=\sigma(\veps)$, let us consider the quantity
$$
\mc J_\veps\,:=\,\frac{\veps^2}{T^2}\int^{T/\veps}_0\!\!\int^{T/\veps}_0\left|
\langle\Pi_{{\rm cont},\sigma(\veps)}\,\Lambda^{-1}_{\sigma(\veps)}\,\Pi_{\rm cont,0}\,X\,,\,
e^{i(t-s)\mc B_{\sigma(\veps)}}\,\Pi_{{\rm cont},\sigma(\veps)}\,\Lambda^{-1}_{\sigma(\veps)}\,
\Pi_{\rm cont,0}\,X\rangle_{\sigma(\veps)}\right|^2\,ds\,dt\,.
$$

We denote by $\mu_{\sigma(\veps)}$ the spectral measure associated to
$\Pi_{{\rm cont},\sigma(\veps)}\,\Lambda^{-1}_{\sigma(\veps)}\,\Pi_{\rm cont,0}X$.
Therefore, repeating the computations in \cite{Cyc-Fr-K-S}, it follows that
\begin{eqnarray*}
\mc J_\veps & = &
\frac{\veps^2}{T^2}\int^{T/\veps}_0\int^{T/\veps}_0\left(\int_\R\int_\R\exp\bigl(i\,(t-s)\,(x-y)\bigr)\,
d\mu_{\sigma(\veps)}(x)\,d\mu_{\sigma(\veps)}(y)\right)ds\,dt \\
& \leq & \int_\R\int_\R\left(\frac{\sin\bigl((x-y)T/(2\veps)\bigr)}{(x-y)T/(2\veps)}\right)^2\,
d\mu_{\sigma(\veps)}(x)\,d\mu_{\sigma(\veps)}(y)\,.
\end{eqnarray*}

At this point, the convergence argument is the same used in the proof of Theorem \ref{th:wiener} above: we easily obtain that
$\mc I_\veps\longrightarrow0$ for $\veps\ra0$, provided we show that the measures $\mu_{\sigma(\veps)}\longrightarrow\mu_0$ in the sense
of setwise convergence.

\medbreak
For notational convenience, we set $\eta=\sigma(\veps)$ once again, and study the convergence for $\eta\ra0$.

We start by noticing that, by definition of $\mc B_\eta\,=\,\mc B_0\circ\Lambda_\eta$ and continuity in $\eta$ of the family $\Lambda_\eta$, one immediately infers that
$\mc B_\eta\,\vphi\,\ra\,\mc B_0\,\vphi$ for all $\vphi\in\mc H$. Then, by Theorem VIII.25 of \cite{R-S_I}, $\mc B_\eta$ converges to $\mc B_0$ in the strong
resolvent sense. In turn, by Theorem VIII.20 this property implies that, for all fixed function $f\in\mc C_b(\R)$, one has the convergence
\begin{equation} \label{eq:conv_f}
f(\mc B_\eta)\vphi\,\longrightarrow\,f(\mc B_0)\vphi\qquad\qquad\mbox{ for all }\quad \vphi\in\mc H\,.
\end{equation}

Let now $f\in\mc C_b(\R)$, and let us consider, for all $\eta\in[0,1]$, the quantity $\lan\mu_\eta\,,\,f\ran_{\mc M\times\mc C}$, where the brackets
$\lan\,\cdot\,,\,\cdot\,\ran_{\mc M\times\mc C}$ represent the duality pair between $\mc C_b(\R)$ and the set $\mc M_{\mbb Ba}(\R)$ of Baire measures over $\R$.
By spectral theorem, we have that
\begin{eqnarray*}
\lan\mu_\eta\,,\,f\ran_{\mc M\times\mc C}\,=\,\int f\,d\mu_\eta & = &
\lan\Pi_{\rm cont,\eta}\,\Lambda^{-1}_\eta\,\Pi_{\rm cont,0}\,X\,,\,f(\mc B_\eta)\,\Lambda^{-1}_\eta\,\Pi_{\rm cont,0}\,X\rangle_\eta \\
& = & \lan\Pi_{\rm cont,0}\,X\,,\,f(\mc B_\eta)\,\Pi_{\rm cont,0}\,X\rangle_0\,+\,O(\eta)\,,
\end{eqnarray*}
where, in the last step, we have used also Corollary \ref{c:projection} and the fact that $\Lambda^{-1}_\eta\,=\,\Id\,+\,\eta\,\wtilde{\mc D}_\eta$, as in the proof to Lemma \ref{l:conv_measures}.
Hence, using \eqref{eq:conv_f} we gather that, in the limit for $\eta$ going to $0$,
$$
\lan\mu_\eta\,,\,f\ran_{\mc M\times\mc C}\,=\,\int f\,d\mu_\eta\,\longrightarrow\,\lan\Pi_{\rm cont,0}\,X\,,\,f(\mc B_0)\,\Pi_{\rm cont,0}\,X\rangle_0\,=\,
\int f\,d\mu_0\,=\,\lan\mu_0\,,\,f\ran_{\mc M\times\mc C}
$$
for all $f\in\mc C_b(\R)$. Said in an equivalent way, we have just proved that $\mu_\eta\Rightarrow\mu_0$.

But then, by Remark \ref{r:ba-bo} below and Lemma \ref{l:conv_measures}, we can apply Corollary \ref{c:weak-open}. Indeed, each $\mu_\eta$ does not charge the points, being $\mu_\eta$
the spectral measure associated to an element projected onto the continuum spectrum of the operator $\mc B_\eta$. So, from Corollary \ref{c:weak-open} we gather the convergence
$\mu_\eta(I)\,\longrightarrow\,\mu_0(I)$ whenever $I\subset\R$ is an open or closed interval. By Proposition \ref{p:base}, this implies that
$\mu_\eta\,\longrightarrow\,\mu_0$ in the setwise topology.

This concludes the proof of Theorem \ref{th:RAGE_eps}.
\end{proof}

\begin{rem} \label{r:semigroups}
We proved the previous theorem by direct computations. Notice that one could also compare the two propagators, related
to $\mc B_0$ and to $\mc B_{\sigma(\veps)}$, and use properties from perturbation theory of semigroups: we refer e.g. to Theorem 2.19 of
\cite{K}, Chapter IX (see also Theorem 13.5.8 of \cite{Hille-P}). However, it seems to us that these results fail to provide
uniform bounds on time intervals $[0,T/\veps]$ when $\veps\ra0$: this is why we preferred to prove estimates ``by hands''.

Alternatively, one could use the Baker-Campbell-Hausdorff formula (see for instance \cite{Magnus} and \cite{Foguel}) in order to write the propagator
$\exp\bigl(it\mc B_{\sigma(\veps)}\bigr)$ as the propagator $\exp\bigl(it\mc B_0\bigr)$ related to the unperturbed operator,
plus a uniformly bounded remainder of order $\sigma(\veps)$.
\end{rem}

Also in this case, we have the analogue of Corollary \ref{c:RAGE}.
\begin{coroll} \label{c:RAGE_eps}
Under the hypotheses of Theorem \ref{th:RAGE_eps}, suppose moreover that $\mc{K}$ is self-adjoint, with $\mc{K}\geq0$.

Then there exist a constant $C>0$ and a function $\mu$, with $\mu(\veps)\ra0$ for $\veps\ra0$, such that:
\begin{itemize}
 \item[1)] for any $Y\in\mc H$ and any $T>0$, one has
 $$
\frac{1}{T}\int^T_0\left\|\mc{K}_{\sigma(\veps)}^{1/2}\,e^{i\,t\,\mc{B}_{\sigma(\veps)}/\veps}\,
\Pi_{{\rm cont},\sigma(\veps)}Y\right\|^2_{\sigma(\veps)}\,dt\,\leq\,C\,\mu(\veps)\,\|Y\|_{0}^2\,;
 $$
 \item[2)] for any $T>0$ and any $X\in L^2\bigl([0,T];\mc H\bigr)$, one has
$$
\frac{1}{T^2}\left\|\mc{K}_{\sigma(\veps)}^{1/2}\,\Pi_{{\rm cont},\sigma(\veps)}\int^t_0e^{i\,(t-\tau)\,\mc{B}_{\sigma(\veps)}/\veps}\,
X(\tau)\,d\tau\right\|^2_{L^2([0,T];(\mc H,\mc S_{\sigma(\veps)}))}\,\leq\,C\,\mu(\veps)\,\left\|X\right\|^2_{L^2([0,T];\mc H)}\,.
 $$
\end{itemize}
\end{coroll}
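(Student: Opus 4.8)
The plan is to mimic the passage from the RAGE theorem to Corollary~\ref{c:RAGE}, but with Theorem~\ref{th:RAGE_eps} in place of Theorem~\ref{th:RAGE} and with systematic bookkeeping of the $\sigma(\veps)$-dependent metrics; write $\eta=\sigma(\veps)$ throughout. First I would record three elementary facts. (a) From $\langle\mc K_\eta u,v\rangle_\eta=\langle\Lambda_\eta^{-1}\mc K u,\Lambda_\eta v\rangle_0=\langle\mc K u,v\rangle_0$ one reads off that $\mc K_\eta=\Lambda_\eta^{-1}\mc K$ is self-adjoint and non-negative with respect to $\mc S_\eta$; hence its $\mc S_\eta$-square root $\mc K_\eta^{1/2}$ is well defined and $\|\mc K_\eta^{1/2}w\|_\eta^2=\langle\mc K_\eta w,w\rangle_\eta$. (b) Since $\mc B_\eta$ is bounded and $\mc S_\eta$-self-adjoint (Remark~\ref{r:RAGE_param}(i)) and ${\rm Ker}\,\mc B_\eta$ is a reducing subspace, $\Pi_{\rm cont,\eta}$ commutes with $\mc B_\eta$, hence with the $\mc S_\eta$-unitary group $e^{it\mc B_\eta/\veps}$. (c) By the monotonicity $\Lambda_\eta\leq\Lambda_1$ and the $\mc S_0$-boundedness of $\Lambda_1$ (Remark~\ref{r:RAGE_param}(iv)), one has $\|w\|_\eta^2\leq C\|w\|_0^2$ with $C=\|\Lambda_1\|_{\mc L(\mc H)}$ independent of $\eta$; this is precisely the constant appearing in the statement.

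For part~1), fix $Y\in\mc H$ and $T>0$. Using (a), that $e^{it\mc B_\eta/\veps}$ is $\mc S_\eta$-unitary with $\mc S_\eta$-adjoint $e^{-it\mc B_\eta/\veps}$, and (b) together with $\Pi_{\rm cont,\eta}^2=\Pi_{\rm cont,\eta}$, one obtains for each $t$ the identity $\|\mc K_\eta^{1/2}e^{it\mc B_\eta/\veps}\Pi_{\rm cont,\eta}Y\|_\eta^2=\langle e^{-it\mc B_\eta/\veps}\mc K_\eta\Pi_{\rm cont,\eta}e^{it\mc B_\eta/\veps}\Pi_{\rm cont,\eta}Y\,,\,\Pi_{\rm cont,\eta}Y\rangle_\eta$. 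Averaging in $t$ over $[0,T]$ then gives
\[
\frac{1}{T}\int_0^T\bigl\|\mc K_\eta^{1/2}e^{it\mc B_\eta/\veps}\Pi_{\rm cont,\eta}Y\bigr\|_\eta^2\,dt
=\Bigl\langle\,\Bigl(\frac{1}{T}\int_0^T e^{-it\mc B_\eta/\veps}\,\mc K_\eta\,\Pi_{\rm cont,\eta}\,e^{it\mc B_\eta/\veps}\,dt\Bigr)\Pi_{\rm cont,\eta}Y\,,\,\Pi_{\rm cont,\eta}Y\,\Bigr\rangle_\eta .
\]
Bounding the right-hand side by the $\mc L(\veps)$-operator norm of the time-averaged operator times $\|\Pi_{\rm cont,\eta}Y\|_\eta^2\leq\|Y\|_\eta^2\leq C\|Y\|_0^2$, and letting $\mu(\veps)$ denote that operator norm — which tends to $0$ as $\veps\ra0$ by Theorem~\ref{th:RAGE_eps} — yields the first inequality.

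For part~2), set $W(t):=\mc K_\eta^{1/2}\Pi_{\rm cont,\eta}\int_0^t e^{i(t-\tau)\mc B_\eta/\veps}X(\tau)\,d\tau$; commuting $\Pi_{\rm cont,\eta}$ through the group under the integral by (b) and pulling the bounded operator $\mc K_\eta^{1/2}$ inside gives $W(t)=\int_0^t\mc K_\eta^{1/2}e^{i(t-\tau)\mc B_\eta/\veps}\Pi_{\rm cont,\eta}X(\tau)\,d\tau$. Then Cauchy--Schwarz in $\tau$, integration in $t\in[0,T]$, Fubini and the substitution $s=t-\tau$ give
\[
\bigl\|W\bigr\|_{L^2([0,T];(\mc H,\mc S_\eta))}^2\;\leq\;T\int_0^T\Bigl(\int_0^T\bigl\|\mc K_\eta^{1/2}e^{is\mc B_\eta/\veps}\Pi_{\rm cont,\eta}X(\tau)\bigr\|_\eta^2\,ds\Bigr)\,d\tau ,
\]
and estimating the inner integral by part~1) applied with $Y=X(\tau)$ produces the bound $\leq T^2\,C\,\mu(\veps)\,\|X\|_{L^2([0,T];\mc H)}^2$; dividing by $T^2$ is the second claim.

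The genuinely hard input here, Theorem~\ref{th:RAGE_eps}, is already established, so there is no serious obstacle; the only point requiring care is the metric bookkeeping — every adjoint, the square root $\mc K_\eta^{1/2}$, and the operator norm must be referred to the correct metric $\mc S_{\sigma(\veps)}$, and the norm-equivalence constant $C=\|\Lambda_1\|_{\mc L(\mc H)}$ must be $\veps$-independent. It is exactly this uniformity that allows a single function $\mu(\veps)\ra0$ to be factored out, at the price of the fixed multiplicative constant $C$ in the statement (absent in Corollary~\ref{c:RAGE}, where no change of metric occurs).
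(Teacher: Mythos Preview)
Your proof is correct and is exactly the argument the paper implicitly intends: the paper does not actually give a proof of this corollary, merely stating it as ``the analogue of Corollary~\ref{c:RAGE}'' (which in turn is quoted from \cite{F-G-N} without proof), so your derivation from Theorem~\ref{th:RAGE_eps} via the standard quadratic-form/Duhamel manipulations, with the $\mc S_{\sigma(\veps)}$-metric bookkeeping and the uniform constant $C=\|\Lambda_1\|_{\mc L(\mc H)}$, fills precisely the gap left there.
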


Let us conclude this part giving a sufficient condition in order to guarantee the spectral property $\sigma_p(\mc B_\eta)\,=\,\{0\}$
at least for $\eta$ close to $0$: we need $0$ to be an \emph{isolated eigenvalue}\footnote{Here, we mean ``isolated'' in the sense
of \cite{K}, Chapter III, Paragraph 6.5: it is an isolated point not just of $\sigma_p$,
but of the whole spectrum of the operator.} of $\mc B_0$.
\begin{prop} \label{p:spectral}
Let $(\mc H,\mc S_0)$ be a Hilbert space. Let $\mc B_0\in\mc L(\mc H)$ be a self-adjoint operator such that
$\sigma_p(\mc B_0)\,=\,\{0\}$, and suppose also that $0$ is an isolated eigenvalue. \\
Let $\bigl(\Lambda_\eta\bigr)_{\eta\in[0,1]}$ be a bounded-holomorphic family of automorphisms
of $\mc H$, with $\Lambda_0=\Id$, such that each $\Lambda_\eta$ is self-adjoint.
For any $\eta\in[0,1]$, define the operator $\mc B_\eta\,:=\,\mc B_0\circ\Lambda_\eta$.

Then $\sigma_p(\mc B_\eta)\,=\,\{0\}$ for $\eta$ small enough.
\end{prop}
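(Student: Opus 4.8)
The plan is to reduce $\mc B_\eta$ to a self-adjoint operator and then exploit the stability of the isolated point $0$ of $\sigma(\mc B_0)$ under the perturbation. Since $\bigl(\Lambda_\eta\bigr)_{\eta\in[0,1]}$ is bounded-holomorphic with $\Lambda_0=\Id$, for $\eta$ small enough $\Lambda_\eta$ is a positive self-adjoint isomorphism of $\mc H$ with $\|\Lambda_\eta-\Id\|_{\mc L(\mc H)}\leq C\eta$, and the same holds for $\Lambda_\eta^{\pm 1/2}$ (functional calculus, or \cite{K}, Ch.~VII). One can then write
$$
\mc B_\eta\,=\,\mc B_0\,\Lambda_\eta\,=\,\Lambda_\eta^{-1/2}\,\wtilde{\mc B}_\eta\,\Lambda_\eta^{1/2}\,,\qquad\mbox{with}\qquad\wtilde{\mc B}_\eta\,:=\,\Lambda_\eta^{1/2}\,\mc B_0\,\Lambda_\eta^{1/2}\,,
$$
so that $\wtilde{\mc B}_\eta$ is bounded and self-adjoint with respect to $\mc S_0$ and $\mc B_\eta$ is similar to it through the isomorphism $\Lambda_\eta^{1/2}$; in particular $\sigma_p(\mc B_\eta)=\sigma_p(\wtilde{\mc B}_\eta)$, and expanding the product gives $\|\wtilde{\mc B}_\eta-\mc B_0\|_{\mc L(\mc H)}\leq C'\eta$.

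Next I would localise the spectrum. As in Lemma \ref{l:ker-ort}, $\Lambda_\eta$ being invertible forces ${\rm Ker}\,\mc B_\eta=\Lambda_\eta^{-1}\,{\rm Ker}\,\mc B_0\neq\{0\}$, hence $0\in\sigma_p(\mc B_\eta)$ for every small $\eta$. To show there is nothing else, I would fix $\delta>0$ with $\sigma(\mc B_0)\cap\{0<|z|\leq 2\delta\}=\emptyset$ (possible because $0$ is isolated in $\sigma(\mc B_0)$). On the circle $|z|=\delta$ one has $\|(z-\mc B_0)^{-1}\|\leq\delta^{-1}$, so for $\eta$ small the Neumann series for $\bigl(\Id-(\wtilde{\mc B}_\eta-\mc B_0)(z-\mc B_0)^{-1}\bigr)^{-1}$ converges uniformly in $z$; thus $(z-\wtilde{\mc B}_\eta)^{-1}$ exists, is uniformly bounded on $\{|z|=\delta\}$ and converges in norm to $(z-\mc B_0)^{-1}$. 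Consequently the Riesz projection $P_\eta:=\frac{1}{2\pi i}\oint_{|z|=\delta}(z-\wtilde{\mc B}_\eta)^{-1}\,dz$ is well defined, commutes with $\wtilde{\mc B}_\eta$, and $\|P_\eta-P_0\|_{\mc L(\mc H)}\ra 0$, where $P_0=E_{\mc B_0}(\{0\})$ is the orthogonal projection onto ${\rm Ker}\,\mc B_0$. This decomposes $\sigma(\wtilde{\mc B}_\eta)$ into an inner part contained in $\{|z|\leq C'\eta\}$ and an outer part contained in $\{|z|\geq\delta\}$, and reduces the statement to proving that neither part contains a nonzero eigenvalue.

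For the outer part, if $\wtilde{\mc B}_\eta v=\lambda v$ with $|\lambda|\geq\delta$ and $\|v\|=1$, then $\|(\mc B_0-\lambda)v\|=\|(\mc B_0-\wtilde{\mc B}_\eta)v\|\leq C'\eta$, so ${\rm dist}\,\bigl(\lambda,\sigma(\mc B_0)\setminus\{0\}\bigr)\leq C'\eta$; the multiplicative structure $\mc B_\eta=\mc B_0\,\Lambda_\eta$ is then used to turn this clustering into a contradiction — writing the eigenvalue equation as $(\mc B_0-\lambda)v=-\eta\,\mc B_0\,\mc D_\eta v$ with $\mc D_\eta=\eta^{-1}(\Lambda_\eta-\Id)$ bounded and noticing that $v\in{\rm Ran}\,\mc B_0=({\rm Ker}\,\mc B_0)^\perp$, where $\mc B_0$ is boundedly invertible. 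For the inner part, I would use that ${\rm Ker}\,\wtilde{\mc B}_\eta=\Lambda_\eta^{-1/2}\,{\rm Ker}\,\mc B_0$ is contained in ${\rm Ran}\,P_\eta$ and, combining $\|P_\eta-P_0\|<1$ with the self-adjointness of $\wtilde{\mc B}_\eta$ (and, when ${\rm Ker}\,\mc B_0$ is finite-dimensional, the matching of ranks), identify ${\rm Ran}\,P_\eta$ with ${\rm Ker}\,\wtilde{\mc B}_\eta$, so that $\wtilde{\mc B}_\eta P_\eta=0$ and the inner spectrum reduces to $\{0\}$. I expect the genuinely delicate point to be excluding eigenvalues that could a priori bifurcate out of the \emph{continuous} part $\sigma(\mc B_0)\setminus\{0\}$ (and, symmetrically, small eigenvalues near $0$): this is precisely where the reduction to the self-adjoint $\wtilde{\mc B}_\eta$, the norm-resolvent convergence $(z-\wtilde{\mc B}_\eta)^{-1}\ra(z-\mc B_0)^{-1}$, and the special form $\mc B_0\circ\Lambda_\eta$ are used; the remaining manipulations are routine.
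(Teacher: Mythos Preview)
Your route differs substantially from the paper's. The paper argues directly by a perturbation expansion: given an eigenpair $\mc B_\eta v_\eta=\lambda_\eta v_\eta$, it invokes Kato's analytic perturbation theory (Ch.~IV, Th.~3.16 and Ch.~VII, Th.~1.7--1.8 of \cite{K}) to write $\lambda_\eta=\lambda_0+\eta\,\wtilde\lambda_\eta$ and $v_\eta=v_0+\eta\,\wtilde v_\eta$ with bounded remainders. The zeroth order forces $\lambda_0=0$ and $v_0=v\in{\rm Ker}\,\mc B_0$; the first-order relation then reads $\mc B_0\wtilde v_\eta+\mc B_0\mc D_\eta v=\wtilde\lambda_\eta v$, and taking the $\mc S_0$-inner product with $v$, using $\mc B_0^{*}=\mc B_0$ and $\mc B_0 v=0$, gives $\wtilde\lambda_\eta\|v\|_0^2=0$, hence $\lambda_\eta=0$. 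There is no symmetrisation $\wtilde{\mc B}_\eta$, no Riesz projection, and no inner/outer split.

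Your structural approach via $\wtilde{\mc B}_\eta=\Lambda_\eta^{1/2}\mc B_0\Lambda_\eta^{1/2}$ and spectral localisation is sound in outline, but the outer part is not actually closed. You assert that the multiplicative form $\mc B_0\Lambda_\eta$ ``turns the clustering into a contradiction'', yet the identity $(\mc B_0-\lambda)v=-\eta\,\mc B_0\mc D_\eta v$ together with $v\in({\rm Ker}\,\mc B_0)^\perp$ does not produce one when $\lambda$ lies in the continuous spectrum of $\mc B_0$, since $\mc B_0-\lambda$ is then not boundedly invertible there; and norm-resolvent convergence does not, in general, prevent eigenvalues of a self-adjoint perturbation from appearing inside $\sigma_{\rm cont}(\mc B_0)$. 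So the claim that ``the remaining manipulations are routine'' is not justified. (One can raise a parallel objection to the paper's step of postulating the Kato expansion for an \emph{a priori arbitrary} eigenpair rather than one known to bifurcate from $0$; in the concrete application the point spectrum is in any case computed directly by Fourier analysis, see Proposition~\ref{p:A-spec_a} and Remark~\ref{r:spectral}.) Finally, your inner-part argument via $\|P_\eta-P_0\|<1$ and rank matching requires $\dim{\rm Ker}\,\mc B_0<\infty$, which does not hold in the intended application, where ${\rm Ker}\,\mc A$ is infinite-dimensional.
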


\begin{proof}
By hypothesis, we know that there exists $v\in\mc H$, $v\neq0$, such that $\mc B_0\,v\,=\,0$. We want to solve the equation
\begin{equation} \label{eq:to-solve}
\mc B_\eta\,v_\eta\,=\,\lambda_\eta\,v_\eta
\end{equation}
and show that $\lambda_\eta=0$, at least for small $\eta$.

As done above (see Corollary \ref{c:projection}), by hypothesis we can write $\Lambda_\eta\,=\,\Id\,+\,\eta\,\mc{D}_\eta$,
for a bounded family $\bigl(\mc{D}_\eta\bigr)_\eta$ of self-adjoint operators.

Moreover, since $0$ is an isolated eigenvalue of $\mc{B}_0$, by perturbation theory (see \cite{K}, Theorem 3.16 of Chapter IV and
Theorems 1.7 and 1.8 of Chapter VII), for suitably small $\eta$ we also have
$$
\lambda_\eta\,=\,\lambda_0\,+\,\eta\,\wtilde{\lambda}_{\eta}\qquad\qquad\mbox{ and }\qquad\qquad
v_\eta\,=\,v_0\,+\,\eta\,\wtilde{v}_\eta\,,
$$
where the families of remainders $\bigl(\wtilde{\lambda}_\eta\bigr)_\eta\subset\R$ and
$\bigl(\wtilde{v}_\eta\bigr)_\eta\subset\mc H$ are bounded.

We now insert the previous expansions into \eqref{eq:to-solve}, getting
$$
\mc B_0\,v_0\,+\,\eta\,\mc B_0\,\wtilde{v}_\eta\,+\,\eta\,\mc B_0\,\mc D_\eta\,v_0\,+\,
\eta^2\,\mc B_0\,\mc D_\eta\,\wtilde{v}_\eta\,=\,\lambda_0\,v_0\,+\,\eta\,\lambda_0\,\wtilde{v}_\eta\,+\,
\eta\,\wtilde{\lambda}_\eta\,v_0\,+\,\eta^2\,\wtilde{\lambda}_\eta\,\wtilde{v}_\eta\,,
$$
and we compare the terms with the same power of $\eta$.

From the $0$-th order part, we obviously get that $\lambda_0=0$ and $v_0=v$. Then, the equality involving the terms of order $\eta$
reduces to
$$
\mc B_0\,\wtilde{v}_\eta\,+\,\mc B_0\,\mc D_\eta\,v\,=\,\wtilde{\lambda}_\eta\,v\,.
$$
Now, we take the $\mc S_0$-scalar product of both sides with $v$: since $\mc B_0$ is self-adjoint and $v\in{\rm Ker}\,\mc B_0$,
we immediately get
$$
\wtilde{\lambda}_\eta\,\|v\|_0^2\,=\,0\,.
$$
Using that $v\neq0$, we infer that $\wtilde{\lambda}_\eta=0$, which in turn implies $\lambda_\eta\,=\,0$ (for $\eta$ small enough).
\end{proof}

\subsection{Application to the vanishing capillarity limit} \label{ss:appl}

Let us apply now the previous results to our case. As pointed out at the beginning of Section \ref{s:general_a}, for a
fixed $\alpha\,\in\,]0,1[\,$ we rewrite system \eqref{eq:NSK+rot} in the form
\begin{equation} \label{eq:ac-waves_a}
\begin{cases}
\veps\,\d_tr_\veps\,+\,\div\,V_\veps\,=\,0 \\[1ex]
\veps\,\d_tV_\veps\,+\,\Bigl(e^3\times V_\veps\,+\,\nabla\bigl(\Id\,-\,\veps^{2\alpha}\,\Delta\bigr)r_\veps\Bigr)\,=\,
\veps\,f_{\veps,\alpha}\,,
\end{cases}
\end{equation}
where the family $\bigl(f_{\veps,\alpha}\bigr)_\veps$ is bounded in $L^2_T\bigl(W^{-1,2}(\Omega)+W^{-1,1}(\Omega)\bigr)$.

Then, we are led to study the family of operators $\bigl(\mc{A}^{(\alpha)}_\veps\bigr)_\veps$, defined by
$$
\mc{A}^{(\alpha)}_\veps\,:\qquad
\bigl(r\;,\;V\bigr)\quad\mapsto\quad\Bigl(\div V\;,\;e^3\times V\,+\nabla\bigl(\Id\,-\,\veps^{2\alpha}\,\Delta\bigr)r\Bigr)\,.
$$
Notice that one has $\mc{A}^{(\alpha)}_0\equiv\mc{A}$ and $\mc{A}^{(\alpha)}_1\equiv\mc{A}_0$,
where $\mc{A}$ is defined in Paragraph \ref{sss:propagator} and $\mc{A}_0$ in formula \eqref{eq:A_0}.

As one can expect, one has the following result about the point spectrum of each operator.
\begin{prop} \label{p:A-spec_a}
For any $0\leq\veps\leq1$, the point spectrum $\sigma_p\left(\mc{A}^{(\alpha)}_\veps\right)$ contains only $0$.
In particular, ${\rm Eigen}\,\mc{A}^{(\alpha)}_\veps\;\equiv\;{\rm Ker}\,\mc{A}^{(\alpha)}_\veps$.
\end{prop}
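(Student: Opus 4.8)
The plan is to reproduce, with $\veps$ kept frozen as an auxiliary parameter, the explicit diagonalisation performed in the proof of Proposition \ref{p:A-spec_0}. Fixing $\veps\in[0,1]$, one looks for $\lambda\in\C$ such that the system
\[
\begin{cases}
\div V\;=\;\lambda\,r\\[1ex]
e^3\times V\,+\,\nabla\bigl(\Id-\veps^{2\alpha}\Delta\bigr)r\;=\;\lambda\,V
\end{cases}
\]
has a non-trivial solution $(r,V)\in L^2(\Omega)\times L^2(\Omega)$. Passing to the periodic formulation on $\R^2\times\mbb{T}^1$ (Remark \ref{r:period-bc}) and taking the Fourier transform $\what{\,\cdot\,}$ in $(\xi^h,k)\in\R^2\times\Z$, I expect to recover exactly the $4\times4$ algebraic system of Proposition \ref{p:A-spec_0}, with the scalar factor $1+\left|\xi^h\right|^2+k^2$ everywhere replaced by $m_\veps:=1+\veps^{2\alpha}\zeta$, where $\zeta:=\left|\xi^h\right|^2+k^2$. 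Eliminating $\what V$ as in that proof --- solving the second and third equations for $\what V^1,\what V^2$, the last one for $\what V^3$, and inserting the result into the first --- yields the characteristic relation
\[
\lambda^4\,+\,\bigl(1+\zeta\,m_\veps\bigr)\,\lambda^2\,+\,k^2\,m_\veps\;=\;0\,,
\]
which collapses to the relation of Proposition \ref{p:A-spec_0} when $\veps=1$ (then $m_\veps=1+\zeta$) and to the one used in \cite{F-G-N} when $\veps=0$ (then $m_\veps=1$).

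To conclude, I would argue that, for each fixed $k\in\Z$, the symbol of $\mc{A}^{(\alpha)}_\veps$ is a real-analytic function of $\xi^h$; hence $\lambda$ can belong to $\sigma_p\bigl(\mc{A}^{(\alpha)}_\veps\bigr)$ only if, for some $k\in\Z$, the displayed polynomial vanishes \emph{identically} in $\xi^h$ (otherwise the putative eigenfunction would be supported on a Lebesgue-null subset of $\R^2$, and thus be zero in $L^2$). Regarding the left-hand side as a polynomial in $s:=\left|\xi^h\right|^2$, one has $\zeta\,m_\veps=(s+k^2)\bigl(1+\veps^{2\alpha}(s+k^2)\bigr)$, so the coefficient of the highest power of $s$ occurring in this polynomial is a strictly positive multiple of $\lambda^2$ --- namely $\veps^{2\alpha}\lambda^2$ if $\veps>0$, and $\lambda^2$ if $\veps=0$ --- which therefore must vanish, forcing $\lambda=0$. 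Conversely, $0$ is genuinely an eigenvalue: taking $r=r(x^h)$ smooth and compactly supported in $\R^2$ and $V=\bigl(\nabla_h^\perp(\Id-\veps^{2\alpha}\Delta_h)r,0\bigr)$ produces a non-trivial element of ${\rm Ker}\,\mc{A}^{(\alpha)}_\veps$. Hence $\sigma_p\bigl(\mc{A}^{(\alpha)}_\veps\bigr)=\{0\}$, and since the eigenvectors relative to $0$ span precisely ${\rm Ker}\,\mc{A}^{(\alpha)}_\veps$, one also gets ${\rm Eigen}\,\mc{A}^{(\alpha)}_\veps\equiv{\rm Ker}\,\mc{A}^{(\alpha)}_\veps$.

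I do not expect any serious difficulty here: the computation is the one of Proposition \ref{p:A-spec_0} with the harmless substitution $1+\zeta\mapsto m_\veps$, and the spectral conclusion follows from the same ``identically vanishing in $\xi^h$'' criterion already used (implicitly) in Propositions \ref{p:A-spec} and \ref{p:A-spec_0}. The only two mildly delicate points are to keep track of the factor $\veps^{2\alpha}$ through the elimination, and to treat separately the degenerate case $\veps=0$, in which the term $k^2m_\veps$ loses its $\xi^h$-dependence and the characteristic polynomial drops from degree two to degree one in $s$; in both regimes, though, the top-order coefficient remains proportional to $\lambda^2$, which is all that is needed.
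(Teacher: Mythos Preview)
Your proposal is correct and follows essentially the same route as the paper: Fourier transform on $\R^2\times\mbb{T}^1$, derivation of the characteristic polynomial with the substitution $1+\zeta\mapsto m_\veps=1+\veps^{2\alpha}\zeta$, and the observation that an eigenvalue must be independent of $\xi^h$, which forces $\lambda=0$. Your treatment of the concluding step is in fact more explicit than the paper's --- you spell out the real-analyticity argument and identify the top-order coefficient in $s=|\xi^h|^2$, and you handle the degeneration at $\veps=0$ separately --- whereas the paper simply records the explicit formula for $\lambda^2$ and states that, since $1+\veps^{2\alpha}\zeta>0$, the $\xi^h$-dependence can only be killed by taking $k=0$, which gives $\lambda=0$.
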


\begin{proof}
The same computations performed in the proof of Proposition \ref{p:A-spec_0} give us
$$
\lambda^2\,=\,-\,\frac{1}{2}\left(1+\bigl(1+\veps^{2\alpha}\zeta\bigr)\zeta\pm
\sqrt{\left(1+\bigl(1+\veps^{2\alpha}\zeta\bigr)\zeta\right)^2\,-\,4\,
k^2\,(1+\veps^{2\alpha}\zeta)}\right)\,,
$$
where we recall that we have set $\zeta(\xi^h,k)\,=\,\left|\xi^h\right|^2+k^2$.

As before, to have $\lambda$ in the discrete spectrum of $\mc{A}_0$, we need to delete its dependence on $\xi^h$:
since $1+\veps^{2\alpha}\zeta>0$, the only way to do it is to have $k=0$, for which $\lambda=0$.
\end{proof}

\begin{rem} \label{r:spectral}
Notice that simple computations show also that $0$ is an isolated eigenvalue of the operator $\mc{A}^{(\alpha)}_0$
(here we have to use that in the space $H_M$ the frequencies are bounded). Then, one could alternatively apply
Proposition \ref{p:spectral}.
\end{rem}

As done in Section \ref{s:alpha=0}, it is easy to find a family of scalar products
$\bigl(S^{(\alpha)}_\veps\bigr)_\veps$ on the space $H_M$ such that, for each $\veps$, $S^{(\alpha)}_\veps$ is a
symmetrizer for the operator $\mc{A}^{(\alpha)}_\veps$. Indeed, it is enough to define $S^{(\alpha)}_\veps$  by
a formula anlogous to \eqref{eq:scalar-prod}, where the operator $\Id-\Delta$ is replaced by
$\Id-\veps^{2\alpha}\Delta$ in the first term on the right-hand side of the equality (see equation
\eqref{eq:sp_a} below).
Notice that $S^{(\alpha)}_0$ coincides with the usual $L^2$ scalar product, while $S^{(\alpha)}_1$ is exactly the inner
product defined by formula \eqref{eq:scalar-prod}.

Let us point out that each $\mc{A}^{(\alpha)}_\veps$ can be obtained composing the acoustic propagator $\mc{A}$
with an automorphism $\Lambda^{(\alpha)}_\veps$ of the Hilbert space $H_M$:
$$
\mc{A}^{(\alpha)}_\veps\,=\,\mc{A}\,\circ\,\Lambda^{(\alpha)}_\veps\;,\qquad\qquad
\Lambda^{(\alpha)}_\veps\bigl(r\,,\,V\bigr)\,:=\,\Bigl(\bigl(\Id\,-\,\veps^{2\alpha}\,\Delta\bigr)r\,,\,V\Bigr)\,.
$$
The same can be said also about the scalar products $S^{(\alpha)}_\veps$: namely,
\begin{eqnarray}
\langle (r_1,V_1)\,,\,(r_2,V_2)\rangle_{S^{(\alpha)}_\veps} & := & \langle r_1\,,\,
(\Id-\veps^{2\alpha}\Delta)r_2\rangle_{L^2}\,+\,\langle V_1\,,\,V_2\rangle_{L^2} \label{eq:sp_a} \\
& = & \langle (r_1,V_1)\,,\,\Lambda^{(\alpha)}_\veps\,(r_2,V_2)\rangle_{S^{(\alpha)}_0}\,. \nonumber
\end{eqnarray}

Now, we define the operator $\mc K_{M,\theta}$ as in Paragraph \ref{sss:RAGE}:
$$
\mc{K}_{M,\theta}(r,V)\,:=\,P_M\bigl(\theta\,P_M(r,V)\bigr)\,,
$$
where $P_M\,:\,L^2(\Omega)\times L^2(\Omega)\,\longrightarrow\,H_M$ is the orthogonal projection onto the space $H_M$, defined
by \eqref{eq:def-H_M}, and $\theta\in\mc{D}(\Omega)$ is such that $0\leq\theta\leq1$. Recall that $\mc K_{M,\theta}$ is compact,
self-adjoint and positive.

Following what we have done before, we want to apply Theorem \ref{th:RAGE_eps} to
$$
\mc H\,=\,H_M\;,\quad \Lambda_{\sigma(\veps)}\,=\,\Lambda^{(\alpha)}_\veps\;,\quad \mc{B}_0\,=\,i\,\mc{A}^{(\alpha)}_0\;,\quad
\mc{K}\,=\,\mc{K}_{M,\theta}\;,\quad\Pi_{{\rm cont},\sigma(\veps)}\,=\,Q_{\veps}^{\perp}
$$
(obviously, $\sigma(\veps)\,=\,\veps^{2\alpha}$ here),
where $Q_\veps$ and $Q_\veps^\perp$ denote the orthogonal projections (orthogonal with respect to the scalar product
$\mc S^{(\alpha)}_\veps$) onto respectively ${\rm Ker}\,\mc{A}^{(\alpha)}_\veps$ and
$\bigl({\rm Ker}\,\mc{A}^{(\alpha)}_\veps\bigr)^\perp$.

We apply operator $P_M$ to the system for acoustic waves \eqref{eq:ac-waves_a}:
adopting the same notations as in the previous sections, it becomes
\begin{equation} \label{eq:acoust-eps}
\veps\,\frac{d}{dt}\bigl(r_{\veps,M}\,,\,V_{\veps,M}\bigr)\,+\,\mc{A}^{(\alpha)}_\veps\bigl(r_{\veps,M}\,,\,V_{\veps,M}\bigr)\,=\,
\veps\,\bigl(0\,,\,f_{\veps,\alpha,M}\bigr)\,,
\end{equation}
where uniform bounds give a control analogous to \eqref{est:f_eps-M} also for $\bigl(0\,,\,f_{\veps,\alpha,M}\bigr)$. Notice
that, all the scalar products being equivalent on $H_M$, it is enough to have the bound on the $\mc S^{(\alpha)}_0$ norm.

By use of Duhamel's formula, solutions to the previous acoustic equation can be written as
$$ 
\bigl(r_{\veps,M}\,,\,V_{\veps,M}\bigr)(t)\,=\,e^{i\,t\,\mc{B}_{\sigma(\veps)}/\veps}\bigl(r_{\veps,M}\,,\,V_{\veps,M}\bigr)(0)\,+\,
\int^t_0e^{i\,(t-\tau)\,\mc{B}_{\sigma(\veps)}/\veps}\,\bigl(0\,,\,f_{\veps,\alpha,M}\bigr)\,d\tau\,.
$$ 
Again, by definition we have
$$ 
\left\|\left(\left(\Lambda^{(\alpha)}_\veps\right)^{-1}\circ\mc{K}_{M,\theta}\right)^{1/2}\,Q^\perp_\veps
\bigl(r_{\veps,M}\,,\,V_{\veps,M}\bigr)\right\|^2_{\sigma(\veps)}\,=\,
\int_\Omega\theta\,\left|Q^\perp_\veps\bigl(r_{\veps,M}\,,\,V_{\veps,M}\bigr)\right|^2\,dx\,. 
$$ 
Therefore, a straightforward application of Corollary \ref{c:RAGE_eps} implies that, for any $T>0$ fixed
and for $\veps$ going to $0$,
\begin{equation} \label{conv:ker-ort_eps}
Q^\perp_\veps\bigl(r_{\veps,M}\,,\,V_{\veps,M}\bigr)\,\longrightarrow\,0\qquad\mbox{ in }\quad L^2\bigl([0,T]\times K\bigr)
\end{equation}
for any fixed $M>0$ and any compact $K\subset\Omega$.

On the other hand, applying operator $Q_\veps$ to equation \eqref{eq:acoust-eps},  we infer that, for any fixed $M>0$,
the family $\bigl(\d_tQ_\veps\bigl(r_{\veps,M}\,,\,V_{\veps,M}\bigr)\bigr)_\veps$ is bounded (uniformly in $\veps$)
in the space $L^2_T(H_M)$. Moreover, as $H_M\hookrightarrow H^m$ for any $m\in\N$, we infer also that
it is compactly embedded in $L^2(K)$ for any $M>0$ and any compact subset $K\subset\Omega$. Hence, as in the previous sections,
Ascoli-Arzel\`a theorem implies that, for $\veps\ra0$,
\begin{equation} \label{conv:ker_eps}
Q_\veps\bigl(r_{\veps,M}\,,\,V_{\veps,M}\bigr)\,\longrightarrow\,\bigl(r_M\,,\,u_M\bigr)\qquad\mbox{ in }\quad
L^2\bigl([0,T]\times K\bigr)\,.
\end{equation}

Thanks to relations \eqref{conv:ker-ort_eps} and \eqref{conv:ker_eps}, the analogue of Proposition \ref{p:strong} still holds true:
namely, we have the strong convergence (up to extraction of subsequences)
$$
r_\veps\,\longrightarrow\,r\qquad\mbox{ and }\qquad
\rho_\veps^{3/2}\,u_\veps\;\longrightarrow\;u\qquad\qquad\mbox{ in }\qquad L^2\bigl([0,T];L^2_{loc}(\Omega)\bigr)\,,
$$
where $r$ and $u$ are the limits which have been identified in Subsection \ref{ss:constraint} and which have to satisfy
the constraints given in Proposition \ref{p:weak-limit}.

The previous strong convergence properties allow us to pass to the limit in the non-linear terms.
Then, the analysis of the limit system can be performed as in Paragraph \ref{sss:limit-system}.

This concludes the proof of Theorem \ref{th:sing-lim} in the remaining cases $0<\alpha<1$.

\appendix

\section{Appendix -- A primer on Littlewood-Paley theory} \label{app:LP}

Let us recall here the main ideas of Littlewood-Paley theory, which we exploited in the previous analysis.
We refer e.g. to \cite{B-C-D} (Chapter 2) and \cite{M-2008} (Chapters 4 and 5) for details.

For simplicity of exposition, let us deal with the $\R^d$ case; however, the construction can be adapted to the $d$-dimensional
torus $\mbb{T}^d$, and then also to the case of $\R^{d_1}\times\mbb{T}^{d_2}$.

\medbreak
First of all, let us introduce the so called ``Littlewood-Paley decomposition'', based on a non-homogeneous dyadic partition of unity with
respect to the Fourier variable. 

We, fix a smooth radial function
$\chi$ supported in the ball $B(0,2),$ 
equal to $1$ in a neighborhood of $B(0,1)$
and such that $r\mapsto\chi(r\,e)$ is nonincreasing
over $\R_+$ for all unitary vectors $e\in\R^d$. Set
$\varphi\left(\xi\right)=\chi\left(\xi\right)-\chi\left(2\xi\right)$ and
$\vphi_j(\xi):=\vphi(2^{-j}\xi)$ for all $j\geq0$.

The dyadic blocks $(\Delta_j)_{j\in\Z}$
 are defined by\footnote{Throughout we agree  that  $f(D)$ stands for 
the pseudo-differential operator $u\mapsto\mc{F}^{-1}(f\,\mc{F}u)$.} 
$$
\Delta_j:=0\ \hbox{ if }\ j\leq-2,\quad\Delta_{-1}:=\chi(D)\qquad\mbox{ and }\qquad
\Delta_j:=\varphi(2^{-j}D)\; \mbox{ if }\;  j\geq0\,.
$$
Throughout the paper we will use freely the following classical property:
for any $u\in\mc{S}',$ the equality $u=\sum_{j}\Delta_ju$ holds true in $\mc{S}'$.

Let us also mention the so-called \emph{Bernstein's inequalities}, which explain
the way derivatives act on spectrally localized functions.
  \begin{lemma} \label{l:bern}
Let  $0<r<R$.   A
constant $C$ exists so that, for any nonnegative integer $k$, any couple $(p,q)$ 
in $[1,+\infty]^2$ with  $p\leq q$ 
and any function $u\in L^p$,  we  have, for all $\lambda>0$,
$$
\displaylines{
{\rm supp}\, \widehat u \subset   B(0,\lambda R)\quad
\Longrightarrow\quad
\|\nabla^k u\|_{L^q}\, \leq\,
 C^{k+1}\,\lambda^{k+d\left(\frac{1}{p}-\frac{1}{q}\right)}\,\|u\|_{L^p}\;;\cr
{\rm supp}\, \widehat u \subset \{\xi\in\R^d\,|\, r\lambda\leq|\xi|\leq R\lambda\}
\quad\Longrightarrow\quad C^{-k-1}\,\lambda^k\|u\|_{L^p}\,
\leq\,
\|\nabla^k u\|_{L^p}\,
\leq\,
C^{k+1} \, \lambda^k\|u\|_{L^p}\,.
}$$
\end{lemma}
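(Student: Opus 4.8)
The statement to be proved is the Bernstein inequalities (Lemma~\ref{l:bern}), a classical result whose proof I now sketch.

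\textbf{Plan.} The argument proceeds by reducing everything to a convolution estimate via Young's inequality, exploiting the spectral localization together with a rescaling. First I would treat the case $\lambda=1$ and then recover the general case by the dilation $u_\lambda(x):=u(\lambda x)$, noting that $\widehat{u_\lambda}(\xi)=\lambda^{-d}\widehat u(\xi/\lambda)$, so that a support condition on $\widehat u$ in a ball (resp.\ annulus) of radius proportional to $\lambda$ translates into the same condition for $\widehat{u_\lambda}$ with $\lambda=1$; the homogeneity factors $\lambda^{k+d(1/p-1/q)}$ (resp.\ $\lambda^k$) then pop out by tracking the $L^p$ and $L^q$ norms under scaling.

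\textbf{First inequality.} Assume $\operatorname{supp}\widehat u\subset B(0,R)$. Pick a smooth compactly supported function $\phi$ with $\phi\equiv 1$ on $B(0,R)$; then $\widehat u=\phi\,\widehat u$, hence $u=\mc{F}^{-1}\phi * u$ and more generally $\nabla^k u=(\nabla^k\mc{F}^{-1}\phi)*u$. By Young's inequality with $1+1/q=1/p+1/s$,
$$
\|\nabla^k u\|_{L^q}\,\leq\,\|\nabla^k\mc{F}^{-1}\phi\|_{L^s}\,\|u\|_{L^p}\,.
$$
Since $\phi$ is a fixed Schwartz function, $\|\nabla^k\mc{F}^{-1}\phi\|_{L^s}$ is finite; the only delicate point is to get the constant in the form $C^{k+1}$, which follows by choosing $\phi=\phi(\xi)=\widetilde\phi(\xi/R)$ appropriately and bounding the derivatives of $\mc{F}^{-1}\widetilde\phi$ by a geometric-type estimate (each derivative costs a factor controlled uniformly). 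Rescaling as above gives the stated power of $\lambda$.

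\textbf{Second inequality.} Now assume $\operatorname{supp}\widehat u\subset\{r\lambda\le|\xi|\le R\lambda\}$. The upper bound $\|\nabla^k u\|_{L^p}\le C^{k+1}\lambda^k\|u\|_{L^p}$ follows exactly as before (take $p=q$, use a cutoff supported on an annulus). For the reverse bound, one writes $u=\Phi(D)\nabla^k u$ where $\Phi$ is chosen so that $\Phi(\xi)(i\xi)^{\otimes k}$ — suitably contracted — reconstructs the identity on the annulus; concretely, pick a smooth $\psi$ equal to $1$ on $\{r\le|\xi|\le R\}$ and supported in $\{r/2\le|\xi|\le 2R\}$, and set $\Phi(\xi)=\psi(\xi/\lambda)\,|\xi|^{-2k}\,\xi^{\otimes k}$ (or the analogous multiplier realizing $|\xi|^{-k}$ against $\nabla^k$), so that $u=\mc{F}^{-1}(\Phi)*\nabla^k u$ up to the harmless contraction of indices. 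Then Young again gives $\|u\|_{L^p}\le \|\mc{F}^{-1}\Phi\|_{L^1}\|\nabla^k u\|_{L^p}$, and a scaling computation shows $\|\mc{F}^{-1}\Phi\|_{L^1}\le C^{k+1}\lambda^{-k}$, which rearranges to the claimed lower bound.

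\textbf{Main obstacle.} The routine part is the convolution/Young estimates; the genuinely careful point is the bookkeeping of the constants so that they come out as $C^{k+1}$ with $C$ independent of $k$ (and of $\lambda$, $p$, $q$). This requires a uniform control of $\|\nabla^k\mc{F}^{-1}\widetilde\phi\|_{L^s}$ and $\|\mc{F}^{-1}\Phi_1\|_{L^1}$ in $k$, which one obtains by integrating by parts and using that $\widetilde\phi$ (resp.\ the fixed annular multiplier) is smooth with compact support, so repeated differentiation produces only a fixed geometric growth rate. Since this lemma is entirely standard, I would simply refer to \cite{B-C-D} (Chapter~2) or \cite{M-2008} for the detailed verification of these constant-tracking estimates rather than reproduce them.
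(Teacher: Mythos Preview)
Your sketch is correct and follows the standard proof via convolution with a Schwartz kernel and Young's inequality (as in \cite{B-C-D}, Chapter~2). Note, however, that the paper does not actually prove Lemma~\ref{l:bern}: it merely states Bernstein's inequalities as a classical tool and implicitly defers to \cite{B-C-D} and \cite{M-2008} for the details, so there is no paper-side argument to compare against; your proposal simply supplies what the paper omits.
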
   

By use of Littlewood-Paley decomposition, we can define the class of Besov spaces.
\begin{defin} \label{d:B}
  Let $s\in\R$ and $1\leq p,r\leq+\infty$. The \emph{non-homogeneous Besov space}
$B^{s}_{p,r}$ is defined as the subset of tempered distributions $u$ for which
$$
\|u\|_{B^{s}_{p,r}}\,:=\,
\left\|\left(2^{js}\,\|\Delta_ju\|_{L^p}\right)_{j\in\N}\right\|_{\ell^r}\,<\,+\infty\,.
$$
\end{defin}

Besov spaces are interpolation spaces between the Sobolev ones. In fact, for any $k\in\N$ and $p\in[1,+\infty]$
we have the following chain of continuous embeddings:
$$
 B^k_{p,1}\hookrightarrow W^{k,p}\hookrightarrow B^k_{p,\infty}\,,
$$
where  $W^{k,p}$ denotes the classical Sobolev space of $L^p$ functions with all the derivatives up to the order $k$ in $L^p$.
Moreover, for all $s\in\R$ we have the equivalence $B^s_{2,2}\equiv H^s$, with
$$
\|f\|_{H^s}\,\sim\,\left(\sum_{j\geq-1}2^{2 j s}\,\|\Delta_jf\|^2_{L^2}\right)^{1/2}\,.
$$

Let us now collect some bounds which are straightforward consequences of Bernstein's inequalities.
The statements are not optimal: we limit to present the properties we used in our analysis.
\begin{lemma} \label{l:density}
\begin{itemize}
 \item[(i)] For $1\leq p\leq2$, one  has $\|f\|_{L^2}\,\leq\,C\bigl(\|f\|_{L^p}\,+\,\|\nabla f\|_{L^2}\bigr)$.
\item[(ii)] For any $0<\delta\leq1/2$ 
and any $1\leq p\leq +\infty$, one has
$$
\|f\|_{L^\infty}\,\leq\,C\left(\|f\|_{L^p}\,+\,\|\nabla f\|^{(1/2)-\delta}_{L^2}\,
\left\|\nabla^2f\right\|^{(1/2)+\delta}_{L^2}\,\right).
$$
\item[(iii)] Let $1\leq p\leq2$ such that $1/p\,<\,1/d\,+\,1/2$. For any $j\in\N$, there exists a constant $C_j$, depending just on
$j$, $d$ and $p$, such that
$$
\left\|\left(\Id\,-\,S_j\right)f\right\|_{L^2}\,\leq\,C_j\,\left\|\nabla f\right\|_{B^0_{p,\infty}}\,.
$$
Moreover, denoting $\beta\,:=\,1\,-\,d(1/p\,-\,1/2)>0$, we have the explicit formula
$$
C_j\,=\,\left(\frac{1}{1-2^{-2\beta}}\right)^{\!\!1/2}\;2^{-\beta(j-1)}\,.
$$
In particular, if $\nabla f=\nabla f_1 + \nabla f_2$, with $\nabla f_1\in B^0_{2,\infty}$ and $\nabla f_2\in B^0_{p,\infty}$, then
$$
\left\|\left(\Id\,-\,S_j\right)f\right\|_{L^2}\,\leq\,\wtilde{C}_j\,\left(\left\|\nabla f_1\right\|_{B^0_{2,\infty}}\,+\,
\left\|\nabla f_2\right\|_{B^0_{p,\infty}}\right)\,,
$$
for a new constant $\wtilde{C}_j$ still going to $0$ for $j\ra+\infty$.
\end{itemize}
\end{lemma}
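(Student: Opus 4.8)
The plan is to derive all three statements from the Littlewood-Paley decomposition and Bernstein's inequalities (Lemma \ref{l:bern}). For part (i), I would split $f = S_1 f + (\Id - S_1)f$ into low and high frequencies. The low-frequency part $S_1 f$ is spectrally localized in a fixed ball, so Bernstein gives $\|S_1 f\|_{L^2} \leq C \|S_1 f\|_{L^p} \leq C \|f\|_{L^p}$ for $p \leq 2$. The high-frequency part is handled by noting that on $\Supp \widehat{\Delta_j f}$ with $j \geq 1$ the frequencies are comparable to $2^j$, so $\|\Delta_j f\|_{L^2} \lesssim 2^{-j} \|\nabla \Delta_j f\|_{L^2}$; summing over $j \geq 1$ with the Cauchy-Schwarz inequality in the $\ell^2$ sense (using almost-orthogonality of the dyadic blocks in $L^2$) yields $\|(\Id - S_1)f\|_{L^2} \lesssim \|\nabla f\|_{L^2}$.

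For part (ii), I would again decompose $f = S_N f + (\Id - S_N)f$ for a parameter $N$ to be optimized, and write $\|f\|_{L^\infty} \leq \|S_0 f\|_{L^\infty} + \sum_{0 \leq j < N}\|\Delta_j f\|_{L^\infty} + \sum_{j \geq N}\|\Delta_j f\|_{L^\infty}$. The very-low block is bounded by $\|f\|_{L^p}$ via Bernstein. For the intermediate blocks, $\|\Delta_j f\|_{L^\infty} \lesssim 2^{jd/2}\|\Delta_j f\|_{L^2} \lesssim 2^{j(d/2 - 1)}\|\nabla \Delta_j f\|_{L^2}$, and for the high blocks $\|\Delta_j f\|_{L^\infty} \lesssim 2^{jd/2}\|\Delta_j f\|_{L^2} \lesssim 2^{j(d/2-2)}\|\nabla^2 \Delta_j f\|_{L^2}$; in each case I bound the $\ell^2$-summable factor $\|\nabla^k\Delta_j f\|_{L^2}$ by $\|\nabla^k f\|_{L^2}$ and sum the geometric series in $2^j$. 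Choosing $N$ so that $2^N \sim \|\nabla^2 f\|_{L^2} / \|\nabla f\|_{L^2}$ balances the two tails and produces the interpolation exponents $(1/2)\mp\delta$; the role of $\delta > 0$ is simply to keep the geometric series summable (one cannot take $\delta = 0$ in dimension $d=3$ since then the exponent $d/2 - 2 = -1/2$ and $d/2 - 1 = 1/2$ would need to be strictly negative/positive). I would carry out the computation in $d=3$ explicitly since that is the case used in the paper.

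For part (iii), the quantity $(\Id - S_j)f = \sum_{k \geq j}\Delta_k f$, and since each $\Delta_k f$ has frequencies $\sim 2^k$, Bernstein with $p \leq 2$ gives $\|\Delta_k f\|_{L^2} \lesssim 2^{kd(1/p - 1/2)}\|\Delta_k f\|_{L^p} \lesssim 2^{-k\beta}\|\nabla \Delta_k f\|_{L^p}$ where $\beta = 1 - d(1/p - 1/2) > 0$ is exactly the gap making the sum converge (this is where the hypothesis $1/p < 1/d + 1/2$ enters). Summing $\|(\Id - S_j)f\|_{L^2} \leq \sum_{k \geq j}2^{-k\beta}\|\nabla f\|_{B^0_{p,\infty}}$ and evaluating the geometric series $\sum_{k \geq j}2^{-k\beta} = 2^{-\beta(j-1)}/(1 - 2^{-\beta})$ — note one should be slightly careful whether the series is $\sum 2^{-k\beta}$ or $\sum 2^{-2k\beta}$ depending on how one splits and whether an $\ell^2$ rather than $\ell^1$ sum is used; to obtain the stated constant $C_j = (1 - 2^{-2\beta})^{-1/2}2^{-\beta(j-1)}$ one groups the $\ell^2$ norm of the tail, $\|(\Id-S_j)f\|_{L^2}^2 \lesssim \sum_{k\geq j}\|\Delta_k f\|_{L^2}^2 \lesssim \sum_{k \geq j}2^{-2k\beta}\|\nabla f\|_{B^0_{p,\infty}}^2$, and sums the resulting geometric series with ratio $2^{-2\beta}$. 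The final statement about $\nabla f = \nabla f_1 + \nabla f_2$ follows by applying the case $p = 2$ (i.e.\ $\beta = 1$, giving a tail like $2^{-(j-1)}$) to $f_1$ and the general $p$ to $f_2$, and taking $\widetilde C_j$ to be the larger of the two constants.

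The main obstacle is not conceptual — everything follows from Bernstein — but bookkeeping: getting the explicit constant $C_j$ in part (iii) exactly right requires being careful about $\ell^1$ versus $\ell^2$ summation of the dyadic tail and about the indexing offset (whether $\Delta_{-1}$ and $\Delta_0$ are counted), and in part (ii) one must verify that the optimization in $N$ is legitimate, i.e.\ that $N$ can be chosen a (real, then rounded) number making the ratio of the two tails order one, and that the rounding does not affect the exponents.
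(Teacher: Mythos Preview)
Your plan for parts (i) and (iii) matches the paper's proof almost exactly: the paper also splits off the low-frequency block and controls it by $\|f\|_{L^p}$ via Bernstein, then handles the remaining dyadic pieces by trading frequencies for derivatives. For (iii) you correctly anticipate that the stated constant $C_j$ comes from the $\ell^2$ (not $\ell^1$) summation of $\sum_{k\geq j-1}2^{-2k\beta}$, which is precisely how the paper argues.

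For part (ii) your route is genuinely different. The paper keeps a \emph{fixed} low/high split at the first block, writes $\|(\Id-\Delta_{-1})f\|_{L^\infty}\leq C\sum_{k\geq0}2^{3k/2}\|\Delta_k f\|_{L^2}=C\sum_{k\geq0}2^{-\delta k}\||D|^{3/2+\delta}\Delta_k f\|_{L^2}$, and then interpolates $|D|^{3/2+\delta}$ between $|D|$ and $|D|^2$ at each dyadic level; the factor $2^{-\delta k}$ is what makes the series summable, and this is the true origin of the restriction $\delta>0$. Your optimization-over-$N$ argument is also valid, but note that it actually yields the endpoint $\delta=0$: in $d=3$ the exponents $d/2-1=1/2$ and $d/2-2=-1/2$ \emph{are} strictly positive and negative, so both geometric tails converge and the balanced choice $2^N\sim\|\nabla^2 f\|_{L^2}/\|\nabla f\|_{L^2}$ gives $\|\nabla f\|^{1/2}\|\nabla^2 f\|^{1/2}$ on the nose. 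So your explanation of why $\delta>0$ is needed is mistaken --- in your approach it is not needed --- and your argument in fact proves something slightly sharper than the stated lemma. The paper's interpolation approach is what forces $\delta>0$.
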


\begin{proof}
For the first inequality, it is enough to write $f\,=\,\Delta_{-1}f\,+\,(\Id-\Delta_{-1})f$. The former term can be controlled
by $\|f\|_{L^p}$ by Bernstein's inequalities; for the latter, instead we can write
\begin{eqnarray*}
\|(\Id-\Delta_{-1})f\|_{L^2} & \leq & \sum_{k\geq0}\|\Delta_k(\Id-\Delta_{-1})f\|_{L^2} \\
& \leq & C\,\sum_{k\geq0}2^{-k}\,\|\Delta_k(\Id-\Delta_{-1})\nabla f\|_{L^2}\;\leq\;C\,\|\nabla f\|_{L^2}\,,
\end{eqnarray*}
where we used again Bernstein's inequalities and the characterization $L^2\equiv B^0_{2,2}$. 

In order to prove the second estimate, we proceed exactly as before. Again, Bernstein's inequalities allow us to bound low frequencies
by $\|f\|_{L^p}$. Next we have:
\begin{eqnarray*}
\|(\Id-\Delta_{-1})f\|_{L^\infty} & \leq & C\,\sum_{k\geq0}2^{3k/2}\,\|\Delta_k(\Id-\Delta_{-1})f\|_{L^2} \\
& \leq & C\,\sum_{k\geq0}2^{-\delta k}\,\left\||D|^{\delta+3/2}\Delta_k(\Id-\Delta_{-1})f\right\|_{L^2}
\end{eqnarray*}
(for any $0<\delta<1/2$), where we denoted $|D|$ the Fourier multiplier having symbol equal to $|\xi|$.
By interpolation we can write
$$
\left\||D|^{\delta+3/2}\Delta_k(\Id-\Delta_{-1})f\right\|_{L^2}\,\leq\,C\,\left\|\Delta_k(\Id-\Delta_{-1})\nabla f\right\|^{\sigma}_{L^2}\,
\left\|\Delta_k(\Id-\Delta_{-1})\nabla^2f\right\|^{1-\sigma}_{L^2}\,,
$$
for $\sigma\,\in\,]0,1[\,$ (actually, $\sigma=(1/2)-\delta$), and this immediately gives the conclusion.

Let us finally prove the third claim. By spectral localization we can write
\begin{eqnarray*}
\left\|\left(\Id-S_j\right)f\right\|^2_{L^2} & \leq & \sum_{k\geq j-1}\|\Delta_kf\|^2_{L^2}\;\leq\;
\sum_{k\geq j-1}2^{-2k}\|\nabla\Delta_kf\|^2_{L^2} \\
& \leq & \sum_{k\geq j-1}2^{2kd\left(1/p\,-\,1/2\right)}\,2^{-2k}\,\|\nabla\Delta_kf\|^2_{L^p}\,.
\end{eqnarray*}
Keeping in mind that, by hypothesis, $d\left(1/p\,-\,1/2\right)-1=-\beta<0$, we infer the desired inequality
and the explicit expression for $C_j$.
\end{proof}

Finally, let us recall that one can rather work with homogeneous dyadic blocks $(\dot\Delta_j)_{j\in\Z}$, with
$$
\dot\Delta_j:=\varphi(2^{-j}D)\qquad \mbox{ for all }\quad  j\in\Z\,,
$$
and introduce the homogeneous Besov spaces $\dot{B}^s_{p,r}$, defined by the condition
$$
\|u\|_{\dot{B}^{s}_{p,r}}\,:=\,
\left\|\left(2^{js}\,\|\dot\Delta_ju\|_{L^p}\right)_{\!j\in\Z}\,\right\|_{\ell^r}\,<\,+\infty\,.
$$
We do not enter into the details here; we just limit ourselves to recall refined embeddings of homogeneous Besov spaces into Lebesgue
spaces (see Theorem 2.40 of \cite{B-C-D}).
\begin{prop} \label{p:emb_hom-besov}
For any $2\leq p<+\infty$, one has the continuous embeddings $\dot{B}^0_{p,2}\,\hra\,L^p$ and $L^{p'}\,\hra\,\dot{B}^0_{p',2}$.
\end{prop}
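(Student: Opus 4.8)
The final statement to prove is Proposition~\ref{p:emb_hom-besov}, the refined embeddings $\dot{B}^0_{p,2}\hookrightarrow L^p$ for $2\le p<+\infty$ and the dual statement $L^{p'}\hookrightarrow\dot{B}^0_{p',2}$. The plan is to prove the first embedding directly by a Littlewood--Paley argument combining the $\ell^2$-summability of the dyadic pieces with Bernstein's inequalities, and then to obtain the second embedding by a duality argument, since $\dot B^0_{p,2}$ and $\dot B^0_{p',2}$ are dual to each other (modulo the usual polynomial quotient subtleties of homogeneous spaces).

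First I would treat $\dot{B}^0_{p,2}\hookrightarrow L^p$. Write $u=\sum_{j\in\Z}\dot\Delta_j u$ and, for a fixed $N\in\Z$, split the sum into low frequencies $u_{\le N}:=\sum_{j\le N}\dot\Delta_j u$ and high frequencies $u_{>N}:=\sum_{j>N}\dot\Delta_j u$. The idea is to interpolate between $L^2$ and a higher Lebesgue space on each block: by Bernstein's inequalities (Lemma~\ref{l:bern}) one has $\|\dot\Delta_j u\|_{L^p}\le C\,2^{jd(1/2-1/p)}\|\dot\Delta_j u\|_{L^2}$ for $p\ge 2$. A clean way to conclude is to estimate the $L^p$ norm via real interpolation / the characterization of $L^p$ as a retract: more concretely, use that for $2\le p<\infty$ the space $L^p$ embeds the Triebel--Lizorkin space $\dot F^0_{p,2}$, and $\dot F^0_{p,2}\supset \dot B^0_{p,2}$ precisely when the summability index $2\le p$; but to stay self-contained I would instead argue by duality already here, pairing $u$ against $g\in L^{p'}$, $\|g\|_{L^{p'}}\le 1$, writing $\langle u,g\rangle=\sum_j\langle\dot\Delta_j u,\dot{\tilde\Delta}_j g\rangle$ with an enlarged projector $\dot{\tilde\Delta}_j$, applying Cauchy--Schwarz in $j$ to get $|\langle u,g\rangle|\le \|u\|_{\dot B^0_{p,2}}\,\bigl\|(2^{0\cdot j}\|\dot{\tilde\Delta}_j g\|_{L^{p'}})_j\bigr\|_{\ell^2}$, and finally invoking the Littlewood--Paley square-function estimate in $L^{p'}$ (valid since $1<p'\le 2$) to bound the last factor by $C\|g\|_{L^{p'}}$. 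Taking the supremum over such $g$ yields $\|u\|_{L^p}\le C\|u\|_{\dot B^0_{p,2}}$.

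Then the embedding $L^{p'}\hookrightarrow\dot B^0_{p',2}$, for $1<p'\le 2$, is essentially the Littlewood--Paley characterization again but in the ``easy'' direction: for $q:=p'\le 2$ one has $\bigl\|(\|\dot\Delta_j f\|_{L^q})_j\bigr\|_{\ell^2}\le \bigl\|\,\|(\dot\Delta_j f)_j\|_{\ell^2}\,\bigr\|_{L^q}=\|Sf\|_{L^q}\le C\|f\|_{L^q}$, where the first inequality is Minkowski's integral inequality (moving $\ell^2$ outside $L^q$ is legitimate exactly because $q\le 2$) and the last is the standard square-function bound in $L^q$, $1<q<\infty$. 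This gives $\|f\|_{\dot B^0_{q,2}}\le C\|f\|_{L^q}$ directly.

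The main obstacle here is not any single hard estimate but rather the bookkeeping proper to \emph{homogeneous} Besov spaces: the identity $f=\sum_{j\in\Z}\dot\Delta_j f$ holds only modulo polynomials, the pairing $\langle\dot\Delta_j u,\dot{\tilde\Delta}_j g\rangle$ must be justified (finite overlap of supports in frequency, so the reorganization of $\langle u,g\rangle$ into $\sum_j$ is legitimate for $u,g$ in suitable dense subclasses such as $\mathcal S_0$ or $L^2\cap L^p$, then extended by density), and the low-frequency behaviour must be controlled so that the final bound genuinely sees the homogeneous norm. Since the paper only uses this proposition through Theorem~2.40 of~\cite{B-C-D}, I would in fact simply cite that reference for the precise statement and give only the short duality/Minkowski sketch above as the reason why it holds, rather than redoing the full homogeneous Littlewood--Paley calculus.
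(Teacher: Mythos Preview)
The paper does not actually prove this proposition: it merely recalls the statement and refers the reader to Theorem~2.40 of \cite{B-C-D}. Your sketch is correct and, in fact, more informative than what the paper provides; you also correctly anticipated this in your closing paragraph. One small reorganization that would make the argument cleaner: prove the embedding $L^{p'}\hookrightarrow\dot B^0_{p',2}$ first (Minkowski for $p'\leq2$ plus the square-function bound), and then obtain $\dot B^0_{p,2}\hookrightarrow L^p$ by duality, since your duality argument for the first embedding implicitly uses the second one anyway.
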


\section{Appendix -- Some results from measure theory} \label{app:measure}

We collect here some definitions and results from measure theory. If not otherwise specified, we refer to book \cite{Bog} for a complete treatement of the subject.

First of all, we give the definition of \emph{weak convergence} of measures (see Chapter 8 of \cite{Bog}).
\begin{defin} \label{d:measure-weak}
Let $X$ be a topological space, and let us endow $X$ with the Baire $\sigma$-algebra $\mbb Ba(X)$. Let us denote by $\mc C_b(X)$ the set of all bounded real-valued continuous functions
over $X$. \\
Let $\bigl(\mu_{\eta}\bigr)_{\eta}$ be a one-parameter family of positive measures on a measurable space $\bigl(X,\mbb Ba(X)\bigr)$. 
We say that it converges \emph{weakly} to some Baire measure $\mu$, and we write $\mu_\eta\Rightarrow\mu$, if one has
$$
\lim_{\eta\ra0}\int_Xf\,d\mu_\eta\,=\,\int_Xf\,d\mu
$$
for every $f\in\mc C_b(X)$.
\end{defin}

\begin{rem} \label{r:ba-bo}
We recall that, if $X$ is a metric space, the set $\mc M_{\mbb Ba}(X)$ of Baire measures and the set $\mc M_{\mbb Bo}(X)$ of Borel measures on $X$ coincide.

Moreover, by Theorem 7.1.7 of \cite{Bog}, if $X$ is complete and separable, every Borel measure is a Radon measure. This is the case, for instance, if $X=\R$.
\end{rem}

We need also some auxiliary results. The first one (see Theorem 4.7.25 of \cite{Bog}) provides us with a useful characterization of
compactness in the topology of setwise convergence. We have not used this statement in our proof, but we include it for the sake of completeness.
\begin{prop} \label{p:Bog_setwise}
Let $\mc{M}(X,\Sigma)$ be the family of all bounded countably additive measures on a measurable space $(X,\Sigma)$.
For every set $\mc F\subset\mc{M}(X,\Sigma)$, the following conditions are equivalent.
\begin{enumerate}[(i)]
\item $\mc F$ has compact closure in the topology of the setwise convergence.
\item $\mc F$ has compact closure in the topology of convergence on every bounded $\Sigma$-measurable function.
\item $\mc F$ is bounded in the norm of the total variation, and there exists a non-negative measure $\nu\in\mc{M}(X,\Sigma)$ such that
the family $\mc F$ is uniformly $\nu$-continuous: for every $\veps>0$, there exists a $\delta>0$ such that, for all $A\in\Sigma$ verifying $\nu(A)\leq\delta$, one has
$|\mu(A)|\leq\veps$ for all $\mu\in\mc F$. In this case, all measures in $\mc F$ are absolutely continuous with respect to $\nu$ and the family $\bigl\{d\mu/d\nu\,\bigl|\,\mu\in\mc F\bigr\}$
of densities is compact in the weak topology of $L^1(\nu)$.
\end{enumerate}
\end{prop}

The second result (see Exercice 4.7.132 at page 321-322 of \cite{Bog}) allows us to prove convergence when considering both families of measures and measurable functions depending
on some parameter.
\begin{prop} \label{p:Areskin}
Let $(X,\Sigma)$ be a measurable space, and let $\bigl(\mu_n\bigr)_n$ be a sequence of bounded countably additive positive measures on $X$.
Finally, let $\bigl(f_n\bigr)_n$ a sequence of $\Sigma$-measurable functions. Assume that the $\mu_n$'s converge to some measure $\mu$ in the sense of setwise convergence.

\begin{enumerate}[A)]
 \item If the functions $f_n$ converge $\mu$-almost everywhere to some $\Sigma$-measurable function $f$, then, for all $\theta>0$, one has
\begin{equation} \label{eq:conv_meas}
\lim_{n\ra+\infty}\mu_n\left\{x\in X\;\bigl|\quad|f_n(x)\,-\,f(x)|\,\geq\,\theta\right\}\,=\,0\,.
\end{equation}
\item Suppose that, for all $\theta>0$ fixed, property \eqref{eq:conv_meas} holds true and that the functions $f_n$ are uniformly bounded. Then
$$
\lim_{n\ra+\infty}\int_Xf_n\,d\mu_n\,=\,\int_Xf\,d_\mu\,.
$$
\end{enumerate}
\end{prop}

Now, we present further auxiliary results, which were needed in the proof of Theorem \ref{th:RAGE_eps}. We start by quoting Theorem 8.10.56 of \cite{Bog}.
\begin{prop} \label{p:base}
Let $X$ be a Hausdorff space and let $\mc U_0$ be a basis for its topology. Assume that $\mc U_0$ is stable for countable unions.

Let a sequence $\bigl(\mu_n\bigr)_n$ of Radon measures converge on every set $U\in\mc U_0$. Then $\bigl(\mu_n\bigr)_n$ converges on every Borel set.
\end{prop}

We also mention Theorem 8.2.3 of \cite{Bog}.
\begin{prop} \label{p:weak-open}
Let $X$ be a topological space; let $\bigl(\mu_\alpha\bigr)_\alpha\,\subset\,\mc{M}_{\mbb Ba}(X)$ be a family of positive Baire measures, and $\mu\in\mc M_{\mbb Ba}(X)$ be a given positive Baire
measure.

Then $\mu_\alpha\Rightarrow\mu$ if and only if $\;\;\lim_\alpha\mu_\alpha(X)\,=\,\mu(X)\;$ and one of the following two conditions holds true:
\begin{itemize}
 \item for every functionally closed set $F$, one has
$\;\limsup_\alpha\mu_\alpha(F)\,\leq\,\mu(F)$;
 \item for every functionally open set $U$, one has
$\;\liminf_\alpha\mu_\alpha(U)\,\geq\,\mu(U)$.
\end{itemize}

\end{prop}

From the previous statement, we immediately deduce the next result, which should be compared with Theorem 8.2.7 of \cite{Bog} (given for probability measures).
\begin{coroll} \label{c:weak-open}
Let $X$ be a topological space; let $\bigl(\mu_\alpha\bigr)_\alpha\,\subset\,\mc{M}_{\mbb Ba}(X)$ be a family of positive Baire measures, and $\mu\in\mc M_{\mbb Ba}(X)$ be a given positive Baire
measure.

Then $\mu_\alpha\Rightarrow\mu$ if and only if $\;\lim_\alpha\mu_\alpha(X)\,=\,\mu(X)$ and $\;\lim_\alpha\mu_\alpha(E)\,=\,\mu(E)\;$ for all set $E\in\mbb Ba(X)$ satisfying the following property:
there exist a functionally open set $U$ and a functionally closed set $F$ such that $U\subset E\subset F$ and $\mu(F\setminus U)=0$.
\end{coroll}

{\small

}

\end{document}